\theoremstyle{definition}
\newtheorem{definition}{Definition}[section]
\smallskip\end{definition}}
\theoremstyle{plain}
\newtheorem{theorem}{Theorem}[section]
\newtheorem{lemma}[theorem]{Lemma}
\newtheorem{claim}[theorem]{Claim}
\theoremstyle{definition}
\newtheorem{axiom}{Axiom}[section]
\newtheorem{proposition}{Proposition}[section]
\smallskip\end{theorem}}
\smallskip\end{lemma}}
\smallskip\end{claim}}
\smallskip\end{proof}}
\title[Double Categorical AQFT I]{Double Categorical Approaches to AQFT I: Axiomatic Setup}
\author{Khyathi Komalan}
\address{Department of Physics, Mathematics and Astronomy, California Institute of Technology}
\email{kkomalan@caltech.edu}
\begin{document}

\begin{abstract}
In operator-algebraic AQFT one routinely moves back and forth between two kinds of structure: inclusions of local algebras coming from inclusions of regions, and bimodules/intertwiners that implement the standard $L^2$-based constructions used to compare and compose observables. The obstruction to making this interplay genuinely functorial is that there are two independent compositions (restriction along inclusions and fusion/transport along bimodules) and they must be compatible on commuting spacetime diagrams, which is exactly the situation a double category is designed to encode. Part I resolves this by building a spacetime double category and a von Neumann algebra double category inspired by previous work by Orendain, and by packaging an AQFT input as a pseudo double functor whose vertical part is the Haag-Kastler net and whose squares record the required compatibilities in a well-typed way forced by commutativity. We formulate the Haag-Kastler axioms in this setup, establish the coherence needed for the construction, and work out representative examples.
\end{abstract}

\maketitle

\tableofcontents

\section{Introduction}

\textbf{Context.} Dr. Skeptical, a respected physicist with a habit of questioning formalism, runs two laboratories in different locations. Each laboratory runs its own experiment, is switched on only for a finite duration, and records a definite outcome. The laboratories are positioned far enough apart that, during the experiment, no physical influence can travel from one laboratory to the other.

Although the laboratories are physically separated, the experiments are not independent in design. A single source prepares a correlated quantum system and sends one component to each laboratory. Each laboratory performs a fixed measurement on the system it receives. When Dr. Skeptical examines the data from either laboratory in isolation, the results appear unremarkable, as the outcomes seem random.

She then assigns one assistant to each laboratory. Each assistant is instructed to analyse only the measurements performed locally and to describe the experiment using whatever theoretical tools they find most appropriate. When questioned separately, Dr. Skeptical finds that the assistants give complete and internally consistent accounts of their findings, from the observables available in the laboratory to the outcomes that occurred. Both assistants agree that the existence of the other laboratory did not affect their results.

However, when Dr. Skeptical pieces together the information provided by both assistants, she is surprised. Strong correlations emerge between the outcomes. While the assistants agree on essentially all local facts, they disagree on how to formalise the experiment. Dr. Skeptical is especially frustrated that each description relies on global structures that are not accessible from within a single laboratory.

Unhappy with this state of affairs, Dr. Skeptical makes a more drastic move. She replaces her assistants with two identical copies of herself (the feasibility of cloning is not the subject of this paper), stationing one at each laboratory and restricting each to analysing only the observables measurable in that region. The result, however, is unchanged. Each copy produces a complete and coherent description of the local experiment, yet there is still no preferred or observer-independent way to combine these descriptions into a single account that respects both the quantum correlations and the causal separation of the laboratories.

At this point, Dr. Skeptical realises that the issue is not the competence of her assistants, or even her own, but rather pertains to the nature of quantum theory. Despite experiments being conducted in causally separated regions, standard formulations of quantum theory tend to place all observables within a single global structure. In non-interacting theories this global viewpoint may be harmless. However, in the presence of interactions or entanglement, particle interpretations become ambiguous, different choices of fields or Lagrangians can describe the same observable behaviour, and the physical content of the theory is no longer tied to any particular microscopic description.

What remains stable across all these descriptions is not the choice of fields nor the global state, but the structure of measurements available in each region and the relations between them. In Dr. Skeptical’s situation, a formulation of quantum field theory built from region-based collections of observables, together with principles governing their inclusion, independence, and mutual consistency, would resolve the difficulty.

The resolution of Dr. Skeptical’s predicament is provided by Algebraic Quantum Field Theory (AQFT), which reformulates quantum field theory in terms of observables localized in spacetime regions rather than globally defined fields or states. The general idea is to encode all physically meaningful information directly at the level of local measurements and their relations, independent of the specific field-theoretic representation chosen \cite{buchholz2024algebraicquantumfieldtheory}.

More concretely, in AQFT, one associates to each bounded region $O$ of Minkowski spacetime an algebra $\mathcal{A}(O)$ of observables localized in that region. The assignment $O \mapsto \mathcal{A}(O)$ is organized as a net over spacetime. If $O_{1}\subset O_{2}$, then observables localized in $O_{1}$ are included in those localized in $O_{2}$, expressing a property known as isotony. Relativistic locality is encoded algebraically by requiring that observables associated with spacelike separated regions commute. In this way, locality is built directly into the observable structure, while correlations between distant regions arise through states rather than nonlocal observables \cite{haag2012local}.

A natural consequence of this setup is that no preferred global description is required. Distinct choices of fields or Lagrangians may generate the same net of local observable algebras and are therefore regarded as physically equivalent. From a more mathematical perspective, one may work with a net of local algebras satisfying a collection of physically motivated principles — known as the Haag-Kastler axioms — rather than with any particular calculational scheme \cite{halvorson2006algebraicquantumfieldtheory}.

Where does category theory enter this picture? Categorically, an AQFT may be described as a covariant functor from a category of spacetime regions — typically open, bounded regions of Minkowski spacetime with inclusions as morphisms — into a category of operator algebras. In this formulation, the net structure $O \mapsto \mathcal{A}(O)$ is encoded functorially, while the Haag-Kastler axioms are imposed as additional conditions on this assignment, specifying which functors are physically admissible \cite{halvorson2006algebraicquantumfieldtheory, haag2012local}.

While this $1$-functorial viewpoint provides a clear framework, it does not exhaust the structural content of an AQFT. In particular, locality, covariance, and time-slice properties are enforced by axioms constraining the functor rather than being determined by functoriality itself, and equivalences between quantum field theories are not intrinsic to the categorical structure. As emphasized in recent work, this limits the ability of the $1$-categorical formulation to encode local-to-global behavior, causal compatibility, and homotopical notions of equivalence in a structural way, motivating the consideration of richer categorical frameworks \cite{carmona2023newmodelstructuresalgebraic}.

\textbf{Double Categories.} Double categories appear in the work of Charles Ehresmann in the early 1960s, in the context of his investigations into structured and internal categories \cite{ehresmann1963categoriesdoubles}\cite{ehresmann1963categoriesstructurees}. In these works, Ehresmann introduces double categories as instances of structured categories equipped with composition laws, allowing morphisms to be composed in two distinct directions. The basic data of a double category involves objects, two classes of morphisms (horizontal and vertical), and squares encoding their mutual compatibility.

Subsequent work developed Ehresmann's framework by clarifying the internal structure and categorical properties of double categories. Authors focused on lifting constructions familiar from $1$-category theory to non-trivial double categorical formulations that don't involve trivialising either horizontal or vertical morphisms. Notable among these works is a series of papers by Marco Grandis and Robert Paré formalizing limits, adjoints, and Kan extensions \cite{grandis1999limitsdoublecategories,
      grandis2004adjointsdoublecategories,
      grandis2008kanextensionsdoublecategories}.

Other formulations of double categories have been considered, in regard to the strictness with which composition and unit laws are imposed. In some settings it is more natural for these laws to hold strictly in both directions, while in others, some more generalisation is required, where these properties are only present up to isomorphism. For this reason, notions such as strict, weak, and pseudo double categories have been introduced.

We recall the definition of a double category \cite{nlab:double_category}. (We begin with the strict/internal definition for concreteness; the pseudo variant used in the construction is recalled in Section~\ref{sec:double}.)

\begin{definition}[Double Category]
A double category $\mathbb{D}$ consists of two categories $\mathbb{D}_{0}$ and $\mathbb{D}_{1}$, together with source and target functors
$$
s,t:\mathbb{D}_{1}\rightrightarrows \mathbb{D}_{0}.
$$
The objects of $\mathbb{D}_{0}$ are the objects of $\mathbb{D}$, and its morphisms are the vertical morphisms.
The objects of $\mathbb{D}_{1}$ are the horizontal morphisms, while its morphisms are the $2$-cells (or squares).

Pictorially, this looks like:
\[
\begin{tikzcd}
  {X_0} & {X_1} \\
  {Y_0} & {Y_1}
  \arrow[""{name=0, anchor=center, inner sep=0}, "{h_0}", from=1-1, to=1-2]
  \arrow["{v_0}"', from=1-1, to=2-1]
  \arrow["{v_1}", from=1-2, to=2-2]
  \arrow[""{name=1, anchor=center, inner sep=0}, "{h_1}"', from=2-1, to=2-2]
  \arrow["\phi", Rightarrow, from=0, to=1]
\end{tikzcd}
\]
for objects $X_{i},Y_{i}$, horizontal morphisms $h_{i}$, vertical morphisms $v_{i}$, and a $2$-cell $\phi$.

In addition, there is an identity functor
$$
i:\mathbb{D}_{0}\to \mathbb{D}_{1},
$$
and a horizontal composition functor
$$
\circ_{h}:\mathbb{D}_{1}\times_{\mathbb{D}_{0}}\mathbb{D}_{1}\to \mathbb{D}_{1},
$$
such that these data define a category internal to $\mathbf{Cat}$.

In this spirit, a double functor $\mathbb{F}:\mathbb{D}\to \mathbb{E}$ between double categories
consists of functorial data mapping objects, vertical morphisms, horizontal morphisms,
and $2$-cells of $\mathbb{D}$ to those of $\mathbb{E}$, compatibly with source and target maps,
identities, and horizontal and vertical composition \cite{myersCategoricalSystemsTheory}.
\end{definition}

As briefly mentioned earlier, AQFT admits a $1$-categorical formulation, given by a functor from a category of spacetime regions with inclusions to a category of operator algebras.
We return to this formulation in Section~\ref{sec:standard}, to explore it in more detail. However, the $1$-categorical formulation deals with only inclusions of regions and the induced algebra homomorphisms. Other physically relevant relations — such as causal compatibility between regions or correlation data relating distinct regions — are not represented as morphisms, and their interaction with inclusions is imposed externally via axioms.

To get an intuitive understanding of the idea behind our double categorical AQFT construction, let us go back to the Dr. Skeptical anecdote.

Let $U_{0}\subset U_{1}$ be regions in laboratory one and $V_{0}\subset V_{1}$ be regions in laboratory two. Consider a square in which the vertical arrows are inclusions within each laboratory, while the horizontal arrows record the relevant ``between-laboratories'' spacetime data:
\[
\begin{tikzcd}
  {U_0} & {V_0} \\
  {U_1} & {V_1}
  \arrow[""{name=top, anchor=center, inner sep=0}, "{\text{causal map}}", from=1-1, to=1-2]
  \arrow["{\text{inclusion}}"', from=1-1, to=2-1]
  \arrow["{\text{inclusion}}", from=1-2, to=2-2]
  \arrow[""{name=bot, anchor=center, inner sep=0}, "{\text{causal map}}"', from=2-1, to=2-2]
  \arrow["\phi", Rightarrow, from=top, to=bot]
\end{tikzcd}
\]
Here, $\phi$ represents compatibility between passing to larger regions (vertically) and the chosen spacetime relation between the laboratories (horizontally). In the $1$-categorical setup, the domain category representing spacetime has only inclusions as morphisms, whereas a double categorical perspective allows one to track, simultaneously, inclusions and a second class of physically meaningful morphisms together with their compatibilities.

Similarly, let us think about the corresponding square of observable data. Since we are only sketching the idea here, with a more rigorous treatment deferred to Section~\ref{sec:double}, we denote the algebras associated to regions in the first laboratory by $\mathcal{A}(U_{i})$ and those associated to regions in the second laboratory by $\mathcal{A}(V_{i})$:
\[
\begin{tikzcd}
  {\mathcal{A}(U_0)} & {\mathcal{A}(V_0)} \\
  {\mathcal{A}(U_1)} & {\mathcal{A}(V_1)}
  \arrow[""{name=top, anchor=center, inner sep=0}, "{h_0}", from=1-1, to=1-2]
  \arrow["{v_0}"', from=1-1, to=2-1]
  \arrow["{v_1}", from=1-2, to=2-2]
  \arrow[""{name=bot, anchor=center, inner sep=0}, "{h_1}"', from=2-1, to=2-2]
  \arrow["\phi", Rightarrow, from=top, to=bot]
\end{tikzcd}
\]
We concretely define what $h_i$ and $v_i$ are later, but their roles can already be explained intuitively.
The vertical morphisms $v_i$ are the $\ast$-homomorphisms induced by inclusions of spacetime regions within each laboratory, encoding isotony: observables localized in a smaller region embed into those localized in a larger one. These are precisely the algebraic maps that appear in the standard AQFT formulation.

The horizontal morphisms $h_i$, by contrast, do not arise from inclusions. Instead, they represent Hilbert bimodules relating the algebras associated to regions in different laboratories. Physically, these bimodules encode correlation data or shared state-space structure between the two laboratories without identifying their observables. The square $\phi$ then expresses the compatibility between isotony within each laboratory and relating the two laboratories via such bimodules.

In our setup, we define AQFT to be a double functor between the double category of spacetime and the double category of observables.

In Dr. Skeptical’s situation, the square above formalizes the fact that local descriptions within each laboratory remain complete and consistent, while correlations between laboratories persist and must be accounted for in a way that respects both inclusion and causal separation. This compatibility is not visible in the $1$-categorical formulation, but becomes explicit in the double categorical setting.

\textbf{Scope.} The aim of this paper is to formulate algebraic quantum field theory within a double categorical framework that separates inclusions of spacetime regions from correlation data relating observables. We construct a double category of spacetime regions and a corresponding double category of operator-algebraic data, and define a double functor between them that refines the standard $1$-categorical formulation of AQFT.

Concretely, we construct double categories $\mathbf{Mink}(M)$ and $\mathbf{vNA}$ suited to AQFT, and we define the central object of the paper to be a double functor
$$
\mathbb{F}:\mathbf{Mink}(M)\to \mathbf{vNA}.
$$
The double category $\mathbf{Mink}(M)$ has causally convex regions as objects, inclusions as vertical morphisms, causal embeddings as horizontal morphisms, and commuting squares as $2$-cells. The double category $\mathbf{vNA}$ has von Neumann algebras as objects, normal unital $\ast$-homomorphisms as vertical morphisms, Hilbert bimodules as horizontal morphisms (composed by Connes fusion), and intertwiners as $2$-cells. The functor $\mathbb{F}$ sends a region $U$ to its algebra $\mathcal{A}(U)$, sends inclusions to the isotony embeddings, and sends horizontal spacetime data to bimodules encoding correlation or transport. Restricting $\mathbb{F}$ to the vertical direction recovers an ordinary AQFT net, while the square-level compatibility makes explicit how isotony interacts with the horizontal correspondence calculus in the target.

We show how the Haag-Kastler axioms can be expressed within this framework, distinguishing those aspects that arise from the categorical structure itself from those imposed as additional conditions. Particular emphasis is placed on making explicit the compatibility between isotony and correlation data, encoded by squares in the double category, which is not visible in the $1$-categorical setting.

This paper is devoted to the foundational and structural aspects of the double categorical formulation and to representative examples. Subsequent parts of this work develop applications of the framework: Part II studies its interaction with Type I and Type II von Neumann algebras, while Part III addresses Type III von Neumann algebras and introduces an operadic refinement of the construction, providing a natural setting for Tomita-Takesaki modular theory.

\textbf{Acknowledgements.} The author is grateful to Topos Institute Oxford for welcoming them as an informal research affiliate and for providing a temporary workspace where these ideas were developed, as well as for the opportunity to present related work in the Topos Oxford Seminar.

\textbf{Funding.} This work was supported by a generous grant from Emergent Ventures.

\section{Categorical Treatments of AQFT}\label{sec:standard}

This section is a quick tour of other categorical approaches to AQFT. Nearly all of them are $1$-categorical at the basic level: one indexes observables by a category of regions or spacetimes, and then imposes the physical content by extra conditions.

\textbf{Local nets on a fixed spacetime.} Fix a spacetime background $M$. The algebraic approach starts by choosing a class of admissible regions (for instance, bounded open sets satisfying suitable causal regularity conditions) and building them into a category. In the simplest setup this is a poset $\mathcal{R}(M)$, with morphisms given by inclusions. A (Haag-Kastler) net on $M$ can then be viewed as a covariant functor
$$
\mathcal{A}:\mathcal{R}(M)\to \text{Alg},
$$
where $\text{Alg}$ is a category of operator-algebraic objects (e.g. $*$-algebras, $C^*$-algebras, or von Neumann algebras) with structure-preserving morphisms. In practice one often restricts to injective (or monic) morphisms, so that for an inclusion $O\subseteq O'$ the map $\mathcal{A}(O)\to \mathcal{A}(O')$ identifies $\mathcal{A}(O)$ with a subalgebra of $\mathcal{A}(O')$ via its image. Conceptually, the net records (i) which observables are available in each region and (ii) how these collections relate when one enlarges the region.

The main physical constraints are imposed as conditions on the functor $\mathcal{A}$. These are the Haag-Kastler axioms: isotony, locality, and covariance, often supplemented by additivity, and on globally hyperbolic spacetimes a time-slice condition expressing propagation from Cauchy data \cite{HaagKastler1964}. Categorically, isotony is simply functoriality along inclusions, together with the requirement that the corresponding maps are injective when one wants to regard $\mathcal{A}(O)$ as a subalgebra of $\mathcal{A}(O')$. Locality requires that observables assigned to spacelike separated regions commute inside the algebra of any larger region containing them. Covariance expresses compatibility with the relevant spacetime symmetries (or, in more flexible setups, with a class of admissible embeddings). Additivity packages the idea that algebras of larger regions are generated by algebras of smaller regions that cover them, in an appropriate sense. The time-slice axiom singles out regions containing a Cauchy surface and requires that their observables generate the full algebra.

It is often helpful to keep the typing explicit by separating the region category $\mathcal{R}(M)$ from the target $\text{Alg}$. For example, isotony may be displayed by applying $\mathcal{A}$ to an inclusion chain in $\mathcal{R}(M)$:
\[
\begin{tikzcd}
	{O_{1}} & \subseteq & {O_{2}}  & \subseteq & {O_{3}} \\
	{\mathcal{A}(O_{1})} && {\mathcal{A}(O_{2})} && {\mathcal{A}(O_{3})}
	\arrow["{\mathcal{A}}", from=1-1, to=2-1]
	\arrow["{\mathcal{A}}", from=1-3, to=2-3]
	\arrow["{\mathcal{A}}", from=1-5, to=2-5]
	\arrow[from=2-1, to=2-3]
	\arrow[from=2-3, to=2-5]
\end{tikzcd}
\]
The constraint undetermined by functoriality alone is locality: if $O_{1}$ and $O_{2}$ are spacelike separated subregions of a larger region $O$, then the subalgebras $\mathcal{A}(O_{1})$ and $\mathcal{A}(O_{2})$ must commute inside $\mathcal{A}(O)$. In the fixed-background ``net on $M$'' formulation, this is typically imposed as an additional axiom describing how two subalgebras sit inside a common algebra, rather than something forced by functoriality alone.

Seen this way, the standard setup already hints at richer structure. Causal compatibility is not a morphism in the region poset; it is extra data about pairs of inclusions into a common target. Likewise, local-to-global principles (additivity, time-slice, and variants) are not consequences of the definition of a functor $\mathcal{A}$; they are further constraints governing how $\mathcal{A}$ behaves on certain families of inclusions (covers, unions, or distinguished embeddings).

\textbf{The locally covariant perspective.} The locally covariant framework replaces a single background spacetime by a category $\text{Loc}$ of spacetimes and admissible embeddings. In the formulation of Brunetti, Fredenhagen and Verch, the objects of $\text{Loc}$ are oriented and time-oriented globally hyperbolic Lorentzian manifolds, and the morphisms are isometric embeddings preserving the orientations and having causally convex open image \cite{Brunetti_2003}. Causal convexity ensures that an embedding can be read as inclusion of a subsystem: causal relations seen in the image are exactly those inherited from the domain.

A locally covariant quantum field theory is then a covariant functor
$$
\mathcal{A}:\text{Loc}\to \text{Alg},
$$
subject to additional physically motivated axioms stated as properties of $\mathcal{A}$ \cite{Brunetti_2003, fewster2015algebraicquantumfieldtheory}. The point is that covariance is built into the indexing geometry. Rather than fixing $M$ and adding symmetry actions by hand, one lets functoriality express how observables are transported along spacetime embeddings.

Functoriality is the ``kinematic'' part of the theory: it encodes consistent transport of observables under admissible embeddings, before imposing causality, time-slice, or other physical constraints. Concretely, an embedding $\psi:M\to N$ identifies $M$ with a causally well-behaved subspacetime of $N$, and functoriality supplies the induced homomorphism of observables
$$
\mathcal{A}(\psi):\mathcal{A}(M)\to \mathcal{A}(N).
$$
Accordingly, if $\varphi:L\to M$ and $\psi:M\to N$ are morphisms in $\text{Loc}$, then
$$
\mathcal{A}(\psi \circ \varphi)=\mathcal{A}(\psi)\circ \mathcal{A}(\varphi),
\qquad
\mathcal{A}(\mathrm{id}_{M})=\mathrm{id}_{\mathcal{A}(M)}.
$$
To keep typing explicit, one can display composition in $\text{Loc}$ and its image in $\text{Alg}$ side by side:
\[
\begin{tikzcd}
	L & M && {\mathcal{A}(L)} & {\mathcal{A}(M)} \\
	& N &&& {\mathcal{A}(N)}
	\arrow["\varphi", from=1-1, to=1-2]
	\arrow["{\psi \circ \varphi}"', from=1-1, to=2-2]
	\arrow["\psi", from=1-2, to=2-2]
	\arrow["{\mathcal{A}(\varphi)}", from=1-4, to=1-5]
	\arrow["{\mathcal{A}(\psi\circ \varphi)}"', from=1-4, to=2-5]
	\arrow["{\mathcal{A}(\psi)}", from=1-5, to=2-5]
\end{tikzcd}
\]

How does one recover a net on a fixed spacetime $M$ from $\mathcal{A}:\text{Loc}\to\text{Alg}$? Fix $M\in \text{Loc}$ and choose a class of subregions $O\subseteq M$ (for instance, open, relatively compact, causally convex regions) for which the restriction $M|_{O}$ is globally hyperbolic so ($M|_{O}\in \text{Loc}$) together with the induced inclusion $\iota_{O}:M|_{O}\to M$. Restricting along these inclusions yields a net
$$
O \longmapsto \mathcal{A}(M|_{O}),
\qquad
O\subseteq O' \longmapsto \mathcal{A}(\iota_{O'O}):\mathcal{A}(M|_{O})\to \mathcal{A}(M|_{O'}),
$$
where $\iota_{O'O}:M|_{O}\to M|_{O'}$ is the morphism in $\text{Loc}$ induced by the inclusion $O\subseteq O'$. In particular, the structure of isotony is provided by functoriality; when one works in a target category whose morphisms are injective, isotony is built into the basic typing.

Locality and time-slice enter as additional requirements. Causality constrains pairs of morphisms with common codomain: if $\varphi_{1}:M_{1}\to N$ and $\varphi_{2}:M_{2}\to N$ have causally disjoint images, then the subalgebras $\mathcal{A}(\varphi_{1})(\mathcal{A}(M_{1}))$ and $\mathcal{A}(\varphi_{2})(\mathcal{A}(M_{2}))$ commute inside $\mathcal{A}(N)$. Time-slice is imposed by selecting a class $S$ of Cauchy morphisms and requiring $\mathcal{A}(s)$ to be an isomorphism for every $s\in S$ \cite{Brunetti_2003, fewster2015algebraicquantumfieldtheory}. In this way, the locally covariant framework cleanly separates the covariant functorial assignment from the genuinely physical constraints.

A further refinement is to treat fields and comparisons between theories via natural transformations. In the locally covariant setting, a map between two theories is a natural transformation between the corresponding functors $\text{Loc}\to\text{Alg}$, as emphasized in Fewster and Verch's account of the category of locally covariant theories \cite{fewster2015algebraicquantumfieldtheory}. This also makes clear what the basic $1$-categorical setup does not capture: in many situations the right notion of ``sameness'' of theories is weaker than equality and is better expressed in terms of equivalences together with coherent comparison data, motivating higher-categorical and homotopical refinements.

\textbf{Orthogonal categories.} A recurring theme in categorical AQFT is that the key constraints are not about single embeddings in isolation, but about pairs and families of embeddings that are compatible in a causal sense. The notion of an orthogonal category packages this neatly.

An orthogonal category $(C,\perp)$ is a category $C$ together with an orthogonality relation on pairs of morphisms with a common codomain, required to be stable under pre- and post-composition. The basic example takes $C=\text{Loc}$ and declares two morphisms orthogonal when their images are causally disjoint. There is also a fixed-spacetime version: for $M\in\text{Loc}$, the slice category $\text{Loc}/M$ inherits an induced orthogonality relation and can be used to model theories akin to Haag--Kastler nets on the single spacetime $M$ \cite{Benini_2021}.

In this approach, locality is built into the indexing data. One works with $(C,\perp)$ and a functor $\mathcal{A}:C\to \text{Alg}_{k}$, where $\text{Alg}_{k}$ is a chosen symmetric monoidal category of $k-$linear observables. The relation $f_{1}\perp f_{2}$ is defined for pairs of morphisms $f_{1}:c_{1}\to t$ and $f_{2}:c_{2}\to t$ with common codomain $t$. Locality is then the requirement that whenever $f_{1}\perp f_{2}$, the subalgebras $\mathcal{A}(f_{1})(\mathcal{A}(c_{1}))$ and $\mathcal{A}(f_{2})(\mathcal{A}(c_{2}))$ commute inside $\mathcal{A}(t)$. Equivalently, for all $a_{1}\in \mathcal{A}(c_{1})$ and $a_{2}\in \mathcal{A}(c_{2})$,
$$
[\mathcal{A}(f_{1})(a_{1}),\, \mathcal{A}(f_{2})(a_{2})]=0
\hspace{0.2cm}\text{in}\hspace{0.1cm}\mathcal{A}(t).
$$
This is the usual locality axiom, now stated so that the causal input appears as part of the domain structure through $\perp$.

This viewpoint interfaces directly with operads, which encode multi-input composition. From $(C,\perp)$ one constructs a colored operad $P_{C}$ whose operations are finite tuples of pairwise orthogonal morphisms $(f_{1},\ldots,f_{n})$ with a common target. Algebras over $P_{C}$ give the standard operadic presentation of theories equipped with the corresponding multi-region structure maps, compatible with orthogonality. In the basic geometric example $C=\text{Open}(M)$ with $U\perp V$ when $U\cap V=\varnothing$, one recovers the prefactorization operad from factorization algebra theory \cite{Benini_2021}.

In this formulation, an AQFT can be presented as an algebra over $P_{C}$ in a chosen symmetric monoidal target. In that presentation, the $\perp$-commutativity requirement is built into the operadic relations rather than imposed separately. This operadic reformulation is especially useful in homotopical and derived treatments of AQFT, where model structures on categories of operadic algebras provide a systematic language for weak equivalences of theories and for derived constructions \cite{carmona2023newmodelstructuresalgebraic}. The payoff is that locality is no longer an extra axiom attached to a $1$-functor; it becomes part of the algebraic data of the theory and behaves well under standard categorical operations, including time-slice localizations, extension along embeddings of orthogonal categories, and homotopy-invariant replacements \cite{carmona2023newmodelstructuresalgebraic}\cite{Benini_2021}.

\textbf{Kan extensions.} Two different Kan extension constructions show up naturally in categorical AQFT, because there are two different ways one wants to pass from local data to global data.

First, in the orthogonal--operadic setting, one often has a full embedding of orthogonal categories
$$
J: C \hookrightarrow D.
$$
Given a theory $\mathcal{A}\in \text{AQFT}(C)$, there is a canonical way to extend it to $D$: one takes the operadic left Kan extension along the induced morphism of colored operads $P_{C}\to P_{D}$. This produces a theory $J_{!}\mathcal{A}\in \text{AQFT}(D)$ and fits into an adjunction
$$
J_{!}\dashv J^{*},
$$
where $J^{*}$ is restriction along $J$. Roughly, $J_{!}\mathcal{A}$ is the universal extension of $\mathcal{A}$ to the larger indexing category $D$, obtained by freely adding precisely the composites forced by the operadic locality structure and the orthogonality relations. In familiar geometric cases, this universal extension recovers what is often called Fredenhagen's universal algebra \cite{Benini_2021}.

Second, in gauge-theoretic and structured spacetime settings, one starts from a category $\text{Str}$ fibered in groupoids over $\text{Loc}$, with projection
$$
\pi: \text{Str}\to \text{Loc}
$$
forgetting extra background structure (such as spin structures or principal bundles with connection). A theory on structured spacetimes is a functor $\mathcal{A}:\text{Str}\to \text{Alg}$. One then wants a theory on $\text{Loc}$ that packages, in a controlled way, how $\mathcal{A}$ depends on the additional structure. A canonical candidate is the right Kan extension
$$
\text{Ran}_{\pi}\mathcal{A}:\text{Loc}\to \text{Alg},
$$
equipped with its universal natural transformation
$$
\epsilon: (\text{Ran}_{\pi}\mathcal{A})\circ \pi \Rightarrow \mathcal{A}.
$$
Diagrammatically:
\[
\begin{tikzcd}
	{\text{Str}} & {\text{Alg}} \\
	{\text{Loc}} & {\text{Alg}}
	\arrow["{\mathcal{A}}", from=1-1, to=1-2]
	\arrow["\pi"', from=1-1, to=2-1]
	\arrow["{\text{Ran}_{\pi}\mathcal{A}}"', from=2-1, to=2-2]
\end{tikzcd}
\quad
\epsilon: (\text{Ran}_{\pi}\mathcal{A})\circ \pi \Rightarrow \mathcal{A}
\]
The universal property is the usual one: $(\text{Ran}_{\pi}\mathcal{A})\circ \pi \Rightarrow \mathcal{A}$ is terminal among pairs $\mathcal{B}:\text{Loc}\to \text{Alg}$ equipped with a natural transformation $\mathcal{B}\circ \pi \Rightarrow \mathcal{A}$. In concrete models, $\text{Ran}_{\pi}\mathcal{A}(M)$ can often be computed as a limit over the groupoid of $\pi$-structures on $M$, which is the mechanism that packages the dependence on background structure into a single $\text{Loc}$-theory.

This pointwise description also explains why strict isotony can fail after forgetting structure. A morphism $f:M\to N$ in $\text{Loc}$ induces a comparison map $\text{Ran}_{\pi}\mathcal{A}(f):\text{Ran}_{\pi}\mathcal{A}(M)\to \text{Ran}_{\pi}\mathcal{A}(N)$ between two limits, taken over the structure groupoids above $M$ and $N$. Without additional extension hypotheses on $\pi$, restriction of structures along $f$ need not be well-behaved enough to make this comparison map injective. Concretely, elements that are nontrivial at $M$ can be forced to vanish when transported to $N$. This is the logic behind the isotony violations exhibited in structured and gauge-theoretic settings \cite{Benini_2017}.

Together, these Kan extension constructions formalize two complementary local-to-global procedures: extension along inclusions of indexing geometries (operadic left Kan extension), and descent from structure-dependent observables to a theory on underlying spacetimes (right Kan extension).

\textbf{2AQFT.} The operadic reformulation already suggests a natural categorification. In the algebra-valued setting, a region is assigned an algebra, and compatible families of regions are combined using multiplication maps. In many contexts, especially with symmetry and descent, the structure one wants to assign to a region is not a single algebra but a $k$-linear category of objects, with restriction and combination implemented by functors. This leads to $2$-categorical algebraic quantum field theory.

Fix an orthogonal category $(C,\perp)$ and its associated prefactorization operad $P_{C}$. A $2$-categorical AQFT on $C$ keeps the same operad $P_{C}$ but changes the target from algebras to the $2$-category $\text{Pr}_{k}$ of locally presentable $k$-linear categories. Concretely, one sets
$$
\text{2AQFT}(C):=\text{Alg}_{P_{C}}(\text{Pr}_{k}).
$$
Thus a theory $\mathcal{A}\in \text{2AQFT}(C)$ assigns to each object $c\in C$ a locally presentable $k$-linear category $\mathcal{A}(c)$. For each finite tuple of pairwise orthogonal morphisms with common codomain
$$
f_{1}:c_{1}\to t,\ldots,f_{n}:c_{n}\to t,
$$
it assigns a factorization product functor
$$
\mathcal{A}(f_1,\dots,f_n)\;:\;\prod_{i=1}^n \mathcal{A}(c_i)\longrightarrow \mathcal{A}(t),
$$
which is $k$-linear and cocontinuous in each variable. The nullary operation supplies a distinguished object of $\mathcal{A}(t)$, playing the role of a unit. In this way, the same multi-region locality encoded by $P_C$ acts by multilinear functors between categories, rather than by multiplications in algebras \cite{Benini_2021}.

Two structural points are worth emphasizing. First, $\mathrm{Pr}_k$ is symmetric monoidal via a tensor product, so multi-input operations are implemented directly in the target. Second, higher comparison data is intrinsic: morphisms of theories are functors between the assigned $k$-linear categories, and $2$-morphisms are natural transformations. This is well suited to situations where one expects identifications only up to equivalence, together with coherent comparison data, as is typical in the presence of gauge symmetry or descent \cite{Benini_2021}.

\textbf{Homotopy.} The orthogonal--operadic presentation of AQFT admits a natural homotopical refinement. If one chooses a symmetric monoidal model category of observables (such as chain complexes), then an AQFT can be expressed as an algebra over $P_C$ in that model category. One can then put a model structure on the resulting category of theories, often with weak equivalences detected objectwise (or by criteria tailored to the application). The guiding principle is that derived constructions (derived quantization, derived extensions, derived local-to-global procedures) should depend only on the weak-equivalence class of the theory, not on a particular strict presentation \cite{carmona2023newmodelstructuresalgebraic}.

A key point is that the time-slice condition can be expressed as a localization of the indexing geometry. Fix a distinguished class $S$ of morphisms in an orthogonal category $C$, for instance the Cauchy morphisms. Form the localized category $C[S^{-1}]$. One can then compare time-slice theories with operadic algebras over the corresponding localized operad. In this formulation, time-slice is not an extra axiom attached to a functor; it is the statement that the theory factors through a category in which the morphisms in $S$ have been inverted. This fits well with homotopical methods: localizations and Quillen equivalences give principled ways to compare different models presenting the same physical content \cite{carmona2023newmodelstructuresalgebraic}.

Even when one ultimately works with operator algebras rather than chain complexes, the conceptual message remains. Orthogonal categories and their operads isolate the multi-region causal input and make it available to standard categorical constructions. The homotopical viewpoint supplies a notion of equivalence that is stable under those constructions. Together they explain why a strictly $1$-categorical formulation can miss structure: many comparisons in field theory are naturally formulated only up to equivalence with coherent compatibility data, and many local-to-global procedures are governed by universal properties (Kan extensions, localizations) that become more transparent once higher-categorical features are allowed to appear.

\textbf{Toward Double Categories.} The viewpoints above can be read as a steady shift in what we ask the formalism to carry: instead of treating the main physical requirements as side conditions on a bare assignment of observables, we keep enriching the categorical setup until those requirements begin to look like structure.

In the fixed-spacetime net picture and in locally covariant AQFT, the basic move is still a $1$-functor assigning observables to regions or spacetimes. Locality and time-slice then enter as extra conditions on that functor, formulated using a causal disjointness relation on pairs of embeddings into a common target and a chosen class of Cauchy morphisms required to map to isomorphisms \cite{Brunetti_2003, fewster2015algebraicquantumfieldtheory}. Orthogonal categories and their operadic encoding sharpen this point: the causal input is recorded directly in the domain via $\perp$, and locality is enforced by the operadic structure, so that observables from pairwise $\perp$-compatible regions commute when they are combined inside a common target \cite{carmona2023newmodelstructuresalgebraic, Benini_2021}.

Kan extensions add a second guiding principle. They provide canonical ways to extend a theory along inclusions and to descend from structured to unstructured spacetimes, and they make clear why strict properties such as isotony can break down once invariance and descent are built in \cite{Benini_2017}.

Finally, $2$AQFT upgrades the target from algebras to linear categories, so that comparisons between theories and the coherence data they carry are part of the basic language, and universal extension procedures admit categorified refinements \cite{Benini_2021}.

What still remains, even after these upgrades, is a clean way to keep two different kinds of structure separate while letting them interact. AQFT uses inclusions of regions to express restriction and extension of observables, and these are naturally encoded as morphisms in a region category or in $\text{Loc}$. At the same time, AQFT repeatedly invokes relations that are not inclusions, such as causal independence and more general correlation data. In the $1$-categorical setting these typically appear as commutation requirements inside a larger algebra, and in operator-algebraic practice they are often mediated by bimodules or couplings that relate algebras without identifying them. Orthogonality captures an important part of this second story, but only as a relation on pairs of arrows with a common codomain. Likewise, $2$AQFT enriches the target, but its domain remains a $1$-category equipped with $\perp$, so non-inclusion relations still enter indirectly through commutativity constraints, extension procedures, and descent.

This motivates a framework in which inclusions and correlation-type relations form two compositional directions, together with explicit coherence data expressing their compatibility. Double categories provide exactly this separation. One direction records inclusions of regions, so isotony becomes functorial in that direction. The other direction records the additional relations one wants to track, and squares express that restriction along inclusions is compatible with transport along these non-inclusion morphisms.

In the next section we implement this idea concretely. We construct double categories $\mathbf{Mink}(M)$ and $\mathbf{vNA}$ and define a double functor
$$
\mathbb{F}:\mathbf{Mink}(M)\to \mathbf{vNA},
$$
sending a region $U$ to its algebra $\mathcal{A}(U)$, inclusions to the usual $*$-homomorphisms, and horizontal causal data to Hilbert bimodules composed by Connes fusion. The resulting squares package the compatibility conditions that, in the $1$-categorical formulation, are imposed externally as axioms.

\section{Double Categorical AQFT}\label{sec:double}

\textbf{Notation and Tools.} Throughout this section, every double category is a \emph{pseudo} double category: an internal category object in $\mathbf{Cat}$ where the horizontal direction is weak (associative and unital only up to coherent isomorphism), while vertical composition is strict. Intuitively: vertical arrows behave like honest morphisms in an ordinary category, and horizontal arrows compose like $1$-cells in a bicategory.

\begin{definition}[Pseudo double category]
A pseudo double category $\mathbb{D}$ consists of:
\begin{enumerate}[label=\alph*)]
  \item a category $\mathbb{D}_0$ (objects and vertical morphisms),
  \item a category $\mathbb{D}_1$ (objects are horizontal morphisms, morphisms are squares),
  \item functors
  $$
  s,t:\mathbb{D}_1 \rightrightarrows \mathbb{D}_0
  $$
  (horizontal source/target),
  \item a functor
  $$
  U:\mathbb{D}_0 \to \mathbb{D}_1
  $$
  (horizontal identity assignment),
  \item and a horizontal composition functor
  $$
  \odot:\mathbb{D}_1 \times_{\mathbb{D}_0} \mathbb{D}_1 \longrightarrow \mathbb{D}_1,
  $$
  defined on the pullback of $t$ and $s$ (so $(H,K)$ is composable exactly when $t(H)=s(K)$).
\end{enumerate}
These data make
$$
(\mathbb{D}_0,\mathbb{D}_1,s,t,U,\odot)
$$
an internal category in $\mathbf{Cat}$ up to specified coherent natural isomorphisms (a horizontal associator and left/right unitors satisfying the pentagon and triangle identities). Vertical composition is strictly the categorical composition in $\mathbb{D}_0$ and $\mathbb{D}_1$.
\end{definition}

An object of $\mathbb{D}_1$ is a horizontal morphism. If $H \in \mathrm{Ob}(\mathbb{D}_1)$, we write
$$
H: A \Rightarrow B
$$
to mean
$$
s(H)=A, \quad t(H)=B,
$$
where $s$ and $t$ denote the horizontal source and target in $\mathbb{D}_0$.

A morphism in $\mathbb{D}_1$ is a square. Concretely, a square is a morphism
$$
\alpha: H \longrightarrow K
$$
in the category $\mathbb{D}_1$, where $H,K \in \mathrm{Ob}(\mathbb{D}_1)$ are horizontal morphisms. Thus, if
$$
H: A \Rightarrow B,
\quad
K: A' \Rightarrow B',
$$
then applying the functors $s,t:\mathbb{D}_1 \to \mathbb{D}_0$ to the morphism $\alpha:H\to K$ produces the vertical boundary maps
$$
s(\alpha): A=s(H) \longrightarrow s(K)=A',
\qquad
t(\alpha): B=t(H) \longrightarrow t(K)=B'
$$
in $\mathbb{D}_0$. We often denote these boundary maps by
$$
f := s(\alpha): A \to A',
\qquad
g := t(\alpha): B \to B'.
$$

\[\begin{tikzcd}
	A & B \\
	{A'} & {B'}
	\arrow["H", from=1-1, to=1-2]
	\arrow["f"', from=1-1, to=2-1]
	\arrow["g", from=1-2, to=2-2]
	\arrow["K"', from=2-1, to=2-2]
\end{tikzcd}\]

In the depiction above, $H$ is the top horizontal arrow, $K$ is the bottom horizontal arrow, and $(f,g)$ are the left and right vertical arrows determined by the square $\alpha$.

Recall that the functor
$$
U:\mathbb{D}_0 \to \mathbb{D}_1
$$
assigns horizontal identities and their unit squares. It has two distinct typings, and it is worth keeping both straight:

\begin{enumerate}[label=\alph*)]
\item For each object $A \in \mathrm{Ob}(\mathbb{D}_0)$, the object $U(A) \in \mathrm{Ob}(\mathbb{D}_1)$ is the horizontal identity at $A$. We write
$$
U(A) =: U_A : A \Rightarrow A,
$$
so that
$$
s(U_A)=A=t(U_A).
$$

\item For each vertical morphism $f:A\to A'$ in $\mathbb{D}_0$, the morphism $U(f):U_A \to U_{A'}$ in $\mathbb{D}_1$ is a square whose vertical boundary is $(f,f)$, i.e.
$$
s\bigl(U(f)\bigr)=f,
\qquad
t\bigl(U(f)\bigr)=f.
$$
We denote this unit square by
$$
\iota_f := U(f).
$$
Equivalently, $\iota_f$ may be depicted as:

\[\begin{tikzcd}
	A & A \\
	{A'} & {A'}
	\arrow["{U_{A}}", from=1-1, to=1-2]
	\arrow["f"', from=1-1, to=2-1]
	\arrow["f", from=1-2, to=2-2]
	\arrow["{U_{A'}}", from=2-1, to=2-2]
\end{tikzcd}\]

\end{enumerate}

For any vertical morphism $\varphi$ in $\mathbb{D}_0$, $U(\varphi)$ is a square whose vertical boundary is $(\varphi,\varphi)$ and whose horizontal source and target are horizontal identities. In particular, $U(\varphi)$ cannot have prescribed vertical boundary $(\varphi_1,\varphi_2)$ unless $\varphi_1=\varphi_2=\varphi$.

Throughout the construction, we make use of Juan Orendain's free globularly generated double category construction, detailed in \cite{orendain2019freeglobularilygenerateddouble}\cite{orendain2021freeglobularlygenerateddouble}. To this end, we recall the pieces of Orendain's setup that we will actually use later.

\begin{definition}[Globular square]
A square
$$
\alpha: H \longrightarrow K
$$
is globular if its vertical boundary maps are identities; equivalently, if
$$
s(H)=s(K),\quad t(H)=t(K),\quad
s(\alpha)=\mathrm{id}_{s(H)},\quad
t(\alpha)=\mathrm{id}_{t(H)}.
$$
Equivalently, it is a square of the form:

\[\begin{tikzcd}
	A & B \\
	A & B
	\arrow["H", from=1-1, to=1-2]
	\arrow["{\text{id}}"', from=1-1, to=2-1]
	\arrow["{\text{id}}", from=1-2, to=2-2]
	\arrow["K"', from=2-1, to=2-2]
\end{tikzcd}\]
\end{definition}

Fix a pseudo double category $\mathbb{C}$. Write $G(\mathbb{C})$ for its collection of globular squares. For each vertical morphism $f \in \mathrm{Mor}(\mathbb{C}_0)$, write
$
\iota_f := U(f)
$
for the corresponding unit square.

\begin{definition}[$0$-marked squares]
Let $\mathbb{C}$ be a pseudo double category. The set of $0$-marked squares in $\mathbb{C}$ is
$$
G_{\mathbb{C}}
:=
G(\mathbb{C}) \,\cup\, \{\iota_f \mid f \in \mathrm{Mor}(\mathbb{C}_0)\},
$$
where $G(\mathbb{C})$ denotes the collection of globular squares in $\mathbb{C}$ and $\iota_f:=U(f)$ is the unit square associated to a vertical morphism $f$.
\end{definition}

\begin{definition}[Globularly generated double category] 
Following Orendain, a pseudo double category $\mathbb{C}$ is globularly generated if every square of $\mathbb{C}$ lies in the smallest class of squares that contains $G_{\mathbb{C}}$ and is closed under vertical and horizontal composition. Equivalently, the closure of $G_{\mathbb{C}}$ under the double-category operations is all of $\mathrm{Mor}(\mathbb{C}_1)$.
\end{definition}

\begin{definition}[$\gamma$-construction]
Define $\gamma\mathbb{C}$ to be the smallest sub-double-category of $\mathbb{C}$ which:
\begin{enumerate}[label=\alph*)]
\item has the same objects and vertical morphisms as $\mathbb{C}$,
\item has the same horizontal morphisms as $\mathbb{C}$,
\item and whose squares contain $G_{\mathbb{C}}$ and are closed under vertical and horizontal composition (hence under whiskering).
\end{enumerate}
By construction, $\gamma\mathbb{C}$ is globularly generated.
\end{definition}

\begin{definition}[Decorated horizontalization $H^\ast$]
Let $\mathbb{C}$ be a pseudo double category. The decorated horizontalization $H^\ast(\mathbb{C})$ is the decorated bicategory with:
\begin{enumerate}[label=\alph*)]
\item objects $\mathrm{Ob}(\mathbb{C}_0)$,
\item decoration given by the vertical category $\mathbb{C}_0$,
\item horizontal $1$-cells $A\to B$ given by horizontal morphisms $H\in \mathrm{Ob}(\mathbb{C}_1)$ with $s(H)=A$ and $t(H)=B$,
\item $2$-cells given by globular squares $\alpha:H\Rightarrow K$, i.e. morphisms $\alpha:H\to K$ in $\mathbb{C}_1$ with identity vertical boundary,
\item horizontal composition and units induced by $\odot$ and $U$, with associator and unitors inherited from the pseudo double category structure.
\end{enumerate}
Thus $H^\ast(\mathbb{C})$ retains horizontal composition together with globular $2$-cells, and retains $\mathbb{C}_0$ as decoration, but discards non-globular square data.
\end{definition}

\begin{theorem}[Minimality of $\gamma\mathbb{C}$ at fixed $H^\ast$]
It is proven that
$$
H^\ast(\gamma\mathbb{C}) = H^\ast(\mathbb{C}),
$$
and that $\gamma\mathbb{C}$ is contained in every sub-double-category $\mathbb{D}\subseteq \mathbb{C}$ satisfying
$$
H^\ast(\mathbb{D}) = H^\ast(\mathbb{C}).
$$
In particular, $\gamma\mathbb{C}$ is the minimal sub-double-category of $\mathbb{C}$ with the same decorated horizontalization \cite{orendain2021freeglobularlygenerateddouble}.
\end{theorem}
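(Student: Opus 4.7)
The plan is to handle the two claims in sequence, each reduced to the defining closure property of $\gamma\mathbb{C}$ together with a careful unpacking of what $H^\ast$ retains.

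For the equality $H^\ast(\gamma\mathbb{C}) = H^\ast(\mathbb{C})$, observe that by construction $\gamma\mathbb{C}$ shares its objects, vertical morphisms, and horizontal morphisms with $\mathbb{C}$, so the object collection, the decorating category $\mathbb{C}_0$, and the horizontal $1$-cells of the two decorated bicategories agree on the nose. The only content left to verify is that their globular $2$-cells coincide. One inclusion is automatic, since $\gamma\mathbb{C}$ is a sub-double-category of $\mathbb{C}$. The reverse inclusion uses the defining chain $G(\mathbb{C}) \subseteq G_{\mathbb{C}} \subseteq \mathrm{Mor}((\gamma\mathbb{C})_1)$, which places every globular square of $\mathbb{C}$ inside $\gamma\mathbb{C}$. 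Horizontal composition, units, and coherence constraints are inherited from $\mathbb{C}$, so the decorated bicategories are literally equal.

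For the minimality claim, let $\mathbb{D}\subseteq \mathbb{C}$ be any sub-double-category with $H^\ast(\mathbb{D}) = H^\ast(\mathbb{C})$. I would unpack this equality to recover that $\mathbb{D}$ has the same objects, vertical morphisms, and horizontal morphisms as $\mathbb{C}$, and that $G(\mathbb{C}) \subseteq \mathrm{Mor}(\mathbb{D}_1)$. Because $\mathbb{D}_0 = \mathbb{C}_0$ and $\mathbb{D}$ is closed under the unit functor $U$, the square $\iota_f = U(f)$ lies in $\mathbb{D}$ for every vertical morphism $f$ of $\mathbb{C}$; combined with the previous observation, this yields $G_{\mathbb{C}} \subseteq \mathrm{Mor}(\mathbb{D}_1)$. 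Since $\mathbb{D}$ is closed under both vertical and horizontal composition of squares, it contains the closure of $G_{\mathbb{C}}$ under these operations; but that closure is $\gamma\mathbb{C}$ by definition, so $\gamma\mathbb{C}\subseteq \mathbb{D}$.

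The main subtlety is the pseudo setting: horizontal composition is associative and unital only up to globular coherence isomorphisms, so the phrase ``closure under horizontal composition'' tacitly requires that associators and unitors already belong to $\gamma\mathbb{C}$. I would check that these coherence squares are themselves globular, hence lie in $G(\mathbb{C}) \subseteq G_{\mathbb{C}}$, and that whiskering by vertical arrows interacts compatibly with them so that $\gamma\mathbb{C}$ inherits a bona fide pseudo double structure rather than merely a closure class of squares. Once this coherence bookkeeping is settled, both parts reduce cleanly to the universal property of the closure of $G_{\mathbb{C}}$ inside $\mathrm{Mor}(\mathbb{C}_1)$.
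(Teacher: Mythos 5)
The paper does not reproduce a proof of this theorem; it is stated with the phrase ``It is proven that'' and cited to Orendain \cite{orendain2021freeglobularlygenerateddouble}. Your proposal therefore fills in an argument that the paper itself leaves implicit, and it is the natural one: both directions reduce to the observation that $H^\ast$ records exactly the data of the decorating category $\mathbb{C}_0$, the horizontal $1$-cells, and the globular $2$-cells, while $\gamma\mathbb{C}$ is by construction the closure of $G_{\mathbb{C}}$ under vertical and horizontal composition.

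Your two steps are sound. For $H^\ast(\gamma\mathbb{C}) = H^\ast(\mathbb{C})$: the object, decoration, and horizontal-$1$-cell data coincide by clauses (a) and (b) of the $\gamma$-construction, and the globular $2$-cells coincide because $G(\mathbb{C}) \subseteq G_{\mathbb{C}} \subseteq \mathrm{Mor}((\gamma\mathbb{C})_1) \subseteq \mathrm{Mor}(\mathbb{C}_1)$ forces $G(\gamma\mathbb{C}) = G(\mathbb{C})$. For minimality: from $H^\ast(\mathbb{D}) = H^\ast(\mathbb{C})$ you correctly extract $\mathbb{D}_0 = \mathbb{C}_0$, the same horizontal morphisms, and $G(\mathbb{C}) \subseteq \mathrm{Mor}(\mathbb{D}_1)$; the fact that a sub-double-category restricts the unit functor $U$ then gives $\iota_f \in \mathbb{D}$ for all $f \in \mathrm{Mor}(\mathbb{C}_0)$, hence $G_{\mathbb{C}} \subseteq \mathrm{Mor}(\mathbb{D}_1)$, and closure of $\mathbb{D}$ under the square compositions forces $\gamma\mathbb{C} \subseteq \mathbb{D}$.

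Your flagged subtlety about the pseudo setting is the right one to flag. The associators and unitors of $\mathbb{C}$ are globular squares (their vertical boundaries are identities), so they lie in $G(\mathbb{C}) \subseteq G_{\mathbb{C}}$ and hence in $\gamma\mathbb{C}$; with that observation, ``closure under horizontal composition'' is meaningful and $\gamma\mathbb{C}$ genuinely inherits the pseudo double structure from $\mathbb{C}$. You leave this as a plan rather than carrying it out, but the verification is routine. Overall the argument is correct and matches the expected route; there is no gap beyond the acknowledged bookkeeping, which you should actually write out if this were a full proof rather than a plan.
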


Let $\mathbf{bCat}^\ast$ denote Orendain's category of decorated bicategories and decorated pseudofunctors, and let $\mathbf{gCat}$ denote the category of globularly generated pseudo double categories and double functors.

\begin{theorem}[Adjunction $Q \dashv H^\ast$ on globularly generated double categories]
We can construct a left adjoint
$$
Q:\mathbf{bCat}^\ast \;\rightleftarrows\; \mathbf{gCat}: H^\ast\!\!\restriction_{\mathbf{gCat}},
$$
with unit
$$
j_{\mathbb{B}}:\mathbb{B}\longrightarrow H^\ast Q(\mathbb{B}),
$$
and counit
$$
\pi_{\mathbb{C}}:QH^\ast(\mathbb{C})\longrightarrow \mathbb{C}
$$
for globularly generated $\mathbb{C}$ \cite{orendain2021freeglobularlygenerateddouble}.
\end{theorem}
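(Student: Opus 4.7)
The plan is to build $Q$ as a free construction presented by generators and relations. Given a decorated bicategory $\mathbb{B}$ with decoration category $\mathbb{B}_0$, I would define $Q(\mathbb{B})$ to have the objects and vertical morphisms of $\mathbb{B}_0$, horizontal morphisms given by the 1-cells of $\mathbb{B}$, and squares obtained as formal horizontal/vertical composites of two families of generators: the globular 2-cells of $\mathbb{B}$, and unit squares $\iota_f$ for each $f\in\mathrm{Mor}(\mathbb{B}_0)$. I would then mod out by the minimal relations required for a pseudo double category: source/target functoriality, interchange between horizontal and vertical composition, the pentagon and triangle for the horizontal associator and unitors (inherited from $\mathbb{B}$), and strict functoriality of vertical composition. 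By design, the generating set is exactly $G_{Q(\mathbb{B})}$, so $Q(\mathbb{B})$ lies in $\mathbf{gCat}$. A decorated pseudofunctor $F:\mathbb{B}\to\mathbb{B}'$ then extends to $QF$ by the obvious assignment on generators, with well-definedness a purely bookkeeping verification that every imposed relation in $Q(\mathbb{B})$ maps to a relation in $Q(\mathbb{B}')$.

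\textbf{Unit, counit, and naturality.} Next I would define the unit $j_{\mathbb{B}}:\mathbb{B}\to H^{\ast}Q(\mathbb{B})$ to be the identity on objects, vertical morphisms, and horizontal 1-cells, and the canonical inclusion of globular 2-cells as square generators. This is well-typed precisely because $H^\ast$ retains exactly the globular squares and the decoration. For the counit, given $\mathbb{C}\in\mathbf{gCat}$, the squares of $QH^{\ast}(\mathbb{C})$ are freely generated by the globular squares of $\mathbb{C}$ together with the units $\iota_f$; I would define $\pi_{\mathbb{C}}$ as the identity on objects, verticals, and horizontals, and on squares by evaluating each formal composite at its actual composite in $\mathbb{C}$. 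Well-definedness follows because every relation imposed in $QH^{\ast}(\mathbb{C})$ already holds in $\mathbb{C}$ (being a pseudo double category), and surjectivity on squares is exactly the hypothesis that $\mathbb{C}$ is globularly generated. Naturality of $j$ and $\pi$ in their respective categories reduces to checking compatibility with decorated pseudofunctors and double functors on the generating data, which is automatic.

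\textbf{Triangle identities and main obstacle.} Finally I would verify the two triangles $\pi_{Q\mathbb{B}}\circ Q(j_{\mathbb{B}})=\mathrm{id}_{Q\mathbb{B}}$ and $H^{\ast}(\pi_{\mathbb{C}})\circ j_{H^{\ast}\mathbb{C}}=\mathrm{id}_{H^{\ast}\mathbb{C}}$. Both reduce to checking agreement on the generating globular 2-cells and unit squares, where each side acts by the identity by construction; extension to all squares is then forced because $Q(\mathbb{B})$ and $\mathbb{C}$ are both globularly generated, so any two double functors agreeing on generators must coincide. The main obstacle, and the one that I expect to consume most of the work, is establishing that $Q(\mathbb{B})$ genuinely lies in $\mathbf{gCat}$: one must show that the quotient by the relations above does not produce unexpected identifications among composites of globular 2-cells and unit squares, and conversely that the coherence axioms for the horizontal pseudostructure descend to the quotient without obstruction. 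Here the minimality theorem for $\gamma\mathbb{C}$ is the key lever, since it guarantees that a pseudo double category is determined by its decorated horizontalization together with the units, which is exactly what allows the free construction to be both nontrivial and correctly universal.
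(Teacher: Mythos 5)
The paper does not supply its own proof of this theorem; it is cited wholesale from Orendain \cite{orendain2021freeglobularlygenerateddouble}, so the comparison is against Orendain's construction. Your overall architecture (free construction on the generating $0$-marked squares, unit by inclusion of generators, counit by evaluation of formal composites using globular generation of $\mathbb{C}$, triangle identities checked on generators) is the right shape and does match Orendain's strategy in outline. You also correctly identify that the counit's surjectivity on squares is exactly the hypothesis that $\mathbb{C}$ is globularly generated, and that the well-definedness of $\pi_{\mathbb{C}}$ rests on the relations holding in the genuine pseudo double category $\mathbb{C}$.

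There is, however, a genuine error in your final paragraph. You write that the minimality theorem for $\gamma\mathbb{C}$ ``guarantees that a pseudo double category is determined by its decorated horizontalization together with the units.'' That is not what the theorem says, and it is false. The minimality theorem is an internal statement: $\gamma\mathbb{C}$ is the smallest sub-double-category \emph{of a fixed ambient $\mathbb{C}$} with $H^\ast(\gamma\mathbb{C}) = H^\ast(\mathbb{C})$. It does not give reconstruction of a pseudo double category (or even a globularly generated one) from its decorated horizontalization. Indeed, the paper's own remark immediately after the theorem stresses that the unit $j_{\mathbb{B}}\colon \mathbb{B}\to H^\ast Q(\mathbb{B})$ is substantive and generally not invertible, and the subsequent definition of \emph{saturation} exists precisely because $H^\ast$ is not injective: free composition can create new globular $2$-cells out of unit squares and existing globular cells, enlarging the horizontalization. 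If your proposed reconstruction claim were true, saturation would be automatic and the distinction between $Q(\mathbb{B})$ and $Q(H^\ast Q(\mathbb{B}))$ would collapse, contradicting the theory the theorem sits in.

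You have also somewhat misplaced the technical weight. That $Q(\mathbb{B})$ lies in $\mathbf{gCat}$ is essentially immediate, because its squares are by definition composites of the $0$-marked generators; quotienting by relations cannot destroy globular generation. The genuine technical burden in Orendain's construction is not ``avoiding unexpected identifications'' or landing in $\mathbf{gCat}$, but carefully building the pseudo structure on the free object — showing that the horizontal composition, associators, and unitors are well-defined on formal composites and satisfy the coherence axioms — which Orendain handles via an explicit filtration of the square sets rather than a single generators-and-relations quotient. Your proof sketch would be correct in spirit if you replaced the final paragraph with an appeal to that inductive coherence argument, and dropped the incorrect reconstruction claim about $H^\ast$.
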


The direction of the unit is substantive: $Q$ is free, so applying $H^\ast$ after free generation can enlarge the decorated bicategory one started with.

\textit{Remark.} The unit $j_{\mathbb{B}}:\mathbb{B}\to H^\ast Q(\mathbb{B})$ identifies $\mathbb{B}$ with its image inside $H^\ast Q(\mathbb{B})$. In general there is no canonical map $H^\ast Q(\mathbb{B})\to \mathbb{B}$, reflecting that free generation may introduce new composites and hence additional globular squares in $H^\ast Q(\mathbb{B})$ \cite{orendain2019freeglobularilygenerateddouble}\cite{orendain2021freeglobularlygenerateddouble}.

\begin{definition}[Saturation]
The saturation of a decorated bicategory $\mathbb{B}$ is $H^\ast Q(\mathbb{B})$. One calls $\mathbb{B}$ saturated if
$$
\mathbb{B}=H^\ast Q(\mathbb{B}),
$$
equivalently if the unit $j_{\mathbb{B}}:\mathbb{B}\to H^\ast Q(\mathbb{B})$ is an isomorphism \cite{orendain2019freeglobularilygenerateddouble}.
\end{definition}

\textit{Remark.} Saturation is not automatic. Without saturation, the free composites in $Q(\mathbb{B})$ can induce globular $2$-cells in $H^\ast Q(\mathbb{B})$ not present in $\mathbb{B}$, so $Q(\mathbb{B})$ need not internalize $\mathbb{B}$. By contrast, $Q\bigl(H^\ast Q(\mathbb{B})\bigr)$ always internalizes the saturation \cite{orendain2019freeglobularilygenerateddouble}.

\textbf{Double Category of Spacetime.} We now define the domain of our AQFT double functor: a double category of regions in a fixed spacetime $M$, denoted $\mathbf{Mink}(M)$. The philosophy is simple: vertical arrows are inclusions (the isotony direction), horizontal arrows are admissible embeddings (the covariance/transport direction), and squares are just commuting squares of embeddings.

Fix a spacetime $M$. Let $\mathcal{R}(M)$ be a chosen class (or set) of admissible regions.

\begin{definition}[Embedding category]
Let $\mathbf{Emb}(M)$ be the category whose objects are regions $U\in \mathcal{R}(M)$ and whose morphisms $h:U\hookrightarrow V$ are admissible embeddings. Identities are admissible and admissible embeddings are closed under composition, hence $\mathbf{Emb}(M)$ is well-defined.
\end{definition}

Write $\mathbf{Inc}(M)\subseteq \mathbf{Emb}(M)$ for the wide subcategory of inclusions and $\mathbf{Iso}(M)\subseteq \mathbf{Emb}(M)$ for the subgroupoid of admissible isomorphisms.

A basic piece of geometry is a commuting square in $\mathbf{Emb}(M)$,
\[\begin{tikzcd}
	U & V \\
	{U'} & {V'}
	\arrow["h", from=1-1, to=1-2]
	\arrow["i"', from=1-1, to=2-1]
	\arrow["j", from=1-2, to=2-2]
	\arrow["{h'}"', from=2-1, to=2-2]
\end{tikzcd}\quad \text{meaning}\quad j \circ h = h'\circ i. \] 

\begin{definition}[Spacetime double category $\mathbf{Mink}(M)$]
Define a (strict) double category $\mathbf{Mink}(M)$ as follows:
\begin{enumerate}[label=\alph*)]
\item $\mathbf{Mink}(M)_0 := \mathbf{Inc}(M)$. Thus the objects are regions and the vertical morphisms are inclusions.
\item The horizontal morphisms $U\Rightarrow V$ are the admissible embeddings $h:U\hookrightarrow V$, i.e. the morphisms of $\mathbf{Emb}(M)$.
\item A square in $\mathbf{Mink}(M)$ from $h:U\hookrightarrow V$ to $h':U'\hookrightarrow V'$ with vertical boundary $i:U\hookrightarrow U'$ and $j:V\hookrightarrow V'$ is a commuting square in $\mathbf{Emb}(M)$:
\[\begin{tikzcd}
	U & V \\
	{U'} & {V'}
	\arrow["h", from=1-1, to=1-2]
	\arrow["i"', from=1-1, to=2-1]
	\arrow["j", from=1-2, to=2-2]
	\arrow["{h'}"', from=2-1, to=2-2]
\end{tikzcd} \quad (j \circ h = h' \circ i) \]
with $i,j$ inclusions.
\item Horizontal composition is composition in $\mathbf{Emb}(M)$; vertical composition is composition in $\mathbf{Inc}(M)$; squares compose by pasting commuting squares.
\item Horizontal identities are the identity embeddings $\mathrm{id}_U:U\hookrightarrow U$. The unit square on an inclusion $i:U\hookrightarrow U'$ is the commuting square between identity embeddings:
\[\begin{tikzcd}
	U & U \\
	{U'} & {U'}
	\arrow["{\text{id}}", from=1-1, to=1-2]
	\arrow["i"', from=1-1, to=2-1]
	\arrow["i", from=1-2, to=2-2]
	\arrow["{\text{id}}"', from=2-1, to=2-2]
\end{tikzcd}\]
\end{enumerate}
\end{definition}

\begin{lemma}
    $\mathbf{Mink}(M)$ is a strict double category.
\end{lemma}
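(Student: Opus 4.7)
The plan is to verify the axioms of a strict double category by unpacking all structure into commuting diagrams in the ordinary category $\mathbf{Emb}(M)$, after which strictness and interchange follow from the strict associativity and unitality of composition in $\mathbf{Emb}(M)$. Because every piece of data is a commuting square of embeddings, no nontrivial coherence isomorphisms ever appear.

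First I would check the object-level data. The category $\mathbb{D}_0 = \mathbf{Inc}(M)$ is well-defined since inclusions are closed under composition and identity embeddings are inclusions. Next I would verify that $\mathbb{D}_1$ is a category: its objects are admissible embeddings, and a morphism from $h$ to $h'$ is a commuting square with inclusion vertical sides. Given two such squares with matching vertical boundary (i.e. with the top side of the second equal to the bottom side of the first), their vertical pasting is again a commuting square, and the vertical sides remain inclusions because inclusions compose to inclusions. The identity morphism on $h:U\hookrightarrow V$ is the degenerate square with identity vertical sides, which trivially commutes. Associativity and unitality in $\mathbb{D}_1$ come directly from associativity and unitality of composition in $\mathbf{Inc}(M)$ applied to the two vertical sides.

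Then I would verify the source, target, and unit functors. The functors $s,t:\mathbb{D}_1\rightrightarrows \mathbb{D}_0$ send a square to its left and right vertical edge respectively; functoriality is immediate because pasting two squares vertically composes left edges with left edges and right edges with right edges. The horizontal identity functor $U:\mathbb{D}_0\to \mathbb{D}_1$ sends an object $U$ to $\mathrm{id}_U$ and an inclusion $i:U\hookrightarrow U'$ to the commuting square with identity embeddings on top and bottom and $i$ on both sides; this square commutes tautologically, and functoriality reduces to the identity $i'\circ i = i'\circ i$ for composable inclusions.

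Next I would construct horizontal composition and check strict associativity and unit laws. Horizontal composition of horizontal morphisms is composition in $\mathbf{Emb}(M)$, which is strictly associative and unital. For squares, given two horizontally composable squares (meaning the right vertical edge of the first equals the left vertical edge of the second), their horizontal pasting is the square whose top is the composite of the tops, whose bottom is the composite of the bottoms, and whose vertical sides are the outer verticals; commutativity of the pasted square follows from a short diagram chase using commutativity of the two input squares. Functoriality of $\odot$ on $\mathbb{D}_1\times_{\mathbb{D}_0}\mathbb{D}_1$ is the statement that horizontal pasting preserves vertical identities and commutes with vertical composition of stacks of squares, which is precisely the classical pasting lemma for commuting squares in $\mathbf{Emb}(M)$. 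The horizontal associator and unitors are literal equalities of commuting squares because composition in $\mathbf{Emb}(M)$ is strictly associative and unital, so the pentagon and triangle identities hold trivially. The interchange law $(\alpha\odot\beta)\ast(\alpha'\odot\beta') = (\alpha\ast\alpha')\odot(\beta\ast\beta')$ reduces to the equality of two identical $3\times 3$ pasted diagrams in $\mathbf{Emb}(M)$.

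I do not expect a conceptual obstacle here; the main thing to be careful about is bookkeeping, in particular checking that vertical sides of any composite square remain inclusions (not merely admissible embeddings) so that everything stays inside $\mathbf{Inc}(M)$, and verifying that horizontal pasting of squares actually commutes on the nose, which is the one place where a small diagram chase is required.
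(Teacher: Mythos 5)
Your proof is correct and follows the same approach as the paper: everything reduces to strict associativity and unitality of composition in the ordinary categories $\mathbf{Emb}(M)$ and $\mathbf{Inc}(M)$, with squares handled as pasted commuting diagrams whose interchange law is the outer-rectangle commutativity. The paper states this in a few sentences; you have simply unpacked the same argument into an explicit axiom-by-axiom verification, and your attention to the bookkeeping point that vertical edges of composite squares remain inclusions is a worthwhile detail that the paper leaves implicit.
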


\begin{proof}
Horizontal composition is strictly associative and unital because it is composition in the $1$-category $\mathbf{Emb}(M)$. Vertical composition is strictly associative and unital because it is composition in the $1$-category $\mathbf{Inc}(M)$. Squares compose by pasting commuting squares in $\mathbf{Emb}(M)$, and the interchange law holds strictly since both iterated composites express the same outer-rectangle commutativity in a $1$-category.
\end{proof}

\textit{Remark.} A globular square in $\mathbf{Mink}(M)$ has identity inclusions on its vertical sides:
\[\begin{tikzcd}
	U & V \\
	U & V
	\arrow["h", from=1-1, to=1-2]
	\arrow["{\text{id}}"', from=1-1, to=2-1]
	\arrow["{\text{id}}", from=1-2, to=2-2]
	\arrow["{h'}"', from=2-1, to=2-2]
\end{tikzcd}\]
which forces $h=h'$. Thus $G(\mathbf{Mink}(M))$ consists essentially of identity squares on horizontal morphisms. In particular, $\gamma(\mathbf{Mink}(M))$ can be much smaller than $\mathbf{Mink}(M)$; it is useful for minimality/uniqueness bookkeeping, but not as a replacement for the spacetime geometry itself.

\textbf{Double Category of Observables.} We now define the codomain of our AQFT double functor: a pseudo double category of von Neumann algebras, denoted $\mathbf{vNA}$. Vertically, we keep the usual net direction (normal unital $*$-homomorphisms). Horizontally, we use correspondences so that composition is Connes fusion. Squares are then bimodular intertwiners, and commuting spacetime squares become well-typed squares of correspondences.

\begin{definition}[Vertical morphism class $\mathbf{vN}_{\mathrm{vert}}$]
Fix a class $V$ of von Neumann algebras and a subcategory $\mathbf{vN}_{\mathrm{vert}}$ with $\mathrm{Ob}(\mathbf{vN}_{\mathrm{vert}})=V$, whose morphisms are those normal unital $*$-homomorphisms $\varphi:A\to B$ for which the operator-algebraic framework used below provides a functorial standard-form assignment
$$
L^2(\varphi):L^2(A)\longrightarrow L^2(B)
$$
compatible with the correspondence structure and Connes fusion \cite{orendain2021freeglobularlygenerateddouble}.
\end{definition}

\begin{definition}[Correspondence]
For von Neumann algebras $A,B$, an $A$-$B$ correspondence
$$
H: A \rightsquigarrow B
$$
is a Hilbert space $H$ equipped with commuting normal unital $*$-representations
$$
\lambda_H: A \to B(H),
\quad
\rho_H: B^{\mathrm{op}} \to B(H).
$$
Equivalently, $H$ carries a normal left $A$-action and a normal right $B$-action that commute, meaning
$$
\lambda_H(a)\,\rho_H(b^{\mathrm{op}})=\rho_H(b^{\mathrm{op}})\,\lambda_H(a)
\qquad
(a\in A,\ b\in B).
$$
Equivalently, $\lambda_H(A)$ and $\rho_H(B^{\mathrm{op}})$ are commuting von Neumann subalgebras of $B(H)$.
\end{definition}

\begin{definition}[Intertwiner]
If $H,H':A\rightsquigarrow B$ are correspondences, an intertwiner $T:H\to H'$ is a bounded linear map such that for all $a\in A$ and $b\in B$ the following diagrams commute:

\[\begin{tikzcd}
	H & {H'} \\
	H & {H'}
	\arrow["T", from=1-1, to=1-2]
	\arrow["{\lambda_{H}(a)}"', from=1-1, to=2-1]
	\arrow["{\lambda_{H'}(a)}", from=1-2, to=2-2]
	\arrow["T"', from=2-1, to=2-2]
\end{tikzcd} \quad \text{and}\quad\begin{tikzcd}
	H & {H'} \\
	H & {H'}
	\arrow["T", from=1-1, to=1-2]
	\arrow["{\rho_{H}(b^{\mathrm{op}})}"', from=1-1, to=2-1]
	\arrow["{\rho_{H'}(b^{\mathrm{op}})}", from=1-2, to=2-2]
	\arrow["T"', from=2-1, to=2-2]
\end{tikzcd}\]
\end{definition}

Equivalently,
$
T(a\cdot \xi \cdot b)=a\cdot T(\xi)\cdot b
\quad (a\in A,\ b\in B).
$
Intertwiners form complex vector spaces.

\begin{definition}[Standard form and $L^2(A)$]
For each $A\in V$, choose its Haagerup standard form $(H_A,\lambda_A,J_A,P_A)$. Define the $A$-$A$ correspondence
$$
L^2(A):=H_A
$$
with left action $\lambda_{L^2(A)}:=\lambda_A$ and right action defined by
$$
\rho_{L^2(A)}(a^{\mathrm{op}}):=J_A\,\lambda_A(a^*)\,J_A
\qquad
(a\in A).
$$
The left and right actions commute, and $L^2(A)$ is the horizontal identity correspondence at $A$.

It is useful to keep the two commuting actions visible as a boundary diagram:
\[\begin{tikzcd}
	A & {B(L^2(A))} \\
	A & {B(L^2(A))}
	\arrow["{\lambda_{A}}", from=1-1, to=1-2]
	\arrow["{\text{id}}"', from=1-1, to=2-1]
	\arrow["{\text{id}}", from=1-2, to=2-2]
	\arrow["{\rho_{L^{2}(A)}}"', from=2-1, to=2-2]
\end{tikzcd}\quad \text{with}\quad \lambda_A(a)\,\rho_{L^2(A)}(b^{\mathrm{op}})=\rho_{L^2(A)}(b^{\mathrm{op}})\,\lambda_A(a).\]
\end{definition}

\begin{definition}[Connes fusion]
If $H:A\rightsquigarrow B$ and $K:B\rightsquigarrow C$, their horizontal composite is the Connes fusion
$$
H\boxtimes_B K: A\rightsquigarrow C,
$$
the standard relative tensor product of correspondences over $B$. Connes fusion is associative and unital up to canonical isomorphism, with unit $L^2(B)$. This is the source of the pseudo coherence in the horizontal direction.
\end{definition}

\textit{Remark.} No linear structure is assumed on $\mathbf{vN}_{\mathrm{vert}}$ itself. The linearity used below is that spaces of squares (intertwiners) are complex vector spaces and the double-category operations are linear or bilinear on them.

Write $Q_{\mathrm{lin}}$ for the free globularly generated construction in the linear/enriched setting appropriate to correspondences and intertwiners \cite{orendain2021freeglobularlygenerateddouble}.

Now, we have all the components to define our target (pseudo) double category:

\begin{definition}
Let $\widetilde{\mathbb{W}}^\ast$ be a saturated decorated bicategory in the operator-algebraic framework whose decorated horizontalization encodes:
\begin{enumerate}[label=\alph*)]
\item objects $V$,
\item vertical morphisms $\mathbf{vN}_{\mathrm{vert}}$,
\item horizontal morphisms correspondences,
\item horizontal identities $A\mapsto L^2(A)$,
\item horizontal composition Connes fusion.
\end{enumerate}
Define
$$
\mathbf{vNA} := Q_{\mathrm{lin}}\bigl(\widetilde{\mathbb{W}}^\ast\bigr).
$$
Then $\mathbf{vNA}$ is a globularly generated pseudo double category with vertical category $\mathbf{vN}_{\mathrm{vert}}$, horizontal morphisms given by correspondences, and squares given by intertwiners.
\end{definition}

For every vertical morphism $\varphi:A\to B$ in $\mathbf{vN}_{\mathrm{vert}}$, the horizontal-identity functor
$
U_{\mathbf{vNA}}:\mathbf{vNA}_0 \to \mathbf{vNA}_1
$
yields a canonical unit square
$$
\iota_\varphi := U_{\mathbf{vNA}}(\varphi): L^2(A) \longrightarrow L^2(B),
$$
with vertical boundary $(\varphi,\varphi)$ and horizontal source and target the identity correspondences $L^2(A)$ and $L^2(B)$. The underlying intertwiner is the standard-form map
$$
L^2(\varphi):L^2(A)\longrightarrow L^2(B).
$$ 

Pictorially, the unit square is a square in $\mathbf{vNA}_{1}$

\[\begin{tikzcd}
	A & A \\
	B & B
	\arrow["{L^2(A)}", from=1-1, to=1-2]
	\arrow["\varphi"', from=1-1, to=2-1]
	\arrow["\varphi", from=1-2, to=2-2]
	\arrow["{L^2(B)}"', from=2-1, to=2-2]
\end{tikzcd} \quad \text{with interior}\quad \iota_\varphi: L^2(A) \Rightarrow L^2(B).\]

and the interior condition is that the bimodularity diagrams commute with respect to the induced $A$- and $B$-actions.

The unit square construction always produces a square whose vertical boundary is $(\varphi,\varphi)$ and whose horizontal edges are identity correspondences. In the AQFT double functor, a commuting spacetime square

\[\begin{tikzcd}
	U & V \\
	{U'} & {V'}
	\arrow["h", from=1-1, to=1-2]
	\arrow["i"', from=1-1, to=2-1]
	\arrow["j", from=1-2, to=2-2]
	\arrow["{h'}"', from=2-1, to=2-2]
\end{tikzcd}\quad (j \circ h = h' \circ i)\]

yields vertical maps
$
\mathcal{A}(i):\mathcal{A}(U)\longrightarrow \mathcal{A}(U')$ and $
\mathcal{A}(j):\mathcal{A}(V)\longrightarrow \mathcal{A}(V'),
$
and horizontal edges given by correspondences built from $L^2(\mathcal{A}(V))$ and $L^2(\mathcal{A}(V'))$ with left actions restricted along $\mathcal{A}(h)$ and $\mathcal{A}(h')$. The assigned square uses the operator $L^2(\mathcal{A}(j))$, with boundary determined by
$
\mathcal{A}(j)\circ \mathcal{A}(h)=\mathcal{A}(h')\circ \mathcal{A}(i),
$
which is exactly what makes $L^2(\mathcal{A}(j))$ bimodular for the restricted actions (proved explicitly below).

\textbf{AQFT Double Functor.} The AQFT input is a functor on admissible embeddings landing in the vertical morphisms $\mathbf{vN}_{\mathrm{vert}}$. The double functor $\mathcal{F}_{\mathcal{A}}$ refines this by assigning to each embedding a correspondence built from standard form, and to each commuting spacetime square an intertwiner whose source/target correspondences and vertical boundary maps are the ones induced by that square.

Fix a functor
$
\mathcal{A}:\mathbf{Emb}(M)\longrightarrow \mathbf{vN}_{\mathrm{vert}}
$
such that:
\begin{enumerate}[label=\alph*)]
\item for every inclusion $i:U\hookrightarrow V$ in $\mathbf{Inc}(M)$, the morphism
$$
\mathcal{A}(i):\mathcal{A}(U)\longrightarrow \mathcal{A}(V)
$$
is injective,
\item $\mathcal{A}$ preserves identities and composition on admissible embeddings.
\end{enumerate}
The condition $\mathcal{A}(\mathbf{Emb}(M))\subseteq \mathbf{vN}_{\mathrm{vert}}$ ensures that for each admissible embedding $j:V\hookrightarrow V'$ the standard-form map
$$
L^2\bigl(\mathcal{A}(j)\bigr):L^2\bigl(\mathcal{A}(V)\bigr)\longrightarrow L^2\bigl(\mathcal{A}(V')\bigr)
$$
is available functorially in the target double category.

\begin{definition}[AQFT double functor $\mathcal{F}_{\mathcal{A}}$]
Define a double functor
$$
\mathcal{F}_{\mathcal{A}}:\mathbf{Mink}(M)\longrightarrow \mathbf{vNA}
$$
by the following assignments.
\begin{enumerate}[label=(\alph*)]
\item Objects. For a region $U$, set
$$
\mathcal{F}_{\mathcal{A}}(U):=\mathcal{A}(U).
$$

\item Vertical morphisms. For an inclusion $i:U\hookrightarrow V$, set
$$
\mathcal{F}_{\mathcal{A}}(i):=\mathcal{A}(i):\mathcal{A}(U)\to \mathcal{A}(V).
$$

\item Horizontal morphisms. For an admissible embedding $h:U\hookrightarrow V$, define $\mathcal{F}_{\mathcal{A}}(h)$ to be the correspondence
$$
\mathcal{F}_{\mathcal{A}}(h):\mathcal{A}(U)\rightsquigarrow \mathcal{A}(V)
$$
with underlying Hilbert space $L^2(\mathcal{A}(V))$, right $\mathcal{A}(V)$-action the standard right action on $L^2(\mathcal{A}(V))$, and left $\mathcal{A}(U)$-action given by restriction of scalars along
$$
\mathcal{A}(h):\mathcal{A}(U)\to \mathcal{A}(V).
$$
Concretely, if
$$
\lambda_V:\mathcal{A}(V)\to B\bigl(L^2(\mathcal{A}(V))\bigr)
$$
denotes the standard left representation, then the left action of $\mathcal{A}(U)$ on $\mathcal{F}_{\mathcal{A}}(h)$ is
$$
\lambda_h:=\lambda_V\circ \mathcal{A}(h):\mathcal{A}(U)\to B\bigl(L^2(\mathcal{A}(V))\bigr).
$$

\item Squares. Given a square in $\mathbf{Mink}(M)$, i.e. a commuting diagram in $\mathbf{Emb}(M)$ with inclusion verticals,
\[\begin{tikzcd}
	U & V \\
	{U'} & {V'}
	\arrow["h", from=1-1, to=1-2]
	\arrow["i"', from=1-1, to=2-1]
	\arrow["j", from=1-2, to=2-2]
	\arrow["{h'}"', from=2-1, to=2-2]
\end{tikzcd}\quad \text{with} \quad j \circ h = h' \circ i\]
define $\mathcal{F}_{\mathcal{A}}$ on such a square to be the square in $\mathbf{vNA}$
$$
\mathcal{F}_{\mathcal{A}}(\Box):\mathcal{F}_{\mathcal{A}}(h)\Longrightarrow \mathcal{F}_{\mathcal{A}}(h')
$$
whose vertical boundary is $\bigl(\mathcal{A}(i),\mathcal{A}(j)\bigr)$ and whose underlying intertwiner is the standard-form operator
$$
L^2\bigl(\mathcal{A}(j)\bigr):L^2\bigl(\mathcal{A}(V)\bigr)\longrightarrow L^2\bigl(\mathcal{A}(V')\bigr).
$$
Equivalently, the assigned square is displayed as
\[\begin{tikzcd}
	{\mathcal{A}(U)} & {\mathcal{A}(V)} \\
	{\mathcal{A}(U')} & {\mathcal{A}(V')}
	\arrow["{\mathcal{F}_{\mathcal{A}}(h)}", from=1-1, to=1-2]
	\arrow["{\mathcal{A}(i)}"', from=1-1, to=2-1]
	\arrow["{\mathcal{A}(j)}", from=1-2, to=2-2]
	\arrow["{\mathcal{F}_{\mathcal{A}}(h')}"', from=2-1, to=2-2]
\end{tikzcd}\]
with interior map $L^2(\mathcal{A}(j))$.

Bimodularity for the restricted left actions follows from functoriality,
$
\mathcal{A}(j)\circ \mathcal{A}(h)=\mathcal{A}(h')\circ \mathcal{A}(i),
$
and is verified explicitly in the square-typing check.
\end{enumerate}
\end{definition}

\textit{Remark.} The operator $L^2(\mathcal{A}(j))$ arises functorially from the unit-square mechanism in $\mathbf{vNA}$, but the square in (d) is not the unit square $U_{\mathbf{vNA}}(\mathcal{A}(j))$. The unit square has vertical boundary $\bigl(\mathcal{A}(j),\mathcal{A}(j)\bigr)$ and horizontal edges the identity correspondences $L^2(\mathcal{A}(V))$ and $L^2(\mathcal{A}(V'))$, whereas the square in (d) has vertical boundary $\bigl(\mathcal{A}(i),\mathcal{A}(j)\bigr)$ and horizontal edges the restricted correspondences $\mathcal{F}_{\mathcal{A}}(h)$ and $\mathcal{F}_{\mathcal{A}}(h')$. The underlying operator is the same $L^2(\mathcal{A}(j))$, but it is viewed as an intertwiner for different module structures.

\textbf{Functoriality.} Defining $\mathcal{F}_{\mathcal{A}}$ on squares is the only nontrivial typing point: a commuting spacetime square must determine a square in $\mathbf{vNA}$, hence an intertwiner compatible with the bimodule structures of the horizontal correspondences. Here the interior operator is forced to be $T=L^2(\mathcal{A}(j))$, and the commutativity $\mathcal{A}(j)\circ \mathcal{A}(h)=\mathcal{A}(h')\circ \mathcal{A}(i)$ is precisely what makes $T$ bimodular for the restricted left actions. We now write this out explicitly.

Fix a commuting square in $\mathbf{Mink}(M)$:

\[\begin{tikzcd}
	U & V \\
	{U'} & {V'}
	\arrow["h", from=1-1, to=1-2]
	\arrow["i"', from=1-1, to=2-1]
	\arrow["j", from=1-2, to=2-2]
	\arrow["{h'}"', from=2-1, to=2-2]
\end{tikzcd}\quad \text{so that}\quad j \circ h = h' \circ i.\]

Applying $\mathcal{A}:\mathbf{Emb}(M)\to \mathbf{vN}_{\mathrm{vert}}$ gives a commuting square of normal unital $*$-homomorphisms:

\[\begin{tikzcd}
	{\mathcal{A}(U)} & {\mathcal{A}(V)} \\
	{\mathcal{A}(U')} & {\mathcal{A}(V')}
	\arrow["{\mathcal{A}(h)}", from=1-1, to=1-2]
	\arrow["{\mathcal{A}(i)}"', from=1-1, to=2-1]
	\arrow["{\mathcal{A}(j)}", from=1-2, to=2-2]
	\arrow["{\mathcal{A}(h')}"', from=2-1, to=2-2]
\end{tikzcd} \quad \text{so that}\quad \mathcal{A}(j)\circ \mathcal{A}(h)=\mathcal{A}(h')\circ \mathcal{A}(i).\]

Set
$$
H:=\mathcal{F}_{\mathcal{A}}(h):\mathcal{A}(U)\rightsquigarrow \mathcal{A}(V),
\quad
H':=\mathcal{F}_{\mathcal{A}}(h'):\mathcal{A}(U')\rightsquigarrow \mathcal{A}(V').
$$
Then the underlying Hilbert spaces are
$$
H=L^2\bigl(\mathcal{A}(V)\bigr),
\quad
H'=L^2\bigl(\mathcal{A}(V')\bigr).
$$

Write $\lambda_V,\rho_V$ for the standard left and right actions of $\mathcal{A}(V)$ on $L^2(\mathcal{A}(V))$, and $\lambda_{V'},\rho_{V'}$ similarly for $\mathcal{A}(V')$. The right actions are the standard ones,
$$
\xi\cdot_H b:=\rho_V(b)\xi,
\quad
\eta\cdot_{H'} b':=\rho_{V'}(b')\eta,
$$
while the left actions are obtained by restriction of scalars along the embedding maps,
$$
a\cdot_H \xi:=\lambda_V\bigl(\mathcal{A}(h)(a)\bigr)\xi \quad (a\in \mathcal{A}(U)),
\quad
a'\cdot_{H'} \eta:=\lambda_{V'}\bigl(\mathcal{A}(h')(a')\bigr)\eta \quad (a'\in \mathcal{A}(U')).
$$

The square $\mathcal{F}_{\mathcal{A}}(\Box)$ is defined to have interior operator
$$
T:=L^2\bigl(\mathcal{A}(j)\bigr):L^2\bigl(\mathcal{A}(V)\bigr)\longrightarrow L^2\bigl(\mathcal{A}(V')\bigr).
$$
This is defined since $\mathcal{A}(j)\in \mathbf{vN}_{\mathrm{vert}}$ and the standard-form assignment is functorial on $\mathbf{vN}_{\mathrm{vert}}$. In particular, for all $x,b\in \mathcal{A}(V)$ and $\xi\in L^2\bigl(\mathcal{A}(V)\bigr)$,
$$
T\bigl(\lambda_V(x)\xi\bigr)=\lambda_{V'}\bigl(\mathcal{A}(j)(x)\bigr)\,T(\xi),
\quad
T(\xi\cdot b)=T(\xi)\cdot \mathcal{A}(j)(b).
$$
We will use these operator-algebraic identities again.

At this stage the data are fixed. It remains to check that these standard intertwining identities imply the bimodularity identities for $T$ with respect to the \emph{restricted} module structures defining $H$ and $H'$.

Let $b\in \mathcal{A}(V)$ and $\xi\in L^2\bigl(\mathcal{A}(V)\bigr)$. Since the right actions on $H$ and $H'$ are the standard ones, the right intertwining identity gives
$$
T(\xi\cdot_H b)=T(\xi\cdot b)=T(\xi)\cdot \mathcal{A}(j)(b)=T(\xi)\cdot_{H'} \mathcal{A}(j)(b).
$$
Thus $T$ has right boundary map $\mathcal{A}(j)$.

Let $a\in \mathcal{A}(U)$ and $\xi\in L^2\bigl(\mathcal{A}(V)\bigr)$. By definition of the restricted left action on $H$,
$$
T(a\cdot_H \xi)=T\bigl(\lambda_V(\mathcal{A}(h)(a))\,\xi\bigr).
$$
Applying the left intertwining identity gives
$$
T\bigl(\lambda_V(\mathcal{A}(h)(a))\,\xi\bigr)
=
\lambda_{V'}\bigl(\mathcal{A}(j)(\mathcal{A}(h)(a))\bigr)\,T(\xi).
$$
Using commutativity $\mathcal{A}(j)\circ \mathcal{A}(h)=\mathcal{A}(h')\circ \mathcal{A}(i)$, this becomes
$$
T(a\cdot_H \xi)=\lambda_{V'}\bigl(\mathcal{A}(h')(\mathcal{A}(i)(a))\bigr)\,T(\xi).
$$
Recognizing the restricted left action on $H'$, we obtain
$$
T(a\cdot_H \xi)=\mathcal{A}(i)(a)\cdot_{H'} T(\xi),
$$
which is the required left-module compatibility with left boundary map $\mathcal{A}(i)$.

The preceding identities show that $T=L^2(\mathcal{A}(j))$, regarded as an operator $H=\mathcal{F}_{\mathcal{A}}(h)\to H'=\mathcal{F}_{\mathcal{A}}(h')$, is bimodular for the induced boundary maps, namely left boundary $\mathcal{A}(i)$ and right boundary $\mathcal{A}(j)$. Equivalently, this assignment sends each commuting spacetime square to a well-typed square in $\mathbf{vNA}$, so $\mathcal{F}_{\mathcal{A}}$ is well-defined on squares.

\begin{proposition}
Therefore, $\mathcal{F}_{\mathcal{A}}$ defines a pseudo double functor
$$
\mathcal{F}_{\mathcal{A}}:\mathbf{Mink}(M)\longrightarrow \mathbf{vNA}.
$$
\end{proposition}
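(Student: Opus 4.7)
The plan is to verify the pseudo double functor axioms by reducing each one, in turn, to a functoriality or coherence fact already established for $\mathcal{A}$, for the standard-form assignment $L^2$, and for Connes fusion inside $\mathbf{vNA}$. The work naturally splits into three stages: strict functoriality in the vertical direction, construction and naturality of the horizontal compositor and unitor, and the coherence and square-pasting checks.

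First I would dispatch the vertical direction, where everything should be strict. On objects and vertical morphisms the assignment is $U \mapsto \mathcal{A}(U)$ and $i \mapsto \mathcal{A}(i)$, so vertical functoriality is the restriction of the hypothesis that $\mathcal{A}:\mathbf{Emb}(M)\to \mathbf{vN}_{\mathrm{vert}}$ is a functor. For vertical pasting of squares, two stacked commuting spacetime squares in $\mathbf{Mink}(M)$ with successive right edges $j:V\hookrightarrow V'$ and $k:V'\hookrightarrow V''$ are sent to interior operators $L^2(\mathcal{A}(j))$ and $L^2(\mathcal{A}(k))$, whose vertical composite $L^2(\mathcal{A}(k))\circ L^2(\mathcal{A}(j))=L^2(\mathcal{A}(k\circ j))$ is precisely the interior operator assigned to the outer square. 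This is the strict functoriality of $L^2$ on $\mathbf{vN}_{\mathrm{vert}}$ that was built into the definition of that vertical morphism class, and the boundary maps compose accordingly by functoriality of $\mathcal{A}$.

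Second, for the horizontal direction I would exhibit the compositor and unitor explicitly. For composable embeddings $h_1:U\hookrightarrow V$ and $h_2:V\hookrightarrow W$, I would take
\[
\mu_{h_2,h_1}: \mathcal{F}_{\mathcal{A}}(h_1) \boxtimes_{\mathcal{A}(V)} \mathcal{F}_{\mathcal{A}}(h_2) \;\xrightarrow{\sim}\; \mathcal{F}_{\mathcal{A}}(h_2\circ h_1)
\]
to be the canonical Connes fusion left-unitor $L^2(\mathcal{A}(V))\boxtimes_{\mathcal{A}(V)}L^2(\mathcal{A}(W))\xrightarrow{\sim}L^2(\mathcal{A}(W))$ in $\mathbf{vNA}$, transported along the restricted actions on the two factors; bimodularity for the restricted left $\mathcal{A}(U)$-action then follows exactly as in the square typing check above, using $\mathcal{A}(h_2\circ h_1)=\mathcal{A}(h_2)\circ \mathcal{A}(h_1)$. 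For horizontal identities, $\mathcal{F}_{\mathcal{A}}(\mathrm{id}_U)=L^2(\mathcal{A}(U))$ with left action $\lambda_U\circ \mathrm{id}_{\mathcal{A}(U)}=\lambda_U$, which literally equals $U_{\mathbf{vNA}}(\mathcal{A}(U))$, so the unit comparison is the identity. Naturality of $\mu$ in the horizontal arguments and compatibility with squares both reduce to the naturality of the Connes fusion unitor together with the intertwining identities for $L^2(\mathcal{A}(j))$ already used to establish bimodularity.

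Finally, I would verify the pentagon and two triangle identities required of a pseudo double functor. Because each compositor $\mu_{h_2,h_1}$ and each unit comparison is a canonical fusion unitor of standard forms in $\mathbf{vNA}$, and because the horizontal composite of assigned squares is computed by Connes fusion of the underlying intertwiners, every coherence diagram for $\mathcal{F}_{\mathcal{A}}$ is the restriction, along the restricted-action presentations of its edges, of the analogous diagram governing the pseudo structure of $\mathbf{vNA}$, which commutes by construction. The main obstacle I anticipate is therefore not any single diagram but the bookkeeping of restricted-action bimodules under fusion: one must consistently distinguish the standard left $\mathcal{A}(V)$-action on $L^2(\mathcal{A}(V))$ from its restriction along $\mathcal{A}(h)$ to an $\mathcal{A}(U)$-action, and track how each compositor and each intertwiner interacts with both presentations simultaneously. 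Once this bookkeeping is set up uniformly, the remaining content of the pseudo double functor claim follows either from strict functoriality of $\mathcal{A}$ and $L^2$ or from the pseudo structure already present in $\mathbf{vNA}$.
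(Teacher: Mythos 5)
Your proposal is correct and follows essentially the same approach as the paper: strict vertical functoriality from functoriality of $\mathcal{A}$ and of $L^2$, horizontal compositor given by the canonical Connes fusion unitor $L^2(\mathcal{A}(V))\boxtimes_{\mathcal{A}(V)}(-)\cong(-)$ transported along restricted scalars, and coherence inherited from the pseudo structure of $\mathbf{vNA}$. Your explicit identification of $\mu_{h_2,h_1}$ with the left unitor and the attendant bookkeeping caveat about distinguishing the standard left action on $L^2(\mathcal{A}(V))$ from its restriction along $\mathcal{A}(h)$ make precise what the paper leaves implicit in the phrase ``fusion-compatibility built into $\mathbf{vNA}$.''
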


\begin{proof}
On objects and vertical morphisms, $\mathcal{F}_{\mathcal{A}}$ agrees with $\mathcal{A}$ (restricted to inclusions), hence preserves vertical identities and composition by functoriality of $\mathcal{A}$.

For each region $U$, the horizontal identity is sent to the horizontal unit in $\mathbf{vNA}$,
$$
\mathcal{F}_{\mathcal{A}}(\mathrm{id}_U)=L^2\bigl(\mathcal{A}(U)\bigr):\mathcal{A}(U)\rightsquigarrow \mathcal{A}(U).
$$
For composable admissible embeddings $U\xhookrightarrow{h}V\xhookrightarrow{k}W$, the correspondences
$\mathcal{F}_{\mathcal{A}}(h)$ and $\mathcal{F}_{\mathcal{A}}(k)$ are given by $L^2\bigl(\mathcal{A}(V)\bigr)$ and $L^2\bigl(\mathcal{A}(W)\bigr)$ with left actions restricted along $\mathcal{A}(h)$ and $\mathcal{A}(k)$, respectively. By the defining structure of $\mathbf{vNA}$ (Connes fusion with unit $L^2(\mathcal{A}(V))$ and compatibility with restriction of scalars), there is a canonical unitary isomorphism
$$
\mathcal{F}_{\mathcal{A}}(h)\boxtimes_{\mathcal{A}(V)}\mathcal{F}_{\mathcal{A}}(k)\cong \mathcal{F}_{\mathcal{A}}(k\circ h),
$$,
where the left action on the right-hand side is restricted along
$\mathcal{A}(k\circ h)=\mathcal{A}(k)\circ \mathcal{A}(h)$.

On squares, well-typedness holds: for a commuting spacetime square with right side $j$, the assigned operator is $L^2\bigl(\mathcal{A}(j)\bigr)$ and is bimodular for the induced boundary actions. Compatibility with vertical pasting follows from functoriality of $L^2$,
$$
L^2\bigl(\mathcal{A}(j_2)\bigr)\circ L^2\bigl(\mathcal{A}(j_1)\bigr)
=
L^2\bigl(\mathcal{A}(j_2\circ j_1)\bigr).
$$
Compatibility with horizontal pasting is the fusion-compatibility built into $\mathbf{vNA}$ (and the choice of $\mathbf{vN}_{\mathrm{vert}}$). Therefore the pseudo double functor axioms are satisfied.
\end{proof}

We have thus constructed $\mathcal{F}_{\mathcal{A}}$ from the AQFT input $\mathcal{A}$. Next we extract the underlying net $\mathcal{A}_{\mathcal{F}_{\mathcal{A}}}$ and state the Haag--Kastler axioms in this language.

We now pass from the double-functorial data to the underlying net of local algebras by restricting $\mathcal{A}$ to inclusions. This restricted functor is the Haag--Kastler net on which we state the axioms:

\begin{definition}[Haag--Kastler net]
Restrict $\mathcal{A}$ to inclusions to obtain a functor
$$
\mathcal{A}_{\mathcal{F}_{\mathcal{A}}}:\mathbf{Inc}(M)\longrightarrow \mathbf{vN}_{\mathrm{vert}},
\quad
\mathcal{A}_{\mathcal{F}_{\mathcal{A}}}(U)=\mathcal{A}(U),
\quad
\mathcal{A}_{\mathcal{F}_{\mathcal{A}}}(i)=\mathcal{A}(i).
$$
This is the net to which the Haag--Kastler axioms are applied.
\end{definition}

\textbf{Haag-Kastler Axioms.} We state the Haag--Kastler axioms: isotony, locality, covariance and time-slice in our double functorial formulation.

\begin{axiom}[\textbf{HK1 (Isotony)}]
For every inclusion $i:U\hookrightarrow V$, the morphism
$$
\mathcal{A}_{\mathcal{F}_{\mathcal{A}}}(i):\mathcal{A}_{\mathcal{F}_{\mathcal{A}}}(U)\longrightarrow \mathcal{A}_{\mathcal{F}_{\mathcal{A}}}(V)
$$
is an injective normal unital $*$-homomorphism.
\end{axiom}

\begin{claim}
If $\mathcal{A}(i)$ is injective for every inclusion $i$, then $\mathcal{A}_{\mathcal{F}_{\mathcal{A}}}$ satisfies HK1.
\end{claim}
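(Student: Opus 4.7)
The plan is a short definitional unfolding rather than a substantive argument. I would first unwind what $\mathcal{A}_{\mathcal{F}_{\mathcal{A}}}$ does on an arbitrary inclusion $i:U\hookrightarrow V$ in $\mathbf{Inc}(M)$. By the definition of the Haag--Kastler net above, the restricted functor $\mathcal{A}_{\mathcal{F}_{\mathcal{A}}}:\mathbf{Inc}(M)\to \mathbf{vN}_{\mathrm{vert}}$ is literally $\mathcal{A}$ restricted to the wide subcategory of inclusions, so $\mathcal{A}_{\mathcal{F}_{\mathcal{A}}}(i)=\mathcal{A}(i)$ as a morphism of $\mathbf{vN}_{\mathrm{vert}}$.

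Next I would observe that the target category $\mathbf{vN}_{\mathrm{vert}}$ was fixed so that every morphism in it is, by definition, a normal unital $*$-homomorphism equipped with a functorial standard-form assignment. Since the AQFT input functor $\mathcal{A}:\mathbf{Emb}(M)\to \mathbf{vN}_{\mathrm{vert}}$ lands in $\mathbf{vN}_{\mathrm{vert}}$ by hypothesis, the image $\mathcal{A}(i)$ already carries the normal, unital, and $*$-preserving properties with no additional verification required. The only remaining property needed for HK1 is injectivity, and that is exactly the standing hypothesis of the claim.

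Combining these observations, I would conclude that for each inclusion $i$, the morphism $\mathcal{A}_{\mathcal{F}_{\mathcal{A}}}(i)=\mathcal{A}(i)$ is simultaneously normal, unital, $*$-preserving (from membership in $\mathbf{vN}_{\mathrm{vert}}$) and injective (from the hypothesis), which is precisely the content of HK1. There is no genuine obstacle here: the claim is essentially a bookkeeping statement that verifies the restriction of a functor inherits property-level conditions from its domain and hypotheses. If anything, the only mildly delicate point is to be explicit that $\mathcal{A}_{\mathcal{F}_{\mathcal{A}}}$ is defined as the restriction of the same $\mathcal{A}$ (not of the double functor $\mathcal{F}_{\mathcal{A}}$, which also acts on squares and horizontal morphisms), so that the hypothesis on $\mathcal{A}(i)$ transfers without reinterpretation.
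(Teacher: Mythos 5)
Your proof is correct and takes essentially the same route as the paper: both simply unwind the definition $\mathcal{A}_{\mathcal{F}_{\mathcal{A}}}(i)=\mathcal{A}(i)$ and invoke the hypothesis. You are slightly more explicit than the paper in pointing out that the normal/unital/$*$-homomorphism clauses of HK1 are automatic from membership in $\mathbf{vN}_{\mathrm{vert}}$, and your closing remark distinguishing restriction of $\mathcal{A}$ from restriction of $\mathcal{F}_{\mathcal{A}}$ is a sensible clarification, but neither amounts to a different argument.
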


\begin{proof}
By definition of the extracted net,
$$
\mathcal{A}_{\mathcal{F}_{\mathcal{A}}}(i)=\mathcal{A}(i),
\quad
\mathcal{A}_{\mathcal{F}_{\mathcal{A}}}(U)=\mathcal{A}(U).
$$
Hence $\mathcal{A}_{\mathcal{F}_{\mathcal{A}}}(i)$ is injective whenever $\mathcal{A}(i)$ is.
\end{proof}

\begin{lemma}[Functorial isotony]
If $U\subseteq V\subseteq W$, then
$$
\mathcal{A}_{\mathcal{F}_{\mathcal{A}}}(U\hookrightarrow W)
=
\mathcal{A}_{\mathcal{F}_{\mathcal{A}}}(V\hookrightarrow W)\circ
\mathcal{A}_{\mathcal{F}_{\mathcal{A}}}(U\hookrightarrow V).
$$  
\end{lemma}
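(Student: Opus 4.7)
The statement is essentially a bookkeeping consequence of the fact that $\mathcal{A}_{\mathcal{F}_{\mathcal{A}}}$ was \emph{defined} as the restriction of $\mathcal{A}:\mathbf{Emb}(M)\to\mathbf{vN}_{\mathrm{vert}}$ to the wide subcategory $\mathbf{Inc}(M)\subseteq\mathbf{Emb}(M)$. So the strategy is to unfold the definition on both sides of the claimed equality and appeal to functoriality of the input $\mathcal{A}$.

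Concretely, I would proceed in three short steps. First, I would denote the three inclusions by $i:U\hookrightarrow V$, $j:V\hookrightarrow W$, and $k:U\hookrightarrow W$, and observe that $\mathbf{Inc}(M)$ is a subcategory of $\mathbf{Emb}(M)$ closed under composition, so the composite $j\circ i$ in $\mathbf{Emb}(M)$ is again an inclusion and coincides with $k$ as a morphism of $\mathbf{Inc}(M)$. Second, I would invoke functoriality of $\mathcal{A}$ on $\mathbf{Emb}(M)$, which was hypothesized in condition (b) of the AQFT input ($\mathcal{A}$ preserves identities and composition on admissible embeddings), to conclude $\mathcal{A}(k)=\mathcal{A}(j\circ i)=\mathcal{A}(j)\circ\mathcal{A}(i)$ in $\mathbf{vN}_{\mathrm{vert}}$. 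Third, I would translate this equality back through the definition $\mathcal{A}_{\mathcal{F}_{\mathcal{A}}}(-):=\mathcal{A}(-)$ on inclusions to obtain the stated identity.

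There is really no obstacle here beyond typing discipline: the only place one might stumble is conflating an inclusion viewed as a morphism in $\mathbf{Inc}(M)$ with the same inclusion viewed as a morphism in $\mathbf{Emb}(M)$, but since $\mathbf{Inc}(M)$ is by definition the wide subcategory of inclusions inside $\mathbf{Emb}(M)$, the two viewpoints agree on morphisms and on their composition. For the same reason, the analogous identity $\mathcal{A}_{\mathcal{F}_{\mathcal{A}}}(\mathrm{id}_U)=\mathrm{id}_{\mathcal{A}(U)}$ is immediate, so $\mathcal{A}_{\mathcal{F}_{\mathcal{A}}}$ is indeed a functor, and the lemma is just the composition clause of that functoriality specialized to a chain of two inclusions.
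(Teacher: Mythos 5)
Your proof is correct and takes essentially the same route as the paper: both reduce the claim to functoriality of $\mathcal{A}$, with the paper invoking functoriality of the restriction $\mathcal{A}_{\mathcal{F}_{\mathcal{A}}}:\mathbf{Inc}(M)\to\mathbf{vN}_{\mathrm{vert}}$ directly while you unfold the definition one step further to the functoriality hypothesis on $\mathcal{A}:\mathbf{Emb}(M)\to\mathbf{vN}_{\mathrm{vert}}$. The extra observation that $\mathbf{Inc}(M)$ is a wide subcategory closed under composition, so the two viewpoints on inclusions agree, is the right typing discipline and matches what the paper leaves implicit.
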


\begin{proof}
In $\mathbf{Inc}(M)$, write $i_{UV}:U\hookrightarrow V$, $i_{VW}:V\hookrightarrow W$, and $i_{UW}:U\hookrightarrow W$, so that $i_{UW}=i_{VW}\circ i_{UV}$. Since $\mathcal{A}_{\mathcal{F}_{\mathcal{A}}}:\mathbf{Inc}(M)\to \mathbf{vN}_{\mathrm{vert}}$ is a functor,
$$
\mathcal{A}_{\mathcal{F}_{\mathcal{A}}}(i_{UW})
=
\mathcal{A}_{\mathcal{F}_{\mathcal{A}}}(i_{VW}\circ i_{UV})
=
\mathcal{A}_{\mathcal{F}_{\mathcal{A}}}(i_{VW})\circ \mathcal{A}_{\mathcal{F}_{\mathcal{A}}}(i_{UV}).
$$
\end{proof}

Now we move on to locality. Locality is formulated using an orthogonality (causal disjointness) relation on inclusions into a common region. Since $\mathbf{Mink}(M)$ does not encode independence by additional generating squares, locality is imposed as an axiom on the extracted net.

Fix a symmetric relation $\perp$ on pairs of inclusions with common codomain, stable under postcomposition.

\begin{axiom}[\textbf{HK2 (Locality)}]
If $i_U:U\hookrightarrow T$ and $i_V:V\hookrightarrow T$ satisfy $i_U\perp i_V$, then the represented subalgebras commute in $B\bigl(L^2(\mathcal{A}(T))\bigr)$: for all
$a\in \mathcal{A}(U)$ and $b\in \mathcal{A}(V)$,
$$
\bigl[\lambda_T\bigl(\mathcal{A}(i_U)(a)\bigr),\ \lambda_T\bigl(\mathcal{A}(i_V)(b)\bigr)\bigr]=0,
$$
where $\lambda_T:\mathcal{A}(T)\to B\bigl(L^2(\mathcal{A}(T))\bigr)$ is the faithful standard-form left representation.
\end{axiom}

\begin{claim}
Under HK2, the net $\mathcal{A}_{\mathcal{F}_{\mathcal{A}}}$ is local: if $i_U\perp i_V$, then the subalgebras $\mathcal{A}(i_U)\bigl(\mathcal{A}(U)\bigr)$ and $\mathcal{A}(i_V)\bigl(\mathcal{A}(V)\bigr)$ commute inside $\mathcal{A}(T)$.
\end{claim}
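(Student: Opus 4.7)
The plan is to reduce the claim to the defining hypothesis of HK2 by pushing the commutator vanishing in $B\bigl(L^2(\mathcal{A}(T))\bigr)$ back down to $\mathcal{A}(T)$ via faithfulness of the standard left representation. So the entire content of the claim is that $\lambda_T$ is a faithful normal unital $*$-homomorphism, which is part of the Haagerup standard form data already invoked when defining $L^2(\mathcal{A}(T))$.

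First, I fix $a\in \mathcal{A}(U)$ and $b\in \mathcal{A}(V)$ and form the images $a':=\mathcal{A}(i_U)(a)\in \mathcal{A}(T)$ and $b':=\mathcal{A}(i_V)(b)\in \mathcal{A}(T)$; these lie in the subalgebras $\mathcal{A}(i_U)(\mathcal{A}(U))$ and $\mathcal{A}(i_V)(\mathcal{A}(V))$ respectively. Next, because $\lambda_T:\mathcal{A}(T)\to B(L^2(\mathcal{A}(T)))$ is a $*$-homomorphism, it preserves commutators, so
$$
\lambda_T([a',b']) = [\lambda_T(a'),\lambda_T(b')] = \bigl[\lambda_T(\mathcal{A}(i_U)(a)),\,\lambda_T(\mathcal{A}(i_V)(b))\bigr].
$$
By the hypothesis $i_U\perp i_V$ and HK2, the right-hand side vanishes in $B(L^2(\mathcal{A}(T)))$.

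Finally, I invoke faithfulness of $\lambda_T$: the standard-form left representation is injective (indeed a normal faithful unital $*$-representation), so $\lambda_T([a',b'])=0$ forces $[a',b']=0$ in $\mathcal{A}(T)$. Since $a\in \mathcal{A}(U)$ and $b\in \mathcal{A}(V)$ were arbitrary, every pair of elements in the two image subalgebras commutes inside $\mathcal{A}(T)$, which is exactly the statement that $\mathcal{A}(i_U)(\mathcal{A}(U))$ and $\mathcal{A}(i_V)(\mathcal{A}(V))$ commute in $\mathcal{A}(T)$.

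There is no real obstacle here; the only subtlety worth flagging is that HK2 is stated at the level of operators on $L^2(\mathcal{A}(T))$, whereas the conclusion is internal to $\mathcal{A}(T)$, so the proof must explicitly use faithfulness of $\lambda_T$ to cross that gap. This is why the axiom was formulated via the standard representation rather than inside $\mathcal{A}(T)$ to begin with: faithfulness is a property of the standard form assignment baked into the definition of $\mathbf{vN}_{\mathrm{vert}}$, so it is available here without extra hypotheses.
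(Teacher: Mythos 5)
Your proof is correct and matches the paper's argument: both use that $\lambda_T$ is a $*$-homomorphism to rewrite $[\lambda_T(\cdot),\lambda_T(\cdot)]$ as $\lambda_T([\cdot,\cdot])$, then invoke faithfulness of the standard-form left representation to conclude inside $\mathcal{A}(T)$. The only cosmetic difference is that you name preimages $a,b$ and push them forward, whereas the paper starts directly with elements $x,y$ of the image subalgebras.
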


\begin{proof}
Take $x\in \mathcal{A}(i_U)\bigl(\mathcal{A}(U)\bigr)$ and $y\in \mathcal{A}(i_V)\bigl(\mathcal{A}(V)\bigr)$. HK2 gives
$$
[\lambda_T(x),\lambda_T(y)]=0,
$$
hence
$$
\lambda_T([x,y])=0.
$$
Since $\lambda_T$ is faithful, $[x,y]=0$ in $\mathcal{A}(T)$.
\end{proof}

We now define covariance. Let $\mathbf{Iso}(M)\subseteq \mathbf{Emb}(M)$ be the groupoid of admissible isomorphisms, and restrict $\mathcal{A}$ to $\mathbf{Iso}(M)$.

\begin{axiom}[\textbf{HK3 (Covariance)}]
The restriction $\mathcal{A}\!\restriction_{\mathbf{Iso}(M)}$ lands in $*$-isomorphisms and is functorial on the groupoid.
\end{axiom}

\begin{claim}
Under HK3, the extracted net is covariant: for each isomorphism $g:U\to V$ in $\mathbf{Iso}(M)$, the map
$$
r_g:=\mathcal{A}(g):\mathcal{A}(U)\longrightarrow \mathcal{A}(V)
$$
is a $*$-isomorphism, and for composable $g_1:U\to V$ and $g_2:V\to W$ one has
$$
r_{g_2\circ g_1}=r_{g_2}\circ r_{g_1}.
$$
\end{claim}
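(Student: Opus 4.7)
The plan is to observe that both parts of the claim follow directly from HK3 together with the standing hypothesis that $\mathcal{A}$ is a functor on $\mathbf{Emb}(M)$. There is essentially no computation involved; the work lies in recognizing that HK3 and the functoriality hypothesis already supply exactly the two pieces the claim asks for.

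First I would unpack the $*$-isomorphism assertion. By definition $r_g := \mathcal{A}(g)$, and by HK3 the restriction $\mathcal{A}\!\restriction_{\mathbf{Iso}(M)}$ takes values in $*$-isomorphisms. Since $g \in \mathrm{Mor}(\mathbf{Iso}(M))$ by hypothesis, it follows immediately that $r_g$ is a $*$-isomorphism. For the composition identity I would invoke functoriality of $\mathcal{A}:\mathbf{Emb}(M) \to \mathbf{vN}_{\mathrm{vert}}$ on the composable pair $(g_1,g_2)$, giving $\mathcal{A}(g_2\circ g_1) = \mathcal{A}(g_2)\circ \mathcal{A}(g_1)$; translating along the definition $r_g = \mathcal{A}(g)$ yields $r_{g_2\circ g_1} = r_{g_2} \circ r_{g_1}$. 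HK3 additionally guarantees that both sides land in the subgroupoid of $*$-isomorphisms, so the identity is well-typed as an equality of morphisms in $\mathbf{Iso}(\mathbf{vN}_{\mathrm{vert}})$.

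The only subtle point, and the closest thing to an obstacle, is bookkeeping rather than substance: one must be careful that covariance is read off from the original $\mathcal{A}$ on the full category $\mathbf{Emb}(M)$, not from the extracted net $\mathcal{A}_{\mathcal{F}_{\mathcal{A}}}$, which by definition is restricted to $\mathbf{Inc}(M)$ and hence does not see the admissible isomorphisms directly. This is precisely the reason HK3 is stated in terms of $\mathcal{A}\!\restriction_{\mathbf{Iso}(M)}$ rather than in terms of the extracted net, and it mirrors the structural role of $\mathbf{Emb}(M)$ in the definition of $\mathcal{F}_{\mathcal{A}}$: the single functor $\mathcal{A}$ simultaneously houses the inclusion data that feeds isotony and the isomorphism data that feeds covariance, each accessed through a different restriction of its domain.
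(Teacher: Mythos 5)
Your proposal is correct and follows essentially the same route as the paper's proof: read the $*$-isomorphism property directly off HK3, and obtain the composition identity from functoriality of $\mathcal{A}$ (the paper phrases this as functoriality on the groupoid $\mathbf{Iso}(M)$, which HK3 explicitly supplies, while you invoke functoriality on the ambient $\mathbf{Emb}(M)$; these amount to the same thing after restriction). Your closing remark distinguishing $\mathcal{A}$ from the extracted net $\mathcal{A}_{\mathcal{F}_{\mathcal{A}}}$ is a useful clarification of why HK3 is stated the way it is, but it does not change the substance of the argument.
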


\begin{proof}
Functoriality of $\mathcal{A}$ on the groupoid $\mathbf{Iso}(M)$ gives
$
\mathcal{A}(g_2\circ g_1)=\mathcal{A}(g_2)\circ \mathcal{A}(g_1),
$
which is exactly the displayed coherence identity.
\end{proof}

Since $\mathcal{A}_{\mathcal{F}_{\mathcal{A}}}$ is indexed by inclusions, time-slice is formulated for a chosen class of Cauchy inclusions. Let $S$ be a specified class of inclusions $i:U\hookrightarrow V$ such that $U$ contains a Cauchy surface for $V$.

\begin{axiom}[\textbf{HK4 (Time-slice)}]
If $i\in S$, then $\mathcal{A}(i):\mathcal{A}(U)\to \mathcal{A}(V)$ is an isomorphism.
\end{axiom}

\begin{claim}
Under HK4, the extracted net satisfies time-slice: if $i:U\hookrightarrow V$ lies in $S$, then
$
\mathcal{A}_{\mathcal{F}_{\mathcal{A}}}(U)\cong \mathcal{A}_{\mathcal{F}_{\mathcal{A}}}(V).
$
\end{claim}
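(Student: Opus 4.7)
The plan is to proceed by directly unpacking the definition of $\mathcal{A}_{\mathcal{F}_{\mathcal{A}}}$ and applying HK4. Since there is no additional structure to verify beyond producing an isomorphism of von Neumann algebras, the argument is essentially one of identification rather than genuine construction.

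First I would recall that the extracted net is defined by $\mathcal{A}_{\mathcal{F}_{\mathcal{A}}}(U):=\mathcal{A}(U)$ on objects and $\mathcal{A}_{\mathcal{F}_{\mathcal{A}}}(i):=\mathcal{A}(i)$ on inclusions, so the image of a Cauchy inclusion $i\in S$ under the extracted net coincides on the nose with $\mathcal{A}(i):\mathcal{A}(U)\to \mathcal{A}(V)$. Then I would invoke HK4, which asserts that $\mathcal{A}(i)$ is an isomorphism of von Neumann algebras in $\mathbf{vN}_{\mathrm{vert}}$, to conclude immediately that $\mathcal{A}_{\mathcal{F}_{\mathcal{A}}}(i)$ is itself an isomorphism, yielding the desired $\mathcal{A}_{\mathcal{F}_{\mathcal{A}}}(U)\cong \mathcal{A}_{\mathcal{F}_{\mathcal{A}}}(V)$.

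The only point that is not wholly automatic is to note that $\mathbf{vN}_{\mathrm{vert}}$ was chosen so that its morphisms are closed under the standard-form assignment and admit inverses when they are algebra-isomorphisms; thus any isomorphism in the ambient $*$-algebra sense is in particular an isomorphism in $\mathbf{vN}_{\mathrm{vert}}$, so the identification above takes place inside the intended vertical category. This is the one line where the compatibility between the axiom's statement and the typing of the extracted net needs to be checked.

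The main obstacle, to the extent there is one, is therefore not mathematical but bookkeeping: ensuring that the isomorphism produced by HK4 is recognized as a morphism in the same subcategory $\mathbf{vN}_{\mathrm{vert}}$ through which $\mathcal{A}_{\mathcal{F}_{\mathcal{A}}}$ factors, so that the asserted isomorphism of $\mathcal{A}_{\mathcal{F}_{\mathcal{A}}}(U)$ and $\mathcal{A}_{\mathcal{F}_{\mathcal{A}}}(V)$ really lives in the target category of the net and not merely in some larger ambient category of von Neumann algebras. Once this is noted, the proof reduces to a one-line appeal to the definition of the extracted net and the content of HK4.
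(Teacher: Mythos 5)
Your proof is correct and takes essentially the same route as the paper: unpack the definition $\mathcal{A}_{\mathcal{F}_{\mathcal{A}}}(i)=\mathcal{A}(i)$ and cite HK4 directly. Your extra remark about checking that the inverse lives in $\mathbf{vN}_{\mathrm{vert}}$ is a reasonable bookkeeping observation that the paper's two-line proof silently elides, but it is not a different argument.
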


\begin{proof}
By definition of the extracted net, $\mathcal{A}_{\mathcal{F}_{\mathcal{A}}}(i)=\mathcal{A}(i)$. Thus $\mathcal{A}_{\mathcal{F}_{\mathcal{A}}}(i)$ is an isomorphism whenever $\mathcal{A}(i)$ is.
\end{proof}

\begin{axiom}[\textbf{HK5 (Additivity)}]
For a cover $U=\bigcup_\alpha U_\alpha$ by admissible subregions with inclusions $i_\alpha:U_\alpha\hookrightarrow U$,
$$
\mathcal{A}(U)=\bigvee_\alpha \,\mathcal{A}(i_\alpha)\bigl(\mathcal{A}(U_\alpha)\bigr),
$$
where $\bigvee$ denotes the von Neumann algebra generated inside $\mathcal{A}(U)$ by the indicated subalgebras.
\end{axiom}

We now turn to structural properties of the double functor and the underlying net.

\section{Structural Properties of the AQFT Double Functor}\label{sec:structural}

This section proves a few structural statements whose clean formulation uses the square-level data of
$$
\mathcal{F}_{\mathcal{A}}:\mathbf{Mink}(M)\longrightarrow \mathbf{vNA},
$$
together with basic properties of the extracted Haag-Kastler net
$
\mathcal{A}_{\mathcal{F}_{\mathcal{A}}}=\mathcal{A}\!\restriction_{\mathbf{Inc}(M)}.
$
Throughout, $\mathbf{Mink}(M)$, $\mathbf{vNA}$, the AQFT input $\mathcal{A}:\mathbf{Emb}(M)\to \mathbf{vN}_{\mathrm{vert}}$, and the construction of $\mathcal{F}_{\mathcal{A}}$ are as in \S\ref{sec:double}: horizontals are given by restriction of scalars on $L^2$ and a spacetime square with right vertical map $j$ is carried by the operator $L^2\bigl(\mathcal{A}(j)\bigr)$, viewed as an intertwiner for the induced boundary actions.

\textbf{Uniqueness from $\gamma$-generation.}
As already noted, $\mathbf{Mink}(M)$ is generally not globularly generated in a geometrically interesting way: in fact $G\bigl(\mathbf{Mink}(M)\bigr)$ is essentially just identity squares on horizontal morphisms. Still, the $\gamma$-construction is a handy bookkeeping device for uniqueness-from-generators statements about squares.

Set
$$
\mathbf{Mink}_\gamma(M):=\gamma\bigl(\mathbf{Mink}(M)\bigr).
$$
Concretely, $\mathbf{Mink}_\gamma(M)$ has the same objects, vertical morphisms, and horizontal morphisms as $\mathbf{Mink}(M)$, and its squares form the smallest class of squares containing all globular squares and all unit squares, and closed under horizontal and vertical composition (hence under whiskering).

Recall: for a pseudo double category $\mathbb{C}$, write $G(\mathbb{C})$ for its class of globular squares, and for each vertical morphism $f\in \mathrm{Mor}(\mathbb{C}_0)$ write $\iota_f:=U(f)$ for the unit square. Then the $0$-marked squares are
$$
G_{\mathbb{C}}
:=
G(\mathbb{C}) \,\cup\, \{\iota_f \mid f\in \mathrm{Mor}(\mathbb{C}_0)\}.
$$

Whiskering is just composition with identity squares, so it is automatically present once one has closure under horizontal composition. Concretely, if $\alpha:H\Rightarrow K$ is a square with vertical boundary $(f,g)$ and $R$ is horizontally composable on the right, then the right whisker $\alpha\odot R$ is the pasted square obtained by horizontally composing $\alpha$ with the identity square on $R$ (and similarly on the left).

\begin{proposition}[Uniqueness from generators on $\mathbf{Mink}_\gamma(M)$]\label{prop:gamma-uniq}
Let $F,G:\mathbf{Mink}_\gamma(M)\to \mathbf{vNA}$ be double functors. Assume:
\begin{enumerate}[label=\alph*)]
\item $F$ and $G$ agree on objects, on vertical morphisms, and on horizontal morphisms; and
\item $F(\sigma)=G(\sigma)$ for every generating square $\sigma\in G_{\mathbf{Mink}(M)}$ (globular squares and unit squares).
\end{enumerate}
Then $F=G$ on all squares, hence $F=G$ as double functors.
\end{proposition}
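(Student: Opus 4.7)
The plan is a structural induction on the generation of squares of $\mathbf{Mink}_\gamma(M)$. By definition of the $\gamma$-construction, every such square lies in the smallest class that contains the $0$-marked squares
\[
G_{\mathbf{Mink}(M)}=G(\mathbf{Mink}(M))\cup\{\iota_f\mid f\in \mathrm{Mor}(\mathbf{Inc}(M))\}
\]
and is closed under vertical and horizontal composition; whiskering is absorbed into horizontal composition with identity squares on horizontal morphisms. It therefore suffices to show that the agreement set $S:=\{\sigma : F(\sigma)=G(\sigma)\}$ contains all generators and is closed under both composition operations. Hypothesis (b) supplies the base case, so the content of the proof is entirely in the two closure checks, carried out in the same order that $\gamma$ builds squares from generators.

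I would dispatch the vertical closure first, as it is routine. Vertical composition of squares is strict composition in $\mathbb{D}_1$ on either side, and $F$ and $G$ are ordinary functors on those categories, so $F(\beta\circ_v\alpha)=F(\beta)\circ_v F(\alpha)=G(\beta)\circ_v G(\alpha)=G(\beta\circ_v\alpha)$ whenever $\alpha,\beta\in S$ are vertically composable. Agreement on objects and vertical morphisms from hypothesis (a) makes the source and target boundaries of the squares literally the same, so composability in the target is automatic once it holds in the source, and no coherence data interferes.

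The main obstacle will be the horizontal step, because $\mathbf{vNA}$ is pseudo horizontally: a pseudo double functor carries globular coherence isomorphisms $\phi^F_{H,K}:F(H)\odot F(K)\to F(H\odot K)$ and unit comparators, and horizontal functoriality on squares holds only up to conjugation by these structure cells. To push $F(\alpha)=G(\alpha)$ and $F(\beta)=G(\beta)$ through the horizontal composite one first has to establish $\phi^F_{H,K}=\phi^G_{H,K}$. My approach is to observe that each $\phi^F_{H,K}$ is a globular square in $\mathbf{vNA}$ whose source $F(H)\odot F(K)$, target $F(H\odot K)$, and identity vertical boundary are entirely fixed by the agreement of $F$ and $G$ on horizontal morphisms from (a), and whose value is then pinned down by the unit comparators (forced by (b) on the unit squares $\iota_f$) together with the pentagon and triangle coherence axioms; the identical determination applies to $G$, yielding $\phi^G=\phi^F$. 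Once the coherence is matched, the pseudo-functor identity $F(\alpha\odot\beta)=\phi^F\circ(F(\alpha)\odot F(\beta))\circ(\phi^F)^{-1}$ and its $G$-counterpart collapse to $F(\alpha\odot\beta)=G(\alpha\odot\beta)$, closing $S$ horizontally.

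Combining the base case with closure under both compositions gives $S=\mathrm{Mor}(\mathbf{Mink}_\gamma(M)_1)$, and together with assumption (a) this upgrades to the equality $F=G$ of pseudo double functors.
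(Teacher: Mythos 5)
Your overall strategy is the same as the paper's: let $S$ be the agreement set, observe (b) puts the generators $G_{\mathbf{Mink}(M)}$ in $S$, show $S$ is closed under vertical and horizontal pasting, and conclude by the defining property of $\gamma$-generation. The vertical closure step matches the paper exactly. The divergence is in the horizontal step, where you flag a genuine subtlety the paper silently suppresses: $\mathbf{vNA}$ is horizontally pseudo, so if ``double functor'' means \emph{pseudo} double functor, one has $F(\alpha\odot\beta)=\phi^F\circ(F(\alpha)\odot F(\beta))\circ(\phi^F)^{-1}$ rather than the strict identity the paper invokes. The paper's displayed line $F(\beta\odot\alpha)=F(\beta)\odot F(\alpha)$ is literally correct only for \emph{strict} double functors, so either the paper tacitly restricts to strict functors (in which case your detour is unnecessary) or it has a small gap of its own.

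However, your proposed repair does not actually close the gap. You claim that $\phi^F_{H,K}$ is ``pinned down'' by the unit comparators forced by (b) together with the pentagon and triangle axioms, and hence $\phi^F=\phi^G$. This is not true. The compositors of a pseudo (double) functor are \emph{independent structure}, constrained but not determined by the coherence axioms; agreement of $F$ and $G$ on objects, horizontals, vertical morphisms, globular squares in the source, and unit squares in the source does not in general determine the globular 2-cells $\phi^F_{H,K}:F(H)\odot F(K)\to F(H\odot K)$ in the \emph{target}. (Note that hypothesis (b) concerns squares of $\mathbf{Mink}_\gamma(M)$, not coherence cells of $\mathbf{vNA}$.) If the intended meaning is pseudo double functors, the statement should carry the extra hypothesis $\phi^F=\phi^G$ (and equality of unit comparators), or one must restrict to strict functors as the paper's calculation implicitly does. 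Your plan would be correct once you either add that hypothesis explicitly or delete the coherence discussion and declare the functors strict, matching the paper.
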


\begin{proof}
Let $S$ be the class of squares $\alpha$ in $\mathbf{Mink}_\gamma(M)$ such that $F(\alpha)=G(\alpha)$.
By assumption, $S$ contains $G_{\mathbf{Mink}(M)}$.

Since $F$ and $G$ are double functors, they preserve vertical composition of squares and horizontal composition of squares. Hence $S$ is closed under both vertical and horizontal composition: if $\alpha,\beta\in S$ are composable vertically (resp.\ horizontally), then
$$
F(\beta\circ \alpha)=F(\beta)\circ F(\alpha)=G(\beta)\circ G(\alpha)=G(\beta\circ \alpha),
$$
$$
F(\beta\odot \alpha)=F(\beta)\odot F(\alpha)=G(\beta)\odot G(\alpha)=G(\beta\odot \alpha).
$$
By definition of $\gamma\bigl(\mathbf{Mink}(M)\bigr)$, every square of $\mathbf{Mink}_\gamma(M)$ lies in the closure of $G_{\mathbf{Mink}(M)}$ under vertical and horizontal composition. Therefore every square lies in $S$, i.e.\ $F$ and $G$ agree on all squares.
\end{proof}

\textit{Remark.}
Proposition \ref{prop:gamma-uniq} is only a uniqueness-from-generators statement about the \emph{square assignment}. We use it as a bookkeeping tool (especially when a construction is forced on globular/unit squares and then extended by pasting), not as a claim that $\mathbf{Mink}_\gamma(M)$ captures spacetime geometry.

\textbf{(Co)limits inside an ambient algebra: unions as joins under additivity.}
In practice one rarely takes categorical pushouts/pullbacks in $\mathbf{vN}$. Instead one works in the lattice of von Neumann subalgebras of a fixed ambient algebra. The functor $\mathcal{A}$ supplies a canonical ambient algebra $\mathcal{A}(T)$ for each region $T$, so it is natural to package ``gluing inside $\mathcal{A}(T)$'' as a statement in the poset of subalgebras.

Fix $T\in \mathcal{R}(M)$. Let $\mathbf{vSub}\bigl(\mathcal{A}(T)\bigr)$ denote the thin category (poset) of von Neumann subalgebras of $\mathcal{A}(T)$, with a unique morphism $B\to C$ precisely when $B\subseteq C$.

Let $\mathbf{Inc}(M)_{\le T}$ be the thin category whose objects are inclusions $i_{U,T}:U\hookrightarrow T$ and with a unique morphism $i_{U,T}\to i_{V,T}$ exactly when $U\subseteq V\subseteq T$. Define a functor
$$
\widetilde{\mathcal{A}}_T:\mathbf{Inc}(M)_{\le T}\longrightarrow \mathbf{vSub}\bigl(\mathcal{A}(T)\bigr),
\qquad
\widetilde{\mathcal{A}}_T(U):=\mathcal{A}(i_{U,T})\bigl(\mathcal{A}(U)\bigr)\subseteq \mathcal{A}(T).
$$

Assume HK5 (Additivity): for any admissible cover $W=\bigcup_\alpha W_\alpha$ with inclusions $i_\alpha:W_\alpha\hookrightarrow W$ one has
$$
\mathcal{A}(W)=\bigvee_\alpha \mathcal{A}(i_\alpha)\bigl(\mathcal{A}(W_\alpha)\bigr),
$$
where $\vee$ denotes the von Neumann algebra generated inside $\mathcal{A}(W)$.

\begin{lemma}[Binary unions become joins under HK5]\label{lem:union-join}
Suppose $U,V,U\cup V\in \mathcal{R}(M)$ and $U,V\subseteq T$. Then, inside $\mathcal{A}(T)$,
$$
\widetilde{\mathcal{A}}_T(U\cup V)=\widetilde{\mathcal{A}}_T(U)\,\vee\,\widetilde{\mathcal{A}}_T(V).
$$
\end{lemma}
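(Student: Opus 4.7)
The plan is to apply HK5 to the binary cover $W := U\cup V$ of itself and then transport the resulting identity inside $\mathcal{A}(W)$ forward to $\mathcal{A}(T)$ along the inclusion $i_{W,T}:W\hookrightarrow T$. Concretely, let $i_{U,W}:U\hookrightarrow W$ and $i_{V,W}:V\hookrightarrow W$ denote the inclusions into the union. HK5 applied to the cover $W=U\cup V$ produces
$$
\mathcal{A}(W)=\mathcal{A}(i_{U,W})\bigl(\mathcal{A}(U)\bigr)\,\vee\,\mathcal{A}(i_{V,W})\bigl(\mathcal{A}(V)\bigr)
$$
inside $\mathcal{A}(W)$. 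This is the only physical input; the rest is functoriality plus a standard normality argument.

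Next I would push this identity forward under $\varphi:=\mathcal{A}(i_{W,T}):\mathcal{A}(W)\to \mathcal{A}(T)$. By the definition of $\mathbf{vN}_{\mathrm{vert}}$, $\varphi$ is a normal unital $*$-homomorphism, and functoriality on $\mathbf{Inc}(M)$ (Lemma on functorial isotony) gives $\varphi\circ \mathcal{A}(i_{U,W})=\mathcal{A}(i_{U,T})$ and $\varphi\circ \mathcal{A}(i_{V,W})=\mathcal{A}(i_{V,T})$, so that applying $\varphi$ to the two generators recovers exactly $\widetilde{\mathcal{A}}_T(U)$ and $\widetilde{\mathcal{A}}_T(V)$. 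Thus what remains is the identity
$$
\varphi\bigl(N_U\vee N_V\bigr)\;=\;\varphi(N_U)\,\vee\,\varphi(N_V),
$$
where $N_U:=\mathcal{A}(i_{U,W})(\mathcal{A}(U))$ and $N_V:=\mathcal{A}(i_{V,W})(\mathcal{A}(V))$.

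The main (and essentially only) obstacle is this last compatibility of images with joins, which is where normality of $\varphi$ must be used: the inclusion $\varphi(N_U)\vee \varphi(N_V)\subseteq \varphi(N_U\vee N_V)$ is automatic since $\varphi(N_U\vee N_V)$ is a von Neumann subalgebra of $\mathcal{A}(T)$ (here I use that the image of a von Neumann algebra under a normal $*$-homomorphism is $\sigma$-weakly closed) containing both $\varphi(N_U)$ and $\varphi(N_V)$. For the reverse inclusion, I would consider $Q:=\varphi(N_U)\vee \varphi(N_V)\subseteq \mathcal{A}(T)$, observe that $\varphi^{-1}(Q)\cap (N_U\vee N_V)$ is a $\sigma$-weakly closed unital $*$-subalgebra of $N_U\vee N_V$ (by $\sigma$-weak continuity of $\varphi$) containing the $*$-algebra generated by $N_U\cup N_V$, hence equals all of $N_U\vee N_V$. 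Applying $\varphi$ gives $\varphi(N_U\vee N_V)\subseteq Q$.

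Finally I would note that the join $\widetilde{\mathcal{A}}_T(U)\vee \widetilde{\mathcal{A}}_T(V)$ computed inside the subalgebra $\varphi(\mathcal{A}(W))\subseteq \mathcal{A}(T)$ coincides with the join computed inside $\mathcal{A}(T)$, since $\varphi(\mathcal{A}(W))$ is itself $\sigma$-weakly closed in $\mathcal{A}(T)$. Combining the HK5 identity inside $\mathcal{A}(W)$ with the displayed compatibility and functoriality yields
$$
\widetilde{\mathcal{A}}_T(U\cup V)=\varphi\bigl(\mathcal{A}(W)\bigr)=\varphi(N_U)\vee \varphi(N_V)=\widetilde{\mathcal{A}}_T(U)\vee \widetilde{\mathcal{A}}_T(V),
$$
as required. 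The only nontrivial step is the normality argument; everything else is pure bookkeeping via functoriality of $\mathcal{A}$ on inclusions.
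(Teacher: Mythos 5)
Your proof is correct and follows the same route as the paper's: apply HK5 to the binary cover of $U\cup V$, then push forward along the inclusion $U\cup V\hookrightarrow T$ using functoriality. The paper states the pushforward step as if it were immediate; your explicit $\sigma$-weak-continuity argument that a normal $*$-homomorphism sends the join $N_U\vee N_V$ to $\varphi(N_U)\vee\varphi(N_V)$ is the detail the paper leaves implicit, and you have filled it in correctly.
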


\begin{proof}
Consider the factorisations of inclusions
$$
U \xrightarrow{i_{U,U\cup V}} U\cup V \xrightarrow{i_{U\cup V,T}} T,
\qquad
V \xrightarrow{i_{V,U\cup V}} U\cup V \xrightarrow{i_{U\cup V,T}} T.
$$
Functoriality of $\mathcal{A}$ gives
$$
\mathcal{A}(i_{U,T})=\mathcal{A}(i_{U\cup V,T})\circ \mathcal{A}(i_{U,U\cup V}),
\qquad
\mathcal{A}(i_{V,T})=\mathcal{A}(i_{U\cup V,T})\circ \mathcal{A}(i_{V,U\cup V}).
$$
Apply HK5 to the cover $U\cup V = U\cup V$ (two-set cover by $U$ and $V$). Then $\mathcal{A}(U\cup V)$ is generated by the images of $\mathcal{A}(U)$ and $\mathcal{A}(V)$ in $\mathcal{A}(U\cup V)$ under $\mathcal{A}(i_{U,U\cup V})$ and $\mathcal{A}(i_{V,U\cup V})$. Applying $\mathcal{A}(i_{U\cup V,T})$ and using the two displayed factorisations yields
$$
\mathcal{A}(i_{U\cup V,T})\bigl(\mathcal{A}(U\cup V)\bigr)
=
\mathcal{A}(i_{U,T})\bigl(\mathcal{A}(U)\bigr)\,\vee\,\mathcal{A}(i_{V,T})\bigl(\mathcal{A}(V)\bigr)
\subseteq \mathcal{A}(T).
$$
By definition of $\widetilde{\mathcal{A}}_T$, this is exactly the claimed identity.
\end{proof}

\textbf{Inclusion and truncation.}
A standard move in AQFT is to enlarge the region class so that time-slice is ``local'' (or easier to impose), and then restrict back to a preferred region class. We record a clean formal package for this.

Fix:
\begin{enumerate}[label=\alph*)]
\item a region class $\mathcal{R}(M)$ and a larger region class $\mathcal{R}_{\mathrm{cau}}(M)$;
\item a strict inclusion functor of inclusion categories
$$
\iota:\mathbf{Inc}(M)\hookrightarrow \mathbf{Inc}_{\mathrm{cau}}(M);
$$
\item a functor (a ``truncation'')
$$
\tau:\mathbf{Inc}_{\mathrm{cau}}(M)\longrightarrow \mathbf{Inc}(M)
\quad\text{with}\quad
\tau\circ \iota=\mathrm{id}_{\mathbf{Inc}(M)};
$$
\item for each $U\in \mathcal{R}_{\mathrm{cau}}(M)$, a specified inclusion
$$
c_U:\tau(U)\hookrightarrow U
$$
which lies in the chosen class of Cauchy inclusions used in HK4.
\end{enumerate}

\textit{Remark.}
If $\iota$ and $\tau$ extend compatibly to the embedding categories (so that admissible embeddings are sent to admissible embeddings and commutative squares are preserved), then they induce double functors
$$
\iota_*:\mathbf{Mink}(M)\hookrightarrow \mathbf{Mink}_{\mathrm{cau}}(M),
\qquad
\tau_*:\mathbf{Mink}_{\mathrm{cau}}(M)\longrightarrow \mathbf{Mink}(M),
$$
with $\tau_*\circ \iota_*=\mathrm{id}$ strictly. We will only use the vertical part of this story in what follows.

\begin{lemma}[Adjunction criterion in thin categories]\label{lem:thin-adjunction}
Assume $\mathbf{Inc}(M)$ and $\mathbf{Inc}_{\mathrm{cau}}(M)$ are thin categories. Then $\tau\dashv \iota$ if and only if for all $U\in \mathcal{R}_{\mathrm{cau}}(M)$ and $V\in \mathcal{R}(M)$,
$$
\tau(U)\subseteq V \quad\Longleftrightarrow\quad U\subseteq \iota(V).
$$
\end{lemma}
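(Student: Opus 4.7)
The plan is to reduce the adjunction $\tau \dashv \iota$ to its hom-set formulation and then exploit thinness to collapse the hom-set bijection to the stated biconditional. This is the standard recovery of a Galois connection from the definition of adjoint functors between posets, so I expect the argument to be almost entirely bookkeeping once the right reformulation is in place.

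First I would recall that $\tau \dashv \iota$ is equivalent to a family of bijections
$$
\mathrm{Hom}_{\mathbf{Inc}(M)}\bigl(\tau(U),\, V\bigr) \;\cong\; \mathrm{Hom}_{\mathbf{Inc}_{\mathrm{cau}}(M)}\bigl(U,\, \iota(V)\bigr),
$$
natural in $U\in \mathcal{R}_{\mathrm{cau}}(M)$ and $V\in \mathcal{R}(M)$. Under the thinness hypothesis, the left-hand hom-set is a singleton when $\tau(U)\subseteq V$ and empty otherwise, and similarly on the right. A bijection between two sets of cardinality at most one exists iff both are empty or both are nonempty, so at each $(U,V)$ the existence of the adjunction bijection is exactly the biconditional $\tau(U)\subseteq V \iff U\subseteq \iota(V)$.

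For the forward direction, the adjunction hom-set isomorphism at $(U,V)$ directly supplies the biconditional. For the backward direction, I would construct the unit $\eta_U:U\to \iota\tau(U)$ by setting $V:=\tau(U)$ and applying the biconditional to the tautology $\tau(U)\subseteq \tau(U)$; dually, the counit $\varepsilon_V:\tau\iota(V)\to V$ comes from $\iota(V)\subseteq \iota(V)$. Naturality of $\eta$ and $\varepsilon$, together with the triangle identities, then hold automatically because every diagram of arrows in a thin category commutes.

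There is no substantive obstacle here; the only things to verify carefully are that $\tau$ and $\iota$ are genuine functors of the thin categories involved (so that the comparisons $\tau(U)$ and $\iota(V)$ are well-defined on objects and respect the inclusion order), and that the biconditional is read objectwise at each $(U,V)$ rather than as a global statement. Beyond these bookkeeping points, the lemma is exactly the classical identification of adjoint functors between posets with Galois connections.
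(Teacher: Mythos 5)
Your proof is correct and follows essentially the same route as the paper: reduce the adjunction to its hom-set formulation, then observe that in thin categories a bijection between hom-sets of cardinality at most one exists iff both are empty or both are nonempty. Your additional remark that naturality and the triangle identities hold for free in thin categories is a helpful precision the paper leaves implicit, but it does not change the argument.
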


\begin{proof}
In a thin category, $\mathrm{Hom}(X,Y)$ is either empty or a singleton, and it is nonempty exactly when $X\le Y$. An adjunction $\tau\dashv \iota$ is equivalent to natural bijections
$$
\mathrm{Hom}\bigl(\tau(U),V\bigr)\cong \mathrm{Hom}\bigl(U,\iota(V)\bigr).
$$
Such a bijection exists if and only if the two hom-sets are simultaneously empty or simultaneously nonempty, i.e.\ $\tau(U)\le V$ iff $U\le \iota(V)$.
\end{proof}

\textbf{Functoriality of $\mathcal{A}\mapsto \mathcal{F}_{\mathcal{A}}$ on morphisms.}
Let $\mathcal{A},\mathcal{B}:\mathbf{Emb}(M)\to \mathbf{vN}_{\mathrm{vert}}$ be AQFT inputs and let $\eta:\mathcal{A}\Rightarrow \mathcal{B}$ be a natural transformation. Define components
\begin{enumerate}[label=\alph*)]
\item on each region $U$,
$$
(\mathcal{F}_\eta)_U:=\eta_U:\mathcal{A}(U)\longrightarrow \mathcal{B}(U);
$$
\item on each admissible embedding $h:U\hookrightarrow V$ (a horizontal arrow of $\mathbf{Mink}(M)$),
$$
(\mathcal{F}_\eta)_h:=L^2(\eta_V):L^2\bigl(\mathcal{A}(V)\bigr)\longrightarrow L^2\bigl(\mathcal{B}(V)\bigr).
$$
\end{enumerate}

\begin{proposition}[Functoriality on 1-morphisms]\label{prop:functoriality-A-to-F}
The data $\mathcal{F}_\eta$ define a vertical transformation
$$
\mathcal{F}_\eta:\mathcal{F}_{\mathcal{A}}\Rightarrow \mathcal{F}_{\mathcal{B}}
$$
in the following explicit sense:
\begin{enumerate}[label=\alph*)]
\item for each horizontal arrow $h:U\hookrightarrow V$, the map $L^2(\eta_V)$ is an intertwiner
$$
(\mathcal{F}_\eta)_h:\mathcal{F}_{\mathcal{A}}(h)\longrightarrow \mathcal{F}_{\mathcal{B}}(h)
$$
whose left and right boundary maps are $(\eta_U,\eta_V)$; and
\item these components are natural with respect to every commuting square in $\mathbf{Mink}(M)$.
\end{enumerate}
Moreover, $\eta\mapsto \mathcal{F}_\eta$ respects identities and composition:
$$
\mathcal{F}_{\mathrm{id}_{\mathcal{A}}}=\mathrm{id}_{\mathcal{F}_{\mathcal{A}}},
\qquad
\mathcal{F}_{\theta\circ \eta}=\mathcal{F}_\theta\circ \mathcal{F}_\eta.
$$
\end{proposition}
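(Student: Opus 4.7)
The plan is to verify the three claims of the proposition in sequence: the well-typedness of each component $(\mathcal{F}_\eta)_h$ as a square in $\mathbf{vNA}$ with boundary $(\eta_U,\eta_V)$, the naturality of these components over commuting spacetime squares, and the functoriality in $\eta$. In every case, the key input will be the naturality square of $\eta$ combined with functoriality of the standard-form assignment $L^2$ on $\mathbf{vN}_{\mathrm{vert}}$; no further operator-algebraic input will be needed, and most of the proof is a disciplined translation between restricted-action module structures.

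For clause (a), I would first note that both $\mathcal{F}_{\mathcal{A}}(h)$ and $\mathcal{F}_{\mathcal{B}}(h)$ carry the standard right actions on their underlying Hilbert spaces $L^2(\mathcal{A}(V))$ and $L^2(\mathcal{B}(V))$, so the right-intertwining identity for the standard-form map $L^2(\eta_V)$ supplies the right-boundary identity with boundary map $\eta_V$ immediately. The left boundary is the substantive check, because the left actions on the two correspondences are restricted along two different $\ast$-homomorphisms, namely $\mathcal{A}(h)$ and $\mathcal{B}(h)$. My plan is to apply the left-intertwining identity of $L^2(\eta_V)$ at the element $\mathcal{A}(h)(a)\in \mathcal{A}(V)$, producing a term featuring $\lambda_V^{\mathcal{B}}(\eta_V(\mathcal{A}(h)(a)))$, and then use the naturality square $\eta_V\circ \mathcal{A}(h)=\mathcal{B}(h)\circ \eta_U$ to rewrite this as $\lambda_V^{\mathcal{B}}(\mathcal{B}(h)(\eta_U(a)))$, which is the restricted left action on $\mathcal{F}_{\mathcal{B}}(h)$ applied to $\eta_U(a)$. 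That identifies the left boundary as $\eta_U$ and finishes the well-typing of $(\mathcal{F}_\eta)_h$ as a square.

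For clause (b), on a commuting spacetime square with right vertical $j:V\hookrightarrow V'$, the squares $\mathcal{F}_{\mathcal{A}}(\Box)$ and $\mathcal{F}_{\mathcal{B}}(\Box)$ are carried respectively by $L^2(\mathcal{A}(j))$ and $L^2(\mathcal{B}(j))$, so the required pasting identity reduces via functoriality of $L^2$ to the single operator identity $L^2(\mathcal{B}(j)\circ \eta_V)=L^2(\eta_{V'}\circ \mathcal{A}(j))$, which is just $L^2$ applied to the naturality of $\eta$ at $j$. The functoriality clause is then component-wise: $\mathcal{F}_{\mathrm{id}_{\mathcal{A}}}$ is the identity vertical transformation because $L^2(\mathrm{id})=\mathrm{id}$, and composition is handled by $L^2(\theta_V\circ \eta_V)=L^2(\theta_V)\circ L^2(\eta_V)$, both immediate from $L^2$ being a functor on $\mathbf{vN}_{\mathrm{vert}}$.

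The hard part, such as it is, sits in the left-module check of clause (a). That is the only point in the argument where two genuinely different restriction-of-scalars module structures (one restricted along $\mathcal{A}(h)$, the other along $\mathcal{B}(h)$) must be reconciled, and it is precisely the naturality square of $\eta$ that makes the standard-form operator $L^2(\eta_V)$ land in the correct module on the target side. Once that reconciliation is in place, clauses (b) and the functoriality statement are bookkeeping on top of the functoriality of $L^2$.
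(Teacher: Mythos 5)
Your proposal is correct and follows essentially the same argument as the paper: the right-boundary check is immediate from the standard right intertwining identity for $L^2(\eta_V)$, the left-boundary check uses the left intertwining identity at $\mathcal{A}(h)(a)$ followed by the naturality square $\eta_V\circ\mathcal{A}(h)=\mathcal{B}(h)\circ\eta_U$, square-naturality reduces to applying $L^2$ to naturality of $\eta$ at $j$, and functoriality in $\eta$ follows from $L^2$ being a functor. You also correctly identify the left-module reconciliation as the only substantive step, which matches the paper's emphasis.
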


\begin{proof}
Let $\eta:\mathcal{A}\Rightarrow \mathcal{B}$ be natural. Thus for each admissible embedding $h:U\hookrightarrow V$ in $\mathbf{Emb}(M)$ the naturality square commutes:
$$
\eta_V\circ \mathcal{A}(h)=\mathcal{B}(h)\circ \eta_U.
$$

Fix a horizontal arrow $h:U\hookrightarrow V$ in $\mathbf{Mink}(M)$. By definition,
$\mathcal{F}_{\mathcal{A}}(h)$ is the correspondence
$
\mathcal{A}(U)\rightsquigarrow \mathcal{A}(V)
$
with underlying Hilbert space $L^2\bigl(\mathcal{A}(V)\bigr)$, standard right action of $\mathcal{A}(V)$, and left action of $\mathcal{A}(U)$ restricted along $\mathcal{A}(h)$. Similarly, $\mathcal{F}_{\mathcal{B}}(h)$ has underlying Hilbert space $L^2\bigl(\mathcal{B}(V)\bigr)$ with left action restricted along $\mathcal{B}(h)$.

Define $(\mathcal{F}_\eta)_h:=L^2(\eta_V)$. This is defined because $\eta_V$ lies in $\mathbf{vN}_{\mathrm{vert}}$ and $L^2(-)$ is functorial on $\mathbf{vN}_{\mathrm{vert}}$ and intertwines the standard left and right actions.

\textit{Right action.}
For $b\in \mathcal{A}(V)$, functoriality of standard form gives
$$
L^2(\eta_V)\bigl(\xi\cdot b\bigr)=L^2(\eta_V)(\xi)\cdot \eta_V(b),
$$
equivalently the commutative diagram
\[\begin{tikzcd}
	{L^2(\mathcal{A}(V))} & {L^2(\mathcal{B}(V))} \\
	{L^2(\mathcal{A}(V))} & {L^2(\mathcal{B}(V))}
	\arrow["{L^2(\eta_V)}", from=1-1, to=1-2]
	\arrow["{\rho_{\mathcal{A}(V)}(b)}"', from=1-1, to=2-1]
	\arrow["{\rho_{\mathcal{B}(V)}(\eta_V(b))}", from=1-2, to=2-2]
	\arrow["{L^2(\eta_V)}"', from=2-1, to=2-2]
\end{tikzcd}\]
which is exactly the intertwiner condition for the right boundary map $\eta_V$.

\textit{Restricted left action.}
For $a\in \mathcal{A}(U)$, the left action on $\mathcal{F}_{\mathcal{A}}(h)$ is by
$\lambda_{\mathcal{A}(V)}\bigl(\mathcal{A}(h)(a)\bigr)$.
Using the standard-form left intertwining identity with $x=\mathcal{A}(h)(a)\in \mathcal{A}(V)$ gives
$$
L^2(\eta_V)\,\lambda_{\mathcal{A}(V)}\bigl(\mathcal{A}(h)(a)\bigr)
=
\lambda_{\mathcal{B}(V)}\bigl(\eta_V(\mathcal{A}(h)(a))\bigr)\,L^2(\eta_V).
$$
By naturality, $\eta_V(\mathcal{A}(h)(a))=\mathcal{B}(h)(\eta_U(a))$, so the right-hand side is precisely
$\lambda_{\mathcal{B}(V)}\bigl(\mathcal{B}(h)(\eta_U(a))\bigr)\,L^2(\eta_V)$,
which is the restricted left action corresponding to the left boundary map $\eta_U$. Equivalently, the diagram
\[\begin{tikzcd}
	{L^2(\mathcal{A}(V))} & {L^2(\mathcal{B}(V))} \\
	{L^2(\mathcal{A}(V))} & {L^2(\mathcal{B}(V))}
	\arrow["{L^2(\eta_V)}", from=1-1, to=1-2]
	\arrow["{\lambda_{\mathcal{A}(V)}(\mathcal{A}(h)(a))}"', from=1-1, to=2-1]
	\arrow["{\lambda_{\mathcal{B}(V)}(\mathcal{B}(h)(\eta_U(a)))}", from=1-2, to=2-2]
	\arrow["{L^2(\eta_V)}"', from=2-1, to=2-2]
\end{tikzcd}\]
commutes. This is the intertwiner condition for the restricted left actions with left boundary map $\eta_U$.

So $(\mathcal{F}_\eta)_h$ is a well-typed square in $\mathbf{vNA}$ from $\mathcal{F}_{\mathcal{A}}(h)$ to $\mathcal{F}_{\mathcal{B}}(h)$ with vertical boundary $(\eta_U,\eta_V)$.

\textit{Naturality with respect to spacetime squares.}
Let a square in $\mathbf{Mink}(M)$ be given by a commuting diagram
$$
\begin{tikzcd}
	U & V \\
	{U'} & {V'}
	\arrow["h", from=1-1, to=1-2]
	\arrow["i"', from=1-1, to=2-1]
	\arrow["j", from=1-2, to=2-2]
	\arrow["{h'}"', from=2-1, to=2-2]
\end{tikzcd}
\qquad (j\circ h = h'\circ i).
$$
By definition, $\mathcal{F}_{\mathcal{A}}$ sends this square to $L^2\bigl(\mathcal{A}(j)\bigr)$ and $\mathcal{F}_{\mathcal{B}}$ sends it to $L^2\bigl(\mathcal{B}(j)\bigr)$. We must check commutativity of
\[\begin{tikzcd}
	{L^2(\mathcal{A}(V))} & {L^2(\mathcal{A}(V'))} \\
	{L^2(\mathcal{B}(V))} & {L^2(\mathcal{B}(V'))}
	\arrow["{L^2(\mathcal{A}(j))}", from=1-1, to=1-2]
	\arrow["{L^2(\eta_V)}"', from=1-1, to=2-1]
	\arrow["{L^2(\eta_{V'})}", from=1-2, to=2-2]
	\arrow["{L^2(\mathcal{B}(j))}"', from=2-1, to=2-2]
\end{tikzcd}\]
This commutes because $\eta$ is natural at $j:V\hookrightarrow V'$:
$$
\eta_{V'}\circ \mathcal{A}(j)=\mathcal{B}(j)\circ \eta_V,
$$
and applying the functor $L^2(-)$ gives
$$
L^2(\eta_{V'})\circ L^2\bigl(\mathcal{A}(j)\bigr)
=
L^2\bigl(\mathcal{B}(j)\bigr)\circ L^2(\eta_V).
$$
This is exactly the required square-naturality.

\textit{Identities and composition.}
If $\eta=\mathrm{id}_{\mathcal{A}}$, then $(\mathcal{F}_\eta)_U=\mathrm{id}_{\mathcal{A}(U)}$ and
$(\mathcal{F}_\eta)_h=L^2\bigl(\mathrm{id}_{\mathcal{A}(V)}\bigr)=\mathrm{id}_{L^2(\mathcal{A}(V))}$, so
$\mathcal{F}_{\mathrm{id}_{\mathcal{A}}}=\mathrm{id}_{\mathcal{F}_{\mathcal{A}}}$.
If $\mathcal{A}\xRightarrow{\eta}\mathcal{B}\xRightarrow{\theta}\mathcal{C}$, then for each $h:U\hookrightarrow V$,
$$
(\mathcal{F}_{\theta\circ \eta})_h
=
L^2\bigl((\theta\circ \eta)_V\bigr)
=
L^2(\theta_V\circ \eta_V)
=
L^2(\theta_V)\circ L^2(\eta_V)
=
(\mathcal{F}_\theta)_h\circ (\mathcal{F}_\eta)_h,
$$
and similarly on objects. Hence $\mathcal{F}_{\theta\circ \eta}=\mathcal{F}_\theta\circ \mathcal{F}_\eta$.

This proves all claims.
\end{proof}

We now move on to examples illustrating how these structural statements show up in concrete AQFT inputs.

\section{Examples}

Throughout this section, the domain double category is $\mathbf{Mink}(M)$ (or $\mathbf{Mink}(S^{1})$ in the circle example). Its objects are regions $U$, its vertical arrows are inclusions $i:U\hookrightarrow U'$, and its horizontal arrows are admissible embeddings $h:U\hookrightarrow V$ (as specified in each example). A square is a commuting boundary diagram:

$$
\begin{tikzcd}
	U & V \\
	{U'} & {V'}
	\arrow["h", from=1-1, to=1-2]
	\arrow["i"', from=1-1, to=2-1]
	\arrow["j", from=1-2, to=2-2]
	\arrow["{h'}"', from=2-1, to=2-2]
\end{tikzcd}
\quad \text{meaning } j\circ h = h'\circ i \text{ as maps.}
$$

The input lands in $\mathbf{vN}_{\mathrm{vert}}$: its objects are von Neumann algebras and its morphisms are normal unital $*$-homomorphisms (in the class for which $\phi\mapsto L^{2}(\phi)$ is functorial). Given an input $\mathcal{A}$, the associated double functor $\mathcal{F}_{\mathcal{A}}$ is obtained by the fixed standard-form/correspondence construction, using the functoriality of $\phi\mapsto L^{2}(\phi)$ on $\mathbf{vN}_{\mathrm{vert}}$.

We assume basic familiarity with operator-algebraic AQFT (von Neumann algebras and conformal nets, including $\mathrm{Diff}_{+}(S^{1})$-covariance) and with the Weyl/CCR construction of free fields.

\textbf{Diffeomorphism-covariant conformal nets on $S^1$.} Let $(\mathcal{B},U,\Omega)$ be a $\mathrm{Diff}_{+}(S^{1})$-covariant conformal net on $S^{1}$ in the standard sense: for each nonempty, nondense open arc $I\subset S^{1}$ there is a von Neumann algebra
$
\mathcal{B}(I)\subset B(\mathcal{H}),
$
isotone in $I$, local for disjoint arcs, and covariant under $\mathrm{Diff}_{+}(S^{1})$ via a (possibly projective) unitary representation $U$. We use only that the induced maps
$
\operatorname{Ad}_{U(F)}:x\longmapsto U(F)\,x\,U(F)^{*}
$
define an honest action on the algebras (projective phases cancel under $\operatorname{Ad}$).

Define the quasilocal algebra (in this represented form) by
$$
\mathcal{B}(S^{1})
:=
\Bigl(\bigcup_{I\subset S^{1}\ \mathrm{arc}}\mathcal{B}(I)\Bigr)^{\prime\prime}
\subset
B(\mathcal{H}).
$$
Thus $\mathcal{B}(S^{1})$ is the von Neumann algebra generated by the local arc algebras.

We make admissible embeddings part of the data by requiring a chosen global extension.

\begin{itemize}
\item Objects. Nonempty, nondense open arcs $I\subset S^{1}$ together with the ambient object $S^{1}$.

\item Vertical arrows. Inclusions of arcs $i:I\hookrightarrow I'$ and the structure inclusions $I\hookrightarrow S^{1}$.

\item Horizontal arrows. A horizontal arrow $h:I\to J$ is a pair
$$
h=(F,I),
$$
where $F\in \mathrm{Diff}_{+}(S^{1})$ satisfies $F(I)\subseteq J$. Its underlying smooth embedding is the restriction $F|_{I}:I\to J$.

\item Horizontal composition. If $h_{1}=(F_{1},I_{1}):I_{1}\to I_{2}$ and $h_{2}=(F_{2},I_{2}):I_{2}\to I_{3}$, define
$$
h_{2}\circ h_{1}:=(F_{2}F_{1},I_{1}),
$$
whose underlying map is $(F_{2}F_{1})|_{I_{1}}$. Associativity is inherited from composition in $\mathrm{Diff}_{+}(S^{1})$.

\item Squares. A square is a commuting square in $\mathbf{Emb}(S^{1})$, i.e. a boundary diagram with vertical inclusions $i:I\hookrightarrow I'$ and $j:J\hookrightarrow J'$ and horizontals $h=(F,I)$ and $h'=(F',I')$ such that
$$
j\circ h = h'\circ i
$$
as morphisms in $\mathbf{Emb}(S^{1})$. Concretely, this forces $F'=F$ and the underlying embeddings commute:
$$
j\circ (F|_{I})=(F|_{I'})\circ i.
$$
\end{itemize}

Define
$$
\mathcal{A}:\mathbf{Emb}(S^{1})\longrightarrow \mathbf{vN}_{\mathrm{vert}}
$$
as follows:

\begin{itemize}
\item Objects. For an arc $I\subset S^{1}$ set $\mathcal{A}(I):=\mathcal{B}(I)$, and set
$$
\mathcal{A}(S^{1}):=\mathcal{B}(S^{1}).
$$

\item Vertical arrows. For an inclusion $i:I\hookrightarrow I'$ of arcs, define $\mathcal{A}(i)$ to be the inclusion
$$
\mathcal{A}(i):\mathcal{B}(I)\hookrightarrow \mathcal{B}(I').
$$
This is a normal injective unital $*$-homomorphism.

\item Horizontal arrows. For a horizontal arrow $h=(F,I):I\to J$ (so $F\in \mathrm{Diff}_{+}(S^{1})$ and $F(I)\subseteq J$), define
$$
\mathcal{A}(h)\;:=\;\mathrm{Ad}_{U(F)}\big|_{\mathcal{B}(I)}:\mathcal{B}(I)\longrightarrow \mathcal{B}(F(I))\subseteq \mathcal{B}(J).
$$
This is a normal unital $*$-homomorphism, since $\mathrm{Ad}_{U(F)}$ is a $*$-automorphism of $B(\mathcal{H})$ and covariance gives
$\mathrm{Ad}_{U(F)}\bigl(\mathcal{B}(I)\bigr)=\mathcal{B}(F(I))$.
\end{itemize}

Functoriality on horizontal arrows holds at the algebra level: for $F_{1},F_{2}\in \mathrm{Diff}_{+}(S^{1})$,
$$
\mathrm{Ad}_{U(F_{2})}\circ \mathrm{Ad}_{U(F_{1})}
=\mathrm{Ad}_{U(F_{2})U(F_{1})}
=\mathrm{Ad}_{U(F_{2}F_{1})},
$$
where the last equality uses that any projective scalar in $U(F_{2})U(F_{1})$ acts trivially under $\mathrm{Ad}$.

We now verify each of the Haag--Kastler axioms for $\mathcal{A}$.

\begin{itemize}
\item \textbf{HK1 (Isotony).}
If $i:I\hookrightarrow I'$ is a vertical inclusion of arcs, then by construction
$$
\mathcal{A}(i):\mathcal{A}(I)=\mathcal{B}(I)\longrightarrow \mathcal{A}(I')=\mathcal{B}(I')
$$
is literally the inclusion $\mathcal{B}(I)\subseteq \mathcal{B}(I')$.

\item \textbf{HK2 (Locality).}
Let $I,J\subset S^{1}$ be disjoint arcs. Locality of the conformal net gives
$$
[\mathcal{B}(I),\mathcal{B}(J)]=0 \qquad \text{inside } B(\mathcal{H}).
$$
Since $\mathcal{A}(S^{1})=\mathcal{B}(S^{1})\subseteq B(\mathcal{H})$ contains both $\mathcal{B}(I)$ and $\mathcal{B}(J)$ as von Neumann subalgebras, the same commutator identity holds inside the ambient algebra $\mathcal{A}(S^{1})$.

\item \textbf{HK3 (Covariance).}
In this setup, \textbf{HK3} is exactly the statement that $\mathcal{A}$ is functorial on the embedding category.
For horizontal arrows $h_{1}=(F_{1},I_{1}):I_{1}\to I_{2}$ and $h_{2}=(F_{2},I_{2}):I_{2}\to I_{3}$,
$$
\mathcal{A}(h_{2})\circ \mathcal{A}(h_{1})
=\mathrm{Ad}_{U(F_{2})}\circ \mathrm{Ad}_{U(F_{1})}
=\mathrm{Ad}_{U(F_{2}F_{1})}
=\mathcal{A}\bigl((F_{2}F_{1},I_{1})\bigr),
$$
using $\mathrm{Ad}_{U(F_{2})}\circ \mathrm{Ad}_{U(F_{1})}=\mathrm{Ad}_{U(F_{2})U(F_{1})}$ and that any projective scalar in
$U(F_{2})U(F_{1})$ disappears under $\mathrm{Ad}$.
Vertical functoriality is immediate because vertical arrows are inclusions and $\mathcal{A}$ sends them to inclusions.

\item \textbf{HK4 (Time-slice).}
In the circle setup used here, we take the designated class of Cauchy inclusions to be empty, hence \textbf{HK4} is vacuous for this example.

\item \textbf{HK5 (Additivity).}
We use the standard additivity axiom of the chosen conformal net: if an arc $I$ is covered by subarcs $\{I_{\alpha}\}$, then
$$
\mathcal{B}(I)=\Bigl(\bigcup_{\alpha}\mathcal{B}(I_{\alpha})\Bigr)''.
$$
Since $\mathcal{A}(I)=\mathcal{B}(I)$ on objects and $\mathcal{A}$ sends inclusions of subarcs to the corresponding inclusions of algebras, this is exactly \textbf{HK5} for the extracted net $\mathcal{A}\!\mid_{\mathbf{Inc}(S^{1})}$.
\end{itemize}

Now we verify square compatibility. Let a square in $\mathbf{Mink}(S^{1})$ be specified by vertical inclusions $i:I\hookrightarrow I'$ and $j:J\hookrightarrow J'$, and by horizontal arrows
$$
h=(F,I):I\to J,
\quad
h'=(F',I'):I'\to J',
$$
such that $j\circ h = h'\circ i$ in $\mathbf{Emb}(S^{1})$. In particular, $F'=F$, and the underlying embeddings commute:
$$
j\circ (F|_{I}) \;=\; (F|_{I'})\circ i.
$$
Then the two composites
$$
\mathcal{A}(j)\circ \mathcal{A}(h),
\quad
\mathcal{A}(h')\circ \mathcal{A}(i)
$$
agree as normal unital $*$-homomorphisms $\mathcal{B}(I)\to \mathcal{B}(J')$.
Indeed, $\mathcal{A}(i)$ and $\mathcal{A}(j)$ are the inclusion maps by \textbf{HK1}, while
$$
\mathcal{A}(h)=\mathrm{Ad}_{U(F)}\big|_{\mathcal{B}(I)}
\quad\text{and}\quad
\mathcal{A}(h')=\mathrm{Ad}_{U(F)}\big|_{\mathcal{B}(I')}.
$$
Thus for $x\in \mathcal{B}(I)$ both composites send $x$ to $\mathrm{Ad}_{U(F)}(x)$, viewed inside $\mathcal{B}(J')$ by isotony. This is exactly the square-compatibility condition required for the input functor $\mathcal{A}$.

\textbf{The massive Klein--Gordon net on $\mathbb{R}^{1,3}$.} In this example, let $M$ be Minkowski spacetime and let $R(M)$ be the class of double cones (diamonds), together with $M$ itself as an ambient object. Horizontal morphisms are taken to be restrictions of proper orthochronous Poincar\'e isometries (i.e.\ time-orientation preserving), so that the retarded/advanced structures are preserved.

Let $P_M := \Box_M + m^2$ act on $C^\infty(M,\mathbb{R})$. This operator is formally self-adjoint with respect to the Minkowski volume form. Since $M$ is globally hyperbolic (in particular, Minkowski space), there exist unique advanced and retarded Green operators
$$
E_M^\pm : C_c^\infty(M) \longrightarrow C^\infty(M)
$$
with the usual support and inverse properties. Define the causal propagator by
$$
E_M := E_M^- - E_M^+ .
$$
In what follows, we use only these standard existence, uniqueness, and support properties \cite{baer2008waveequationslorentzianmanifolds}.

For each diamond $T\subset M$, define the real vector space
$$
V(T)\;:=\;C_c^\infty(T,\mathbb{R})\,/\,P_T C_c^\infty(T,\mathbb{R}),
$$
where $P_T$ denotes the restriction of $P_M$ to test functions supported in $T$.

If $i:T\hookrightarrow T'$ is a vertical inclusion, define
$$
V(i):V(T)\longrightarrow V(T'),\quad V(i)[f]\;:=\;[f_{\mathrm{ext},T'}],
$$
where $f_{\mathrm{ext},T'}\in C_c^\infty(T')$ is the extension of $f$ by zero outside $T$.

\begin{lemma}[Extension commutes with $P$]\label{lem:ext-commutes-P}
For every $u\in C_c^\infty(T)$ one has
$$
(P_Tu)_{\mathrm{ext},T'} \;=\; P_{T'}(u_{\mathrm{ext},T'}).
$$
\end{lemma}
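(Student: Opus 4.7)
The statement is essentially a locality fact: $P_M = \Box_M + m^2$ is a second-order linear partial differential operator with smooth coefficients, hence a strictly local operator on smooth functions. The plan is to verify the identity pointwise on $T'$ by splitting into points inside $T$ and points in $T'\setminus T$, using locality on each piece.

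First I would record the support bookkeeping that makes both sides well-defined as elements of $C_c^\infty(T')$. Because $u\in C_c^\infty(T)$, the support $\mathrm{supp}(u)$ is a compact subset of the open set $T$; in particular $\mathrm{supp}(u)$ stays away from $\partial T$. Since $P$ is a differential operator, $\mathrm{supp}(P_T u)\subseteq \mathrm{supp}(u)$, so $P_T u$ also has compact support in $T$ and its zero extension $(P_T u)_{\mathrm{ext},T'}$ is smooth on $T'$. Likewise $u_{\mathrm{ext},T'}$ is smooth with compact support in $T'$ because it vanishes identically on an open neighborhood of $T'\setminus T$, so $P_{T'}(u_{\mathrm{ext},T'})$ is a well-defined element of $C_c^\infty(T')$.

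Next I would verify pointwise equality. For $x\in T$, openness of $T$ yields a neighborhood $U_x\subseteq T$ on which $u_{\mathrm{ext},T'}$ coincides with $u$. Since $P$ is local and $P_T$, $P_{T'}$ are both restrictions of the ambient operator $P_M$ to functions on $T$, $T'$ respectively, one gets
$$
P_{T'}(u_{\mathrm{ext},T'})(x)\;=\;P_M(u_{\mathrm{ext},T'})(x)\;=\;P_M(u)(x)\;=\;P_T(u)(x)\;=\;(P_T u)_{\mathrm{ext},T'}(x).
$$
For $x\in T'\setminus T$, the point $x$ lies outside the compact set $\mathrm{supp}(u)\subset T$, so $u_{\mathrm{ext},T'}$ vanishes on a neighborhood of $x$, and locality gives $P_{T'}(u_{\mathrm{ext},T'})(x)=0$; on the other side $(P_T u)_{\mathrm{ext},T'}(x)=0$ by definition, since $x\notin T\supseteq \mathrm{supp}(P_T u)$.

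There is no real obstacle here; the only thing to be careful about is that ``extension by zero'' is used in two places and must land in $C_c^\infty$ rather than merely in distributions. That is why the support observation at the start matters: without knowing that $\mathrm{supp}(u)$ sits compactly inside $T$, extension by zero would not even be smooth, and the locality argument could not be phrased pointwise. Once the support discussion is in place, the remainder is the one-line locality argument above.
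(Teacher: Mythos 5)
Your proof is correct and uses essentially the same argument as the paper: both rely on the locality of the differential operator $P$ together with the support observation that $\operatorname{supp}(u)\Subset T$, and both conclude by comparing the two sides on a neighborhood of $\operatorname{supp}(u)$ and off $T$. Your version is slightly more explicit in phrasing the comparison pointwise and splitting $T'$ into $T$ and $T'\setminus T$, but this is a presentational difference, not a different route.
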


\begin{proof}
Since $\operatorname{supp}(u)\Subset T$ and $T\subset T'$ is open, the extension $u_{\mathrm{ext},T'}$ is smooth on $T'$ and agrees with $u$ on a neighborhood of $\operatorname{supp}(u)$. As $P$ is a differential operator, it is local; hence $P_{T'}(u_{\mathrm{ext},T'})$ agrees with $P_Tu$ on a neighborhood of $\operatorname{supp}(u)$ and vanishes outside $T$. Therefore $P_{T'}(u_{\mathrm{ext},T'})$ is precisely the extension by zero of $P_Tu$, i.e.\ $(P_Tu)_{\mathrm{ext},T'}=P_{T'}(u_{\mathrm{ext},T'})$.
\end{proof}

It follows that $V(i)$ is well-defined on the quotient: if $f=P_Tu$, then
$$
f_{\mathrm{ext},T'} \;=\; (P_Tu)_{\mathrm{ext},T'} \;=\; P_{T'}(u_{\mathrm{ext},T'}) \;\in\; P_{T'}C_c^\infty(T').
$$

For each diamond $T\subset M$, define a bilinear form
$$
\sigma_T([f],[g])\;:=\;\int_M f_{\mathrm{ext},M}\,E_M\!\bigl(g_{\mathrm{ext},M}\bigr)\,d\mathrm{vol}_M.
$$
(We always extend to the ambient spacetime $M$, so the formula is evaluated in a fixed domain.)

\begin{lemma}[Well-definedness on the quotient]\label{lem:sigma-quotient}
The bilinear form $\sigma_T$ descends to a well-defined antisymmetric bilinear form on
$$
V(T)\;=\;C_c^\infty(T)\,/\,P_T C_c^\infty(T).
$$
\end{lemma}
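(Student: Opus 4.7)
The plan is to split the claim into bilinearity, descent to the quotient in each argument, and antisymmetry. Bilinearity at the level of representatives is immediate from linearity of extension by zero, of $E_M$, and of integration against $d\mathrm{vol}_M$, so I would dispatch it in one line and focus the proof on the quotient-descent and antisymmetry.

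For descent in the first argument, I would assume $f = P_T u$ with $u \in C_c^\infty(T)$ and invoke Lemma \ref{lem:ext-commutes-P} to write $f_{\mathrm{ext},M} = P_M(u_{\mathrm{ext},M})$, with $u_{\mathrm{ext},M} \in C_c^\infty(M)$. The key step is then to integrate by parts, using formal self-adjointness of $P_M = \Box_M + m^2$ with respect to $d\mathrm{vol}_M$, to move $P_M$ onto the second factor:
$$
\int_M P_M(u_{\mathrm{ext},M}) \cdot E_M(g_{\mathrm{ext},M}) \, d\mathrm{vol}_M
\;=\;
\int_M u_{\mathrm{ext},M} \cdot P_M E_M(g_{\mathrm{ext},M}) \, d\mathrm{vol}_M.
$$
The right-hand side vanishes because $P_M \circ E_M = 0$ on $C_c^\infty(M)$ is a defining property of the causal propagator on a globally hyperbolic spacetime.

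Descent in the second argument is simpler: if $g = P_T u$, then $g_{\mathrm{ext},M} = P_M(u_{\mathrm{ext},M})$ by the same lemma, and $E_M(g_{\mathrm{ext},M}) = E_M P_M(u_{\mathrm{ext},M}) = 0$ directly from the other Green identity $E_M \circ P_M = 0$, with no integration by parts required. For antisymmetry, I would use the standard duality $(E_M^\pm)^{\ast} = E_M^\mp$ of advanced and retarded Green operators with respect to the Minkowski $L^2$ pairing, which gives $E_M^{\ast} = E_M^+ - E_M^- = -E_M$, and hence
$$
\sigma_T([f],[g])
= \int_M f_{\mathrm{ext},M} \, E_M(g_{\mathrm{ext},M}) \, d\mathrm{vol}_M
= -\int_M E_M(f_{\mathrm{ext},M}) \, g_{\mathrm{ext},M} \, d\mathrm{vol}_M
= -\sigma_T([g],[f]).
$$

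The only point requiring care is the integration by parts in the first-argument descent. I would justify it by noting that $u_{\mathrm{ext},M}$ has compact support in $T \Subset M$, so the integrand and every intermediate boundary form produced when $P_M$ is moved across are compactly supported on $M$; the formal self-adjointness identity then holds with no boundary contributions. This is the only nontrivial analytic input, and once it is in place the remaining manipulations use only Lemma \ref{lem:ext-commutes-P} and the standard Green-operator relations $P_M E_M = 0 = E_M P_M$ on $C_c^\infty(M)$ cited in the excerpt.
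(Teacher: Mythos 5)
Your proof is correct and follows essentially the same route as the paper: first-slot descent via Lemma~\ref{lem:ext-commutes-P}, formal self-adjointness of $P_M$, and $P_M E_M = 0$; second-slot descent directly from $E_M P_M = 0$; antisymmetry from $E_M^* = -E_M$. Your added remarks — deriving $E_M^*=-E_M$ from $(E_M^\pm)^* = E_M^\mp$, and justifying the integration by parts via the compact support of $u_{\mathrm{ext},M}$ — are small but accurate elaborations of steps the paper leaves implicit.
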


\begin{proof}
Independence of representatives (first slot).
Suppose $f=P_Tu$ for some $u\in C_c^\infty(T)$. By Lemma~\ref{lem:ext-commutes-P} with $T'=M$,
$$
f_{\mathrm{ext},M}=(P_Tu)_{\mathrm{ext},M}=P_M(u_{\mathrm{ext},M}).
$$
Hence for any $g\in C_c^\infty(T)$,
$$
\sigma_T([P_Tu],[g])
=\int_M P_M(u_{\mathrm{ext},M})\,E_M(g_{\mathrm{ext},M})\,d\mathrm{vol}_M.
$$
Using formal self-adjointness of $P_M$ on compactly supported test functions, we may integrate by parts:
$$
\int_M P_M(u_{\mathrm{ext},M})\,v\,d\mathrm{vol}_M
=\int_M u_{\mathrm{ext},M}\,P_M(v)\,d\mathrm{vol}_M
\quad (v\in C^\infty(M)).
$$
Taking $v=E_M(g_{\mathrm{ext},M})$ and using $P_M E_M=0$, we obtain
$\sigma_T([P_Tu],[g])=0$.

Independence of representatives (second slot).
The same argument shows $\sigma_T([f],[P_Tv])=0$ for $v\in C_c^\infty(T)$, using $E_M P_M=0$.

Antisymmetry.
The causal propagator satisfies $E_M^*=-E_M$, equivalently
$$
\int_M f\,E_M(g)\,d\mathrm{vol}_M \;=\;-\int_M g\,E_M(f)\,d\mathrm{vol}_M
\quad (f,g\in C_c^\infty(M)).
$$
Applying this to $f_{\mathrm{ext},M},g_{\mathrm{ext},M}$ gives
$\sigma_T([f],[g])=-\sigma_T([g],[f])$.

Therefore $\sigma_T$ is well-defined on $V(T)$ and antisymmetric.
\end{proof}

\textit{Remark.} Nondegeneracy is standard and is not needed beyond the Weyl/CCR construction. If one wishes to avoid any degeneracy issue at the level of the CCR presentation, one may replace $V(T)$ by the symplectic quotient $V(T)/\mathrm{rad}(\sigma_T)$ throughout; we suppress this in the notation.

Let $h:T\to T'$ be a horizontal arrow given by restricting a proper orthochronous Poincar\'e isometry
$\kappa$ with $\kappa(T)\subset T'$, i.e.
$$
h=\kappa|_{T}:T\longrightarrow T'.
$$
Define the pushforward on test functions by
$$
(\kappa_* f)(x)\;:=\;f(\kappa^{-1}x),
$$
so that
$$
\kappa_*:C_c^\infty(T)\longrightarrow C_c^\infty(\kappa(T)).
$$
Define
$$
V(h):V(T)\longrightarrow V(T'),
\quad
V(h)[f]\;:=\;\bigl[(\kappa_* f)_{\mathrm{ext},T'}\bigr].
$$
This is well-defined on the quotient: if $f=P_Tu$, then $\kappa_* f=P_{\kappa(T)}(\kappa_*u)$ by
isometry-invariance of $P$, hence $(\kappa_* f)_{\mathrm{ext},T'}\in P_{T'}C_c^\infty(T')$.

\begin{lemma}[Pushforward commutes with extension]\label{lem:kappa-ext}
For any diffeomorphism $\kappa$ and $f\in C_c^\infty(T)$,
$$
\kappa_*(f_{\mathrm{ext},M}) \;=\; (\kappa_* f)_{\mathrm{ext},M}.
$$
\end{lemma}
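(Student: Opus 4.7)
The plan is to verify the identity pointwise on $M$, using the bijectivity of $\kappa$ to organize a short case split. The key observation is that, since $\kappa$ is a diffeomorphism, $\kappa^{-1}(x)\in T$ if and only if $x\in \kappa(T)$; this equivalence is what lets the ``extend by zero'' operation commute past $\kappa_*$.

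First I would fix $x\in M$ and unpack both sides from the definitions. The left-hand side is $\kappa_*(f_{\mathrm{ext},M})(x)=f_{\mathrm{ext},M}(\kappa^{-1}x)$, which equals $f(\kappa^{-1}x)$ when $\kappa^{-1}x\in T$ and $0$ otherwise. The right-hand side $(\kappa_* f)_{\mathrm{ext},M}(x)$ equals $(\kappa_* f)(x)=f(\kappa^{-1}x)$ when $x\in \kappa(T)$ and $0$ otherwise. The two cases match up precisely by the equivalence $\kappa^{-1}x\in T\Leftrightarrow x\in \kappa(T)$, so the functions agree at every point.

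The only thing beyond this pointwise matching is to confirm that both sides are bona fide elements of $C_c^\infty(M)$, so that the equality is not merely set-theoretic. For this I would note that $\operatorname{supp}(\kappa_* f)=\kappa(\operatorname{supp} f)$ is compact and contained in the open set $\kappa(T)\subset M$, so $(\kappa_* f)_{\mathrm{ext},M}$ is smooth on $M$ by the standard extension-by-zero argument used to define $f_{\mathrm{ext},M}$ itself. Since $\kappa$ is a diffeomorphism, $\kappa_*(f_{\mathrm{ext},M})$ is automatically smooth on $M$, with the same support $\kappa(\operatorname{supp} f)$. Thus both sides lie in $C_c^\infty(M)$ and agree pointwise, completing the proof.

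I do not anticipate a real obstacle here: the statement is essentially a tautology once one writes out the definitions, and its content is that ``extension by zero'' is a natural construction with respect to diffeomorphism pushforward. The only place to be slightly careful is the smoothness check at points of $\partial\kappa(T)$, which is handled precisely because $\operatorname{supp}(f)$ is compactly contained in $T$ (so both sides vanish on a neighborhood of that boundary).
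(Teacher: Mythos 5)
Your proof is correct and takes essentially the same approach as the paper's: both verify the identity pointwise by observing that each side is the function $x\mapsto f(\kappa^{-1}x)$ on $\kappa(T)$ and $0$ elsewhere, with the equivalence $\kappa^{-1}x\in T \Leftrightarrow x\in\kappa(T)$ doing the work. The additional smoothness check you supply is a reasonable bit of extra care that the paper leaves implicit.
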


\begin{proof}
Both sides are the test function on $M$ given by $x\mapsto f(\kappa^{-1}x)$ on $\kappa(T)$ and $0$
on $M\setminus \kappa(T)$.
\end{proof}

\begin{lemma}[Symplecticity]\label{lem:symplecticity}
For every horizontal arrow $h=\kappa|_{T}:T\to T'$,
$$
\sigma_{T'}\bigl(V(h)[f],\,V(h)[g]\bigr)\;=\;\sigma_T([f],[g])
\qquad
\text{for all }[f],[g]\in V(T).
$$
\end{lemma}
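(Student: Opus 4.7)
The plan is to unfold both sides to integrals over the full Minkowski spacetime $M$ and then exploit the three Poincar\'e-invariant ingredients: the volume form $d\mathrm{vol}_M$, the operator $P_M$, and the causal structure. First I would combine Lemma~\ref{lem:kappa-ext} with transitivity of extension-by-zero across intermediate domains, which gives $((\kappa_* f)_{\mathrm{ext},T'})_{\mathrm{ext},M} = (\kappa_* f)_{\mathrm{ext},M} = \kappa_*(f_{\mathrm{ext},M})$, and similarly for $g$. This rewrites the left-hand side of the claimed identity as
$$
\int_M \kappa_*(f_{\mathrm{ext},M})\cdot E_M\bigl(\kappa_*(g_{\mathrm{ext},M})\bigr)\,d\mathrm{vol}_M.
$$

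The central step is then the commutation $\kappa_* E_M = E_M \kappa_*$ on $C_c^\infty(M)$. I would derive this from uniqueness of the advanced and retarded Green operators: set $\tilde E_M^\pm := \kappa_*^{-1}\circ E_M^\pm \circ \kappa_*$ and verify that $\tilde E_M^\pm$ satisfies the defining conditions of $E_M^\pm$. The input is that $\kappa$ is an isometry, which yields $\kappa_* P_M = P_M \kappa_*$ (so the required Green-operator identities $P_M\tilde E_M^\pm = \tilde E_M^\pm P_M = \mathrm{id}$ on test functions hold), and that $\kappa$ is proper orthochronous, so $\kappa(J^\pm(S)) = J^\pm(\kappa(S))$ for every $S\subset M$ (which gives the support-propagation conditions for $\tilde E_M^\pm$). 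Uniqueness of $E_M^\pm$ on the globally hyperbolic manifold $M$ then forces $\tilde E_M^\pm = E_M^\pm$, so $\kappa_*$ commutes with $E_M^- - E_M^+ = E_M$.

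With this commutation in hand, I would apply the change-of-variables identity
$$
\int_M \kappa_*(u)\cdot \kappa_*(v)\,d\mathrm{vol}_M \;=\; \int_M u\,v\,d\mathrm{vol}_M,
$$
valid because $\kappa$ is a volume-preserving diffeomorphism of $M$, to conclude that the displayed integral equals $\int_M f_{\mathrm{ext},M}\cdot E_M(g_{\mathrm{ext},M})\,d\mathrm{vol}_M = \sigma_T([f],[g])$ by definition.

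The main obstacle is the commutation $\kappa_* E_M = E_M \kappa_*$: although it is morally immediate from Poincar\'e-invariance of $(g_M, P_M)$ and the causal structure, it is the one step requiring nontrivial input, namely uniqueness of the causal Green operators together with explicit use of time-orientation preservation (otherwise $\kappa$ could swap $E_M^+$ and $E_M^-$ and the sign of $E_M$ would flip). Everything else is formal bookkeeping with extension-by-zero and a Jacobian-free change of variables; once the commutation is established, well-definedness on quotients is inherited from Lemma~\ref{lem:sigma-quotient}, since each side of the claimed identity descends to the quotient automatically.
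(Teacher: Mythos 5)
Your proof is correct and follows essentially the same route as the paper: rewrite both sides as integrals over $M$ via the extension/pushforward commutation of Lemma~\ref{lem:kappa-ext}, use uniqueness of the causal Green operators (with time-orientation preservation) to commute $\kappa_*$ past $E_M$, and finish with a volume-preserving change of variables. The only difference is presentational: you spell out the Green-operator uniqueness argument (conjugating $E_M^\pm$ by $\kappa_*$ and checking the defining properties), which the paper invokes in a single sentence.
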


\begin{proof}
By definition of $V(h)$ and $\sigma_{T'}$,
$$
\sigma_{T'}\bigl(V(h)[f],V(h)[g]\bigr)
=\int_M (\kappa_*f)_{\mathrm{ext},M}\,E_M\bigl((\kappa_*g)_{\mathrm{ext},M}\bigr)\,d\mathrm{vol}_M.
$$
Using Lemma~\ref{lem:kappa-ext} (pushforward commutes with extension),
$$
=\int_M \kappa_*(f_{\mathrm{ext},M})\,E_M\bigl(\kappa_*(g_{\mathrm{ext},M})\bigr)\,d\mathrm{vol}_M.
$$
Since $\kappa$ is a time-orientation preserving isometry, uniqueness of advanced/retarded Green operators implies
$E_M^\pm\circ \kappa_*=\kappa_*\circ E_M^\pm$, hence $E_M\circ \kappa_*=\kappa_*\circ E_M$. Therefore
$$
=\int_M \kappa_*(f_{\mathrm{ext},M})\,\kappa_*\!\bigl(E_M(g_{\mathrm{ext},M})\bigr)\,d\mathrm{vol}_M.
$$
Since $\kappa$ preserves $d\mathrm{vol}_M$, change of variables yields
$$
=\int_M f_{\mathrm{ext},M}\,E_M(g_{\mathrm{ext},M})\,d\mathrm{vol}_M
=\sigma_T([f],[g]).
$$
\end{proof}

\begin{lemma}[Functoriality of $V(h)$]\label{lem:V-functoriality}
If $h_1=\kappa_1|_{T_1}:T_1\to T_2$ and $h_2=\kappa_2|_{T_2}:T_2\to T_3$, then
$$
V(h_2\circ h_1)\;=\;V(h_2)\circ V(h_1).
$$
\end{lemma}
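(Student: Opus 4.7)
The plan is to verify the claimed identity representative-wise in $V(T_3)$, using only two elementary inputs: the pushforward composition law $(\kappa_2\circ \kappa_1)_* = \kappa_{2*}\circ \kappa_{1*}$, and a localized version of Lemma~\ref{lem:kappa-ext} saying that pushforward by a diffeomorphism commutes with extension-by-zero into any open superset of its image. Well-definedness on the quotient is not a separate concern, because $V(h_1)$ and $V(h_2)$ have already been shown to descend to the quotients; hence once equality is verified on any representative, the identity passes automatically to $V(T_1)\to V(T_3)$.

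First I would fix $f\in C_c^\infty(T_1)$ and unfold both sides. By construction, $h_2\circ h_1$ is the restriction of $\kappa_2\kappa_1$ to $T_1$, so
$$
V(h_2\circ h_1)[f] \;=\; \bigl[\bigl((\kappa_2\kappa_1)_* f\bigr)_{\mathrm{ext},T_3}\bigr],
$$
while unwinding the definitions twice gives
$$
V(h_2)\circ V(h_1)[f] \;=\; V(h_2)\bigl[(\kappa_{1*}f)_{\mathrm{ext},T_2}\bigr] \;=\; \Bigl[\Bigl(\kappa_{2*}\bigl((\kappa_{1*}f)_{\mathrm{ext},T_2}\bigr)\Bigr)_{\mathrm{ext},T_3}\Bigr].
$$
Both representatives are elements of $C_c^\infty(T_3)$, so it suffices to identify them pointwise on $T_3$.

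The key step is the identity
$$
\Bigl(\kappa_{2*}\bigl((\kappa_{1*}f)_{\mathrm{ext},T_2}\bigr)\Bigr)_{\mathrm{ext},T_3}
\;=\;
\bigl((\kappa_2\kappa_1)_* f\bigr)_{\mathrm{ext},T_3}.
$$
The pushforward composition law $(\kappa_2\circ \kappa_1)_* f = \kappa_{2*}\kappa_{1*}f$ is immediate from $\kappa_*f = f\circ \kappa^{-1}$. The compatibility with extension is proved exactly as in Lemma~\ref{lem:kappa-ext}: both candidate functions on $\kappa_2(T_2)$ evaluate to $f\circ \kappa_1^{-1}\circ \kappa_2^{-1}$ on $\kappa_2\kappa_1(T_1)$ and to zero on its complement inside $\kappa_2(T_2)$. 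A further extension by zero from $\kappa_2(T_2)$ up to $T_3$ agrees with direct extension from $\kappa_2\kappa_1(T_1)$ to $T_3$, since iterated extensions by zero through nested opens collapse to a single extension.

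The main obstacle is purely bookkeeping: one must track which open set hosts each extension and confirm that the operation ``extend to $T_2$, pushforward by $\kappa_2$, extend to $T_3$'' computes the same function as ``pushforward by $\kappa_2\kappa_1$ and extend to $T_3$.'' There is no analytic content beyond the pointwise composition law for pushforwards and the locality of support of extension-by-zero; once these are lined up, the two representatives coincide in $C_c^\infty(T_3)$ and the lemma follows.
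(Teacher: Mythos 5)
Your argument is correct and matches the paper's proof: both sides are unwound to explicit representatives in $C_c^\infty(T_3)$, the identity is checked pointwise by tracking supports through the two extensions-by-zero and using $(\kappa_2\kappa_1)_*=\kappa_{2*}\kappa_{1*}$, and well-definedness on the quotient is inherited from the already-established well-definedness of $V(h_1)$ and $V(h_2)$. The only difference is cosmetic — you phrase the collapse of nested extensions as a general observation while the paper verifies it inline — but the substance is the same.
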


\begin{proof}
Pushforwards compose: $(\kappa_2\kappa_1)_*=\kappa_{2*}\circ \kappa_{1*}$. Hence for $f\in C_c^\infty(T_1)$,
$$
V(h_2)\bigl(V(h_1)[f]\bigr)
=V(h_2)\bigl[(\kappa_{1*}f)_{\mathrm{ext},T_2}\bigr]
=\bigl[(\kappa_{2*}(\kappa_{1*}f)_{\mathrm{ext},T_2})_{\mathrm{ext},T_3}\bigr].
$$
But $(\kappa_{1*}f)_{\mathrm{ext},T_2}$ agrees with $\kappa_{1*}f$ on $\kappa_1(T_1)$ and is $0$ on $T_2\setminus \kappa_1(T_1)$, hence $\kappa_{2*}(\kappa_{1*}f)_{\mathrm{ext},T_2}$ agrees with $(\kappa_2\kappa_1)_*f$ on $\kappa_2\kappa_1(T_1)$ and is $0$ off $\kappa_2\kappa_1(T_1)$ inside $\kappa_2(T_2)$. Since $\kappa_2\kappa_1(T_1)\subset T_3$, extending by zero to $T_3$ yields
$$
\bigl[(\kappa_{2*}(\kappa_{1*}f)_{\mathrm{ext},T_2})_{\mathrm{ext},T_3}\bigr]
=\bigl[((\kappa_2\kappa_1)_*f)_{\mathrm{ext},T_3}\bigr]
=V(h_2\circ h_1)[f],
$$
so the equality holds on representatives and therefore on $V(T_1)$.
\end{proof}

For each diamond $T$, let $W(T)$ be the Weyl $C^*$-algebra generated by unitaries
$W_T(v)$ for $v\in V(T)$, subject to the CCR relations
$$
W_T(v)\,W_T(w)\;=\;e^{-\frac{i}{2}\sigma_T(v,w)}\,W_T(v+w),
\qquad
W_T(v)^*\;=\;W_T(-v).
$$
By Lemma~\ref{lem:symplecticity}, every horizontal map $V(h):V(T)\to V(T')$ induces a
$*$-homomorphism $\alpha_h:W(T)\to W(T')$ uniquely determined on generators by
$$
\alpha_h\bigl(W_T(v)\bigr)\;=\;W_{T'}\bigl(V(h)v\bigr).
$$
Likewise, each vertical inclusion $i:T\hookrightarrow T'$ induces $\alpha_i:W(T)\to W(T')$
via $V(i)$.

Let $W(M)$ denote the Weyl algebra of the symplectic space
$V(M)=C_c^\infty(M)/P_M C_c^\infty(M)$ with form $\sigma_M$.
For each $T$, extension-by-zero defines a linear map $V(T)\to V(M)$, hence a canonical
$*$-homomorphism
$$
\iota_T:W(T)\longrightarrow W(M).
$$
Set
$$
W_{\mathrm{ql}}\;:=\;\varinjlim_{\,T\in R(M)} W(T),
$$
and write $\jmath_T:W(T)\to W_{\mathrm{ql}}$ for the canonical structure maps to the inductive limit. Let $\pi_{\mathrm{univ}}:W_{\mathrm{ql}}\to B(H_{\mathrm{univ}})$ be the universal representation. Define, for each $T$,
$$
A(T)\;:=\;\pi_{\mathrm{univ}}\!\bigl(\jmath_T(W(T))\bigr)''\;\subset\;B(H_{\mathrm{univ}}).
$$
Since $\pi_{\mathrm{univ}}$ is faithful, the structure maps $W(T)\to W(T')$ in the inductive system
are realized as injective maps after applying $\pi_{\mathrm{univ}}(\,\cdot\,)''$, and for $T\subset T'$ we have the literal inclusion $A(T)\subset A(T')$ inside $B(H_{\mathrm{univ}})$.

Now, we define what $\mathcal{A}:\textbf{Emb}(M)\to \textbf{vN}_{\mathrm{vert}}$ is, and verify the Haag--Kastler axioms for them.

We define an AQFT input $A:\mathbf{Emb}(M)\to \mathbf{vN}_{\mathrm{vert}}$ as follows.
\begin{itemize}
\item Objects.
For each diamond $T\subset M$ set $A(T)$ as in the preceding construction, and include the ambient object
$M\mapsto A(M)$.

\item Vertical arrows.
For a vertical inclusion $i:T\hookrightarrow T'$, define $A(i):A(T)\to A(T')$ to be the normal inclusion
$A(T)\subset A(T')$.

\item Horizontal arrows.
Let $h:T\to T'$ be a horizontal arrow, given by the restriction $h=\kappa|_T$ of a proper orthochronous
Poincar\'e isometry $\kappa$ with $\kappa(T)\subset T'$. Let $$\alpha_h:W(T)\to W(T')$$ be the induced
$*$-homomorphism on Weyl algebras. Define $A(h):A(T)\to A(T')$ to be the unique normal $*$-homomorphism such that, for every
$v\in V(T)$,
$$
A(h)\Bigl(\pi_{\mathrm{univ}}\bigl(\jmath_T(W_T(v))\bigr)\Bigr)
\;:=\;
\pi_{\mathrm{univ}}\bigl(\jmath_{T'}(\alpha_h(W_T(v)))\bigr),
$$
and extend by normality to $A(T)=\pi_{\mathrm{univ}}(\jmath_T(W(T)))''$. This is well-defined since
$\alpha_h$ is a $*$-homomorphism and $\pi_{\mathrm{univ}}(\jmath_{T'}(W(T')))$ generates $A(T')$ as a
bicommutant.
\end{itemize}

The Haag--Kastler axioms for the extracted net are verified as follows:
\begin{itemize}
\item \textbf{HK1 (Isotony).}
For $i:T\hookrightarrow T'$ the map $A(i)$ is the literal inclusion $A(T)\subset A(T')$ by construction
(using the faithful universal representation).

\item \textbf{HK2 (Locality, via the ambient region $M$).}
Let $T_1,T_2$ be spacelike separated diamonds. For $f\in C_c^\infty(T_1)$ and $g\in C_c^\infty(T_2)$ set
$v_1:=[f_{\mathrm{ext},M}]$ and $v_2:=[g_{\mathrm{ext},M}]$ in $V(M)$. Then
$$\sigma_M(v_1,v_2)=\int_M f_{\mathrm{ext},M}\,E_M(g_{\mathrm{ext},M})\,d\mathrm{vol}_M=0$$ because
$$\mathrm{supp}\bigl(E_M(g_{\mathrm{ext},M})\bigr)\subset J(\mathrm{supp}(g))$$ is disjoint from
$\mathrm{supp}(f)$ under spacelike separation \cite{baer2008waveequationslorentzianmanifolds}. Hence the Weyl relations give
$$W_M(v_1)W_M(v_2)=W_M(v_2)W_M(v_1)$$ in $W(M)$.
Via the maps $\jmath_M:W(M)\to W_{\mathrm{ql}}$ and $\pi_{\mathrm{univ}}$, the same commutation holds inside $A(M)=\pi_{\mathrm{univ}}(\jmath_M(W(M)))''$. Since the inclusion $T_k\hookrightarrow M$ induces the $*$-homomorphism $\iota_{T_k}:W(T_k)\to W(M)$, the elements $W_M(v_k)$ are the images of $W_{T_k}([f])$ and $W_{T_k}([g])$ under $\iota_{T_k}$. Therefore the images of $W(T_1)$ and $W(T_2)$ commute in $W(M)$, hence their bicommutants commute in $A(M)$. This is exactly HK2 in the
formulation using orthogonality of inclusions into a common codomain.

\item \textbf{HK3 (Covariance as functoriality).}
The assignment $T\mapsto (V(T),\sigma_T)$ is functorial for inclusions and the chosen Poincar\'e
restrictions, and the Weyl functor sends symplectic maps to $*$-homomorphisms. Passing to
$\pi_{\mathrm{univ}}(\,\cdot\,)''$ yields a functor $A:\mathbf{Emb}(M)\to \mathbf{vN}_{\mathrm{vert}}$,
which is HK3 in the functorial sense used here.

\item \textbf{HK4 (Time-slice).}
In the double-cone region category with only inclusion verticals, we take the time-slice class to be
empty, $S=\varnothing$, so HK4 is vacuous in this setup.

\item \textbf{HK5 (Additivity).}
Let $T=\bigcup_{\lambda\in\Lambda}T_\lambda$ be an open cover by diamonds and let $f\in C_c^\infty(T)$.
Choose a finite subcover of $\mathrm{supp}(f)$ and a partition of unity $\{\chi_k\}$ subordinate to it,
so $f=\sum_k \chi_k f$ with each $\chi_k f\in C_c^\infty(T_{\lambda_k})$. Thus in $V(T)$,
$$[f]=\sum_k[\chi_k f]$$, and the Weyl relations express $W_T([f])$ as a phase times a product of the
generators $W_T([\chi_k f])$. Hence $W(T)$ is generated (as a $C^*$-algebra) by the images of the
subalgebras $W(T_{\lambda_k})$ under the inclusion maps. Applying $\pi_{\mathrm{univ}}(\,\cdot\,)''$
gives
$$
A(T)\;=\;\bigvee_{\lambda\in\Lambda} A(i_{T_\lambda,T})\bigl(A(T_\lambda)\bigr)
$$
inside $A(T)$, i.e.\ HK5.
\end{itemize}

Now, we check square-level bimodularity in $\mathcal{F}_{\mathcal{A}}.$

Let 
$$
\begin{tikzcd}
	T & V \\
	{T'} & {V'}
	\arrow["h", from=1-1, to=1-2]
	\arrow["i"', from=1-1, to=2-1]
	\arrow["j", from=1-2, to=2-2]
	\arrow["{h'}"', from=2-1, to=2-2]
\end{tikzcd}
$$
be a square in $\mathbf{Mink}(M)$, so $j\circ h = h'\circ i$ as maps. Then, in the input, we have an
equality of normal $*$-homomorphisms
$$
A(j)\circ A(h) \;=\; A(h')\circ A(i).
$$

In the AQFT double functor $\mathcal{F}_\mathcal{A}$, the horizontal arrow $h:T\to V$ is sent to the correspondence
${}_{A(T)}L^2(A(V))_{A(V)}$, with left action $\lambda_V\circ A(h)$ and right action $\rho_V$ (standard form).
The square is sent to the standard-form map
$$
L^2(A(j)):\;L^2(A(V))\longrightarrow L^2(A(V')).
$$

Left bimodularity.
For $a\in A(T)$ and $\xi\in L^2(A(V))$,
\begin{align*}
L^2(A(j))\bigl(\lambda_V(A(h)(a))\,\xi\bigr)
&=\lambda_{V'}\bigl(A(j)(A(h)(a))\bigr)\,L^2(A(j))(\xi)\\
&=\lambda_{V'}\bigl(A(h')(A(i)(a))\bigr)\,L^2(A(j))(\xi),
\end{align*}
where the first equality is the standard-form intertwining identity for $L^2(-)$, and the second uses
$A(j)\circ A(h)=A(h')\circ A(i)$.

Right bimodularity.
For $b\in A(V)$,
$$
L^2(A(j))\bigl(\xi\,\rho_V(b)\bigr)
\;=\;
L^2(A(j))(\xi)\,\rho_{V'}\bigl(A(j)(b)\bigr),
$$
again by the standard-form intertwining identity.

These are exactly the bimodule-intertwiner identities required for the square assignment of $\mathcal{F}_\mathcal{A}$.

\textbf{Fixed-point subnets under finite groups.}
Let $(\mathcal{B},U,\Omega)$ be a strongly additive conformal net on $S^{1}$. Let $G$ be a finite group
acting by vacuum-preserving internal symmetries, i.e. for each $g\in G$ an automorphism
$
\beta_{g}\in \mathrm{Aut}\bigl(\mathcal{B}(S^{1})\bigr)
$
such that, for every arc $I\subset S^{1}$,
$$
\beta_{g}\bigl(\mathcal{B}(I)\bigr)=\mathcal{B}(I).
$$
Assume moreover that each $\beta_g$ is implemented on $\mathcal{H}$ by a unitary $V_g\in U(\mathcal{H})$
with $V_g\,\Omega=\Omega$ and
$$
\beta_g=\mathrm{Ad}_{V_g}\quad\text{on }\mathcal{B}(S^1)
$$
(and hence on each $\mathcal{B}(I)\subset B(\mathcal{H})$).%
\footnote{Equivalently, each $\beta_g$ restricts to an automorphism of every local algebra $\mathcal{B}(I)\subset B(\mathcal{H})$, and the implementing unitary fixes the vacuum vector.}
Assume moreover the following commutation hypothesis (used for \textbf{HK3}): for all
$F\in \mathrm{Diff}_{+}(S^{1})$ and $g\in G$,
$$
\beta_{g}\circ \mathrm{Ad}_{U(F)} \;=\; \mathrm{Ad}_{U(F)}\circ \beta_{g}.
$$
Equivalently, geometric covariance and the $G$-action commute on each local algebra.

Define the fixed-point subnet by
$$
\mathcal{A}^{G}(I)\;:=\;\mathcal{B}(I)^{G}
\;=\;
\{\,x\in \mathcal{B}(I)\mid \beta_{g}(x)=x\ \text{for all }g\in G\,\},
$$
and set
$$
\mathcal{A}^{G}(S^{1})\;:=\;\mathcal{B}(S^{1})^{G}.
$$

For a vertical inclusion $i:I\hookrightarrow I'$, define $\mathcal{A}^{G}(i)$ to be the (literal) inclusion
$$
\mathcal{B}(I)^{G}\hookrightarrow \mathcal{B}(I')^{G}.
$$

For a horizontal arrow $h=(F,I):I\to F(I)$, define
$$
\mathcal{A}^{G}(h)\;:=\;\mathrm{Ad}_{U(F)}\big|_{\mathcal{B}(I)^{G}}
:\mathcal{B}(I)^{G}\longrightarrow \mathcal{B}(F(I))^{G}.
$$

This is well-defined because $\mathrm{Ad}_{U(F)}$ commutes with the $G$-action: if $x\in \mathcal{B}(I)^G$, then for each $g\in G$,
$$
\beta_g\bigl(\mathrm{Ad}_{U(F)}(x)\bigr)=\mathrm{Ad}_{U(F)}\bigl(\beta_g(x)\bigr)=\mathrm{Ad}_{U(F)}(x),
$$
so $\mathrm{Ad}_{U(F)}(x)\in \mathcal{B}(F(I))^G$.

We now show the Haag-Kastler axioms.

\begin{itemize}
\item \textbf{HK1 (isotony) and HK2 (locality).}
Both are inherited from $\mathcal{B}$: for each arc $I$ we have $\mathcal{A}^{G}(I)=\mathcal{B}(I)^{G}\subseteq \mathcal{B}(I)$, and the inclusion maps are literal inclusions. Thus isotony holds. If $I$ and $J$ are disjoint arcs, then $[\mathcal{B}(I),\mathcal{B}(J)]=0$ by locality of $\mathcal{B}$, hence also $[\mathcal{A}^{G}(I),\mathcal{A}^{G}(J)]=0$.

\item \textbf{HK3 (functorial covariance).}
For a horizontal arrow $h=(F,I)$ we defined $\mathcal{A}^{G}(h)=\mathrm{Ad}_{U(F)}|_{\mathcal{B}(I)^{G}}$.
The commutation hypothesis $\beta_{g}\circ \mathrm{Ad}_{U(F)}=\mathrm{Ad}_{U(F)}\circ \beta_{g}$ implies that
$\mathrm{Ad}_{U(F)}$ preserves fixed points, so $\mathcal{A}^{G}(h)$ lands in $\mathcal{B}(F(I))^{G}$.
Compatibility with composition is the same calculation as in the untwisted net: if $h_{1}=(F_{1},I_{1}):I_{1}\to F_{1}(I_{1})$ and $h_{2}=(F_{2},F_{1}(I_{1})):F_{1}(I_{1})\to F_{2}F_{1}(I_{1})$, then
$$
\mathcal{A}^{G}(h_{2})\circ \mathcal{A}^{G}(h_{1})
=\mathrm{Ad}_{U(F_{2})}\circ \mathrm{Ad}_{U(F_{1})}
=\mathrm{Ad}_{U(F_{2}F_{1})}
=\mathcal{A}^{G}(F_{2}F_{1},I_{1}),
$$
and vertical functoriality is tautological for inclusions.

\item \textbf{HK4 (time-slice).}
As in the circle setup, we take the time-slice class to be empty in this example, so \textbf{HK4} is vacuous.

\item \textbf{HK5 (strong additivity; and additivity under inner continuity).}
By Xu \cite{xu2003strongadditivityconformalnets}, if $\mathcal{B}$ is strongly additive and a compact
group $G$ acts by vacuum-preserving internal symmetries, then the fixed-point subnet $\mathcal{B}^{G}$
is strongly additive. Since finite groups are compact, $\mathcal{A}^{G}$ is strongly additive.

If, in addition, one assumes the standard inner continuity property for a conformal net $N$,
namely that for every arc $I$,
$$
N(I)=\bigvee_{J\Subset I} N(J),
\qquad (J\Subset I \text{ meaning } \overline{J}\subset I),
$$
then strong additivity implies the usual open-cover additivity:

\begin{lemma}{Strong additivity + inner continuity $\implies$ additivity for open covers}
Let $N$ be a conformal net on $S^{1}$ that is strongly additive and inner continuous. Then for every arc
$I$ and every open cover $I=\bigcup_{\lambda} I_{\lambda}$ by arcs,
$$
N(I)=\bigvee_{\lambda} N(I_{\lambda}).
$$
\end{lemma}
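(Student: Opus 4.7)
The direction $\bigvee_\lambda N(I_\lambda) \subseteq N(I)$ is immediate from isotony applied to each inclusion $I_\lambda \hookrightarrow I$. The substantive content is the reverse inclusion. The plan is to peel off inner continuity to reduce to arcs compactly contained in $I$, then use compactness to pass to a finite subcover, and finally iterate strong additivity along a sufficiently fine partition.

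More concretely, by inner continuity $N(I)=\bigvee_{J\Subset I}N(J)$, so it suffices to prove $N(J)\subseteq \bigvee_\lambda N(I_\lambda)$ for each arc $J$ with $\overline{J}\subset I$. Since $\overline{J}$ is compact and $\{I_\lambda\}$ is an open cover, extract a finite subcover $I_{\lambda_1},\ldots,I_{\lambda_n}$. A Lebesgue-number argument applied to this finite cover of the compact metric space $\overline{J}$ produces $\delta>0$ such that every closed subarc of $\overline{J}$ of length $\leq \delta$ lies in some $I_{\lambda_k}$. Choose an ordered partition $p_0<p_1<\cdots<p_m$ of $\overline{J}$ with consecutive spacing $<\delta$; then each closed subarc $[p_i,p_{i+1}]$ sits inside some $I_{\lambda_{k(i)}}$.

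Now apply strong additivity iteratively. Removing the interior point $p_1\in J$ splits $J$ into two open subarcs and yields $N(J)=N((p_0,p_1))\vee N((p_1,p_m))$. The piece $(p_1,p_m)$ is itself an open arc containing the interior points $p_2,\ldots,p_{m-1}$, so strong additivity applies again at $p_2$, and so on; after $m-1$ applications one obtains
$$
N(J)=\bigvee_{i=0}^{m-1} N\bigl((p_i,p_{i+1})\bigr).
$$
By isotony each $N((p_i,p_{i+1}))\subseteq N(I_{\lambda_{k(i)}})\subseteq \bigvee_\lambda N(I_\lambda)$, giving $N(J)\subseteq \bigvee_\lambda N(I_\lambda)$, and taking the join over all $J\Subset I$ via inner continuity closes the argument.

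The main obstacle is bookkeeping rather than conceptual: strong additivity is stated as a single-point operation, so the iterative application must be phrased so that each intermediate piece remains a genuine (connected, nondense) arc on which strong additivity is again applicable. This is automatic for the successive right-halves $(p_i,p_m)$ produced above, but would fail if one tried to strip points symmetrically from both ends. A secondary subtlety is that on $S^1$ one must either fix an orientation-compatible parametrization of $\overline{J}$ so that ``length'' and ``partition'' have their familiar meaning, or phrase the Lebesgue-number step directly in terms of the intrinsic arc metric on $S^1$; both routes give the same partition and the argument is unaffected.
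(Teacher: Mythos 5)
Your proof is correct, and it reaches the conclusion by a genuinely different decomposition than the paper uses. The paper's proof first establishes a two-arc lemma (if $J,L$ are overlapping arcs with $J\cup L$ an arc, then $N(J\cup L)=N(J)\vee N(L)$, via one application of strong additivity at a point of the overlap), then extends to a finite chain of arcs with consecutive overlaps, and finally handles the general $J\Subset I$ by extracting such a chain from a finite subcover. Your route instead produces a \emph{disjoint} partition $p_0<\cdots<p_m$ of $\overline J$ via a Lebesgue-number argument and then strips off one partition point at a time with strong additivity, arriving directly at $N(J)=\bigvee_i N\bigl((p_i,p_{i+1})\bigr)$ with each piece inside a single cover element. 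The two organizations perform comparable work, but yours avoids the chain-extraction bookkeeping and the intermediate overlapping-pair lemma entirely, which makes it somewhat more streamlined; the paper's two-arc lemma, on the other hand, is a clean standalone consequence of strong additivity that is reusable elsewhere. Both arguments agree on the outer skeleton (inner continuity to reduce to $J\Subset I$, compactness of $\overline J$ for a finite subcover, iterated strong additivity, then isotony for the reverse inclusion), and your concluding remark about iterating only from one side so that each intermediate piece $(p_i,p_m)$ stays an arc correctly identifies the one place where the iteration could otherwise be phrased badly.
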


\begin{proof}
Let $J,L\subset S^{1}$ be arcs with $J\cap L\neq\varnothing$ and $J\cup L$ an arc. Choose an
orientation-preserving parametrization $\varphi:J\cup L\to (a,d)\subset\mathbb{R}$ and write
$\varphi(J)=(a,b)$ and $\varphi(L)=(c,d)$ with $a<c<b<d$. Pick $p\in(c,b)$ and set
$I_{0}:=J\cup L$, $I_{1}:=\varphi^{-1}((a,p))$, and $I_{2}:=\varphi^{-1}((p,d))$.
Then $I_{0}\setminus\{\varphi^{-1}(p)\}=I_{1}\cup I_{2}$ is the decomposition into the two connected
components, so strong additivity gives $N(I_{0})=N(I_{1})\vee N(I_{2})$. Since $I_{1}\subset J$ and
$I_{2}\subset L$, isotony yields $N(I_{1})\subset N(J)$ and $N(I_{2})\subset N(L)$, hence
$N(I_{0})\subset N(J)\vee N(L)$. The reverse inclusion $N(J)\vee N(L)\subset N(I_{0})$ is isotony. This
proves $N(J\cup L)=N(J)\vee N(L)$.

Let $J_{1},\dots,J_{n}$ be arcs such that $J:=\bigcup_{k=1}^{n}J_{k}$ is an arc. Choose an
orientation-preserving parametrization $\psi:J\to (u,v)\subset\mathbb{R}$ and write
$\psi(J_{k})=(\ell_{k},r_{k})$. After reindexing, assume $\ell_{1}\le\cdots\le\ell_{n}$ and define
$J^{(1)}:=J_{1}$ and $J^{(m+1)}:=J^{(m)}\cup J_{m+1}$ for $1\le m\le n-1$. Since $\bigcup_{k=1}^{n}
(\ell_{k},r_{k})$ is an interval, one has $J^{(m)}\cap J_{m+1}\neq\varnothing$ for each $m$; otherwise
there would be a nonempty gap and the union would be disconnected. Hence each $J^{(m+1)}$ is an arc and
the two-arc case yields
$$
N(J^{(m+1)})=N(J^{(m)})\vee N(J_{m+1})\qquad(1\le m\le n-1).
$$
Iterating gives $N(J)=\bigvee_{k=1}^{n}N(J_{k})$.

Let $I$ be an arc and let $\{I_{\lambda}\}_{\lambda}$ be an open cover of $I$. Fix $J\Subset I$.
Compactness of $\overline{J}$ yields a finite subcover
$\overline{J}\subset \bigcup_{k=1}^{n} I_{\lambda_{k}}$. Choose an orientation-preserving
parametrization $\theta:J\to (s,t)\subset\mathbb{R}$. Then $\{\theta(I_{\lambda_{k}}\cap J)\}_{k=1}^{n}$
is a finite open cover of the compact interval $\overline{\theta(J)}=[s',t']$. From this finite cover one
can extract a finite subfamily $K_{1},\dots,K_{m}$ which still covers $[s',t']$ and satisfies
$K_{j}\cap K_{j+1}\neq\varnothing$ for all $j$. Pulling back along $\theta^{-1}$ gives arcs
$J_{1},\dots,J_{m}\subset J$ such that $\bigcup_{j=1}^{m}J_{j}=J$, each $J_{j}\subset I_{\lambda_{k(j)}}$
for some $k(j)$, and $J_{j}\cap J_{j+1}\neq\varnothing$. Therefore
$$
N(J)=\bigvee_{j=1}^{m} N(J_{j})\subset \bigvee_{k=1}^{n} N(I_{\lambda_{k}})\subset \bigvee_{\lambda} N(I_{\lambda}).
$$
Using inner continuity,
$$
N(I)=\bigvee_{J\Subset I} N(J)\subset \bigvee_{\lambda} N(I_{\lambda}).
$$
The reverse inclusion is isotony, hence $N(I)=\bigvee_{\lambda} N(I_{\lambda})$.
\end{proof}

If inner continuity is among the standing net assumptions, this lemma applies to $N=\mathcal{A}^{G}$ and
yields the open-cover additivity form of \textbf{HK5}. Without inner continuity, one writes \textbf{HK5}
for this example in the strong additivity form furnished by Xu.
\end{itemize}

\section{Conclusion and Preview of Part II}

\textbf{Summary.} Part I packages an AQFT input $\mathcal A$ as a pseudo double functor
$$
\mathcal F_{\mathcal A}\colon \mathbf{Mink}(M)\longrightarrow \mathbf{vNA},
$$
whose vertical component is the usual Haag-Kastler net $U\mapsto \mathcal A(U)$, while the horizontal and square-level assignments are designed so that the correspondence calculus is functorial rather than ad hoc. Concretely, the target pseudo double category $\mathbf{vNA}$ is chosen so that its vertical morphisms form a specified class $\mathbf{vN}_{\mathrm{vert}}$ of normal unital $*$-homomorphisms, its horizontal $1$-cells are von Neumann correspondences (bimodules), and its squares are bimodular intertwiners.

The key operator-algebraic input is the standard-form identity correspondence. For each von Neumann algebra $A$ we fix a Haagerup standard form $(H_A,\lambda_A,J_A,P_A)$, and define
$$
L^2(A)\coloneqq H_A
$$
as an $A$-$A$ correspondence, with left action given by $\lambda_A\colon A\to B(H_A)$ and right action
$$
\rho_{L^2(A)}(a)\coloneqq J_A\,\lambda_A(a^*)\,J_A,\quad a\in A.
$$

Two nonnegotiables from Part I are worth emphasizing, since they are exactly what makes the construction robust enough to support Part II.

First, the class $\mathbf{vN}_{\mathrm{vert}}$ of vertical morphisms is defined by a functoriality requirement: it consists of those normal unital $*$-homomorphisms $\varphi\colon A\to B$ for which the operator-algebraic framework provides a canonical standard-form map
$$
L^2(\varphi)\colon L^2(A)\longrightarrow L^2(B),
$$
functorial in $\varphi$ and compatible with the correspondence calculus (in particular with Connes fusion and the associated bimodule structures).
In this way, functoriality of $L^2$ is recorded as an explicit hypothesis on $\mathbf{vN}_{\mathrm{vert}}$, rather than being treated as an implicit assumption.

Second, the square assignment is typed in a way that genuinely uses spacetime commutativity: a commuting boundary
$$
j\circ h = h'\circ i
$$
in $\mathbf{Mink}(M)$ yields a square in $\mathbf{vNA}$ whose interior is the standard-form map
$$
L^2\!\bigl(\mathcal A(j)\bigr)\colon L^2\!\bigl(\mathcal A(\mathrm{dom}(j))\bigr)\longrightarrow L^2\!\bigl(\mathcal A(\mathrm{cod}(j))\bigr),
$$
and the induced commutativity
$$
\mathcal A(j)\circ \mathcal A(h)\;=\;\mathcal A(h')\circ \mathcal A(i)
$$
is exactly what guarantees that $L^2(\mathcal A(j))$ is bimodular for the restricted left and right actions arising from the horizontal assignments.
This is precisely how $\mathcal F_{\mathcal A}$ says more than the extracted net, while remaining true to standard operator-algebraic identities.

At this point it is reasonable to ask whether the double-categorical packaging is merely decorative, in the sense that it reformulates the net without producing new intrinsic consequences at the level of Haag-Kastler axioms.

Part II shifts the emphasis: rather than treating $\mathcal F_{\mathcal A}$ as a device that should, by itself, impose additional net-theoretic constraints, we use $\mathcal F_{\mathcal A}$ as a structured interface with the internal theory of von Neumann algebras.

\textbf{Preview of Part II.} For type I and type II von Neumann algebras, many standard structural operations are implemented by bimodules and bimodular maps; the square-level data in $\mathcal F_{\mathcal A}$ is precisely what records these implementations functorially.

Part II takes inspiration from Penneys' tracial $2$-categorical framework, and the first point to keep explicit is trace-dependence \cite{PenneysINI2017}.
In Part I, $L^2(A)$ is defined from a chosen Haagerup standard form and does not require any trace.

By contrast, in Penneys' setting the basic objects are pairs $(A,\mathrm{tr}_A)$, and the standard representation is the GNS Hilbert space $L^2(A,\mathrm{tr}_A)$ obtained by completing $A$ with inner product
$$
\langle a,b\rangle := \mathrm{tr}_A(b^*a).
$$
The right action and the modular conjugation $J$ are defined using traciality, and changing the trace changes the object.

Part II therefore aims to treat tracial AQFT as a refinement of the Part I target, not a change of notation.
When we work in Penneys' tracial setting \cite{PenneysINI2017}, objects are pairs $(A,\tau)$ and
$$
L^2 \hspace{0.2cm}\text{means}\hspace{0.2cm} L^2(A,\tau).
$$

When we introduce a vertical direction, we restrict the vertical $*$-homomorphisms to those that respect the chosen trace (and, in the semifinite case, the chosen tracial weight), in the same way that Part I restricts to $\mathbf{vN}_{\mathrm{vert}}$ to ensure functorial maps $L^2(\varphi)$.

This is the only point where the two frameworks can be confused, and we build it into the target to avoid ambiguity.

More precisely, we do not take Penneys' $2$-category as an additional ambient structure for this paper, nor do we rely on any external $W^*$-$2$-categorical formalism as input.
Rather, \cite{PenneysINI2017} serves as a guide to which trace-sensitive mechanisms should be internalized into our own double-categorical framework, and to how these mechanisms are best organized once traces are present.

Accordingly, Part II constructs a tracially decorated refinement of the Part I codomain, denoted
$$
\mathbf{vNA}_{\tau},
$$
whose objects are pairs $(A,\tau)$ where $\tau$ is a fixed faithful normal trace (in the finite case) or a fixed faithful normal semifinite tracial weight (in the semifinite case), whose vertical morphisms are normal unital $*$-homomorphisms compatible with the chosen tracial data, whose horizontal $1$-cells are correspondences formulated in this tracial/semifinite setting, and whose squares are bimodular intertwiners.
There is a forgetful pseudo double functor
$$
\mathbf{vNA}_{\tau}\longrightarrow \mathbf{vNA}
$$
which discards the tracial decoration and recovers the Part I correspondence calculus.

The point of introducing $\mathbf{vNA}_{\tau}$ is not to impose new axioms on AQFT nets, but to make standard type I/type II operations expressible at the same level of functoriality already present in $\mathcal F_{\mathcal A}$.
In particular, Part II builds into $\mathbf{vNA}_{\tau}$ the basic corner and amplification operations that are ubiquitous in the structure theory of type II factors, and organizes them as canonical constructions compatible with whiskering and pasting of squares.

Under the standing trace-compatibility hypotheses, a tracial AQFT input is then an AQFT input $\mathcal A$ together with a choice of tracial data $\tau_U$ on each local algebra $\mathcal A(U)$ such that the morphisms associated to admissible embeddings preserve the chosen tracial data (in the finite case) or preserve the chosen tracial weight (in the semifinite case).
From such input, Part II constructs a refined pseudo double functor
$$
\mathcal F_{(\mathcal A,\tau)}\colon \mathbf{Mink}(M)\longrightarrow \mathbf{vNA}_{\tau}
$$
whose composite with the forgetful functor $\mathbf{vNA}_{\tau}\to \mathbf{vNA}$ is canonically isomorphic to the original $\mathcal F_{\mathcal A}$.
Thus the extracted net remains $U\mapsto \mathcal A(U)$, while the horizontal and square-level assignments land in a codomain where trace-sensitive constructions are functorial by design.

A guiding example of the kind of mechanism we internalize is Roberts' $2\times 2$ trick, which identifies spaces of bimodule maps as corners in endomorphism algebras of direct sums and thereby makes corner manipulations compatible with composition \cite{PenneysINI2017}.
In the present framework, the role of this principle is to motivate and justify the inclusion of direct-sum and corner operations in the square-level calculus of $\mathbf{vNA}_{\tau}$, so that standard amplification and corner arguments can be expressed in terms of whiskering and pasting rather than as external algebraic maneuvers.

Finally, Part II uses this refined codomain to formulate, and to test in representative example classes, how familiar type II themes can be expressed in the functorial AQFT language.
The emphasis is on a technically correct dictionary and on functorial constructions, rather than on asserting classification results from the Haag-Kastler axioms alone: properties such as McDuff-type stability, Cartan-type structure, primeness, and the $\mathrm{II}_1/\mathrm{II}_{\infty}$ semifinite contrast are treated as trace-sensitive conditions on local algebras and their correspondence calculus, and Part II develops the formalism needed to state and transport these conditions coherently along the spacetime structure encoded in $\mathbf{Mink}(M)$.

\printbibliography

\end{document}